\newcommand\supp{\mathrm{supp}}
\newtheorem{theoreme}{Theorem}[section] %
\newtheorem{proposition}[theoreme]{Proposition} %
\newtheorem{corollary}[theoreme]{Corollary} %
\newtheorem{lemme}[theoreme]{Lemma} %
\newtheorem{definition}{Definition}[section] %
\newtheorem{remark}[theoreme]{Remark} %
\newtheorem{remarque}[theoreme]{Remark} %
\newcommand\mk{\medskip}
\newcommand\sk{\smallskip}
\newcommand\N {\mathbb{N}}
\newcommand\R{\mathbb{R}} 
\newcommand\zu{[0,1]} 
\newcommand\dimm{\underline{\dim}_H}
\renewcommand\widering[1]{\ring{#1}}
\newcommand\ep{\varepsilon}
\begin{document}
\title{An heterogeneous ubiquity theorem, application to self-similar measures}
\maketitle
\author{Edouard Daviaud,
 Université Paris-Est, LAMA (UMR 8050) UPEMLV, UPEC, CNRS, F-94010, Créteil, France}
\section{Introduction}
Estimating the Hausdorff dimension of limsup sets obtained from the contractions of the elements of a given family of sets is a natural question of metric approximation theory, which  arises in many contexts. In this article, given a sequence  of balls of $\mathbb{R}^d$, $\mathcal{B}= (B(x_n,r_n))_{n\in\mathbb{N}}$, we investigate   in a very general frame the size properties of the limsup sets obtained from smaller  sets, i.e sets of the form $\limsup_{n\to +\infty} U_n $, where $U_n \subset B_n$.

% a family of  $(x_n) _{ \in\mathbb{N}}$ at a given rate $\delta $ relatively to some , for some $\delta>1$. Typically,

Let us recall that  the historical example of Jarnik-Besicovitch's theorem deals with the case    $U_n=B_n^\delta:= B(x_n,r_n^{\delta })$, where $\delta>1$, $x_n$ is a rational number $\frac{p}{q}$ and $r_n =\frac{1}{q^2}$. More generally in metric number theory, one often aims at computing the Hausdorff dimension of sets $\limsup_{n\to\infty} U_n$, where $(U_n)_{n\in\mathbb{N}}$ has some algebraic or dynamical meaning.  
%In  Diophantine approximation  estimating the size of the   sets of points approximable  by the ``rational'' balls  $(B(x_n,r_n^\delta))_{n\in \N} = (B(p/q,q^{-2\delta }))_{p\in \mathbb{Z}, q\in \N^*}$, for  every $\delta\geq 1$ among the the classical middle-third Cantor set is an open problem.
 %(other points than the rationals can be chosen of course). 
Generalizations of Jarnik-Besicovitch's Theorem often  consider a given  sequence $(x_n)_{n\in\mathbb{N}}$ of points in  $\R^d$, as well as a sequence of radii $(r_n)_{n\in\mathbb{N}}$  for which the associated limsup  set  $E_{1}=\limsup_{n\rightarrow+\infty}B_n $ has a controlled size (in terms of Lebesgue measure or Hausdorff dimension for instance);  then, given a sequence of sets $\mathcal{U}=(U_n)_{n\in\mathbb{N}}$ with, for every $n\in\mathbb{N}$, $U_n \subset B_n$, one estimates the Hausdorff dimension of the smaller limsup set  $E(\mathcal{U})=\limsup_{n\rightarrow+\infty}U_n$. The classical case is when the set $U_n$ is a shrunk ball $B_n^{\delta}$, for some $\delta>1$, that is $E(\mathcal{U})$ is  the limsup set of the $\delta$-contracted balls, but different shapes for $U_n$ have also been considered (rectangles or ellipsoïds rather than balls for instance). Such problems are studied for instance in \cite{Ja,BV,BS2,RE,KR} among many references. 

The same question arises on any topological dynamical system $(X,T)$ endowed with some metric, when the  sequence $(x_n)_{n\in\mathbb{N}}$ is the orbit $(T^{n}(x))_{n\in\mathbb N}$ of a well chosen point $x$. Some specific cases are for instance treated in  \cite{HV,LS,PR}. In probability theory, the famous Dvoretzky covering problem consists in computing, when it is possible, the Hausdorff dimension of the limsup set associated with a   sequence of random balls drawn independently and  uniformly in a compact Baire space, see for instance  \cite{FJS,EP,BS2}. {In analysis, the value of the pointwise regularity exponents of measures and functions at a given point $x$ often relies on the ability to understand how   $x$ is close to remarkable points $x_n$. The reader may refer to \cite{Ja,BS3,BS4}.

{As mentioned above, in the largest part of the literature, a strong     geometric measure theoretic condition  is initially  imposed on $\mathcal{B}$ to obtain results, for instance that the Lebesgue measure of $\limsup_{n\to +\infty} B(x_n,r_{  n})$ is full (cf \cite{BV}). But there are many situations in which the Lebesgue measure is not the relevant measure to work with (cf \cite{BS}).}

{Our purpose in this article is to obtain a general lower bound for the Hausdorff dimension $\limsup_{n\to +\infty} U_n$, where the sets $(U_n)$ are open sets in some balls $(B_n)$ satisfying the property  called $\mu$-asymptotically covering property, where $\mu$ is a probability measure on $\R^d$. This property, introduced in \cite{ED2},   is   proved to be almost equivalent to verifying  that $\mu(\limsup_{n\rightarrow+\infty}B_n)=1$ (e.g \cite{ED2}). 

The results presented here extend, for instance,  both the results of Koivusalo-Rams stated in \cite{KR} and the result of Barral-Seuret (\cite{BS2}) which deals with balls and self-similar measures under the open set condition. It is worth noticing that the work of Koivusalo and Rams in \cite{KR} highlighted the importance of the Hausdorff content to compute Hausdorff dimension of limsup sets and this article makes further use of this fact. 

An important advantage  of the lower bound obtained in the present paper is that its value is tractable in many cases. For instance, as a first application, a ubiquity theorem is given in the case where $\mu$ is a   self-similar measure (we do not require any condition on the possible overlaps  associated with such a $\mu$). 

Two other applications of our main result are treated in this article as well. The problem of self-similar shrinking targets is studied when the corresponding iterated function system (in short IFS) is dimension-regular and has similarity dimension less than $d$, meaning in particular that for every self-similar measure, the similarity dimension and the Hausdorff dimension coincide (see Section \ref{shrtag}, Definition \ref{dimreg}).

Another application in Diophantine approximation is given. Let $K_{1/3}^{(0)}$ the set of points of $[0,1]$ such that in their sequence of digits in basis $3$, the asymptotic frequency of appearance of the digit $1$ is  infinitely many often close to $0$ (note that this set contains the middle-third Cantor set $K_{ 1/3}$ and $\dim_H (K_{1/3}^{(0)})=\dim_H (K_{1/3})$). We compute the Hausdorff dimension of points of $K_{1/3}^{(0)}$ well approximable by rational (see Theorem \ref{main} for a precise statement).

%%%%%%%%%%%%%%%%%%%%%%%%%%
%%%%%%%%%%%%%%%%%%%%%%%%%%
%%%%%%%%%%%%%%%%%%%%%%%%%%
%%%%%%%%%%%%%%%%%%%%%%%%%%
%%%%%%%%%%%%%%%%%%%%%%%%%%
%%%%%%%%%%%%%%%%%%%%%%%%%%
%%%%%%%%%%%%%%%%%%%%%%%%%%
%%%%%%%%%%%%%%%%%%%%%%%%%%
%%%%%%%%%%%%%%%%%%%%%%%%%%
%%%%%%%%%%%%%%%%%%%%%%%%%%
\section{Definitions and main statements}\label{sec1}

Let us start with some notations 

 Let $d$ $\in\mathbb{N}$. For $x\in\mathbb{R}^{d}$, $r>0$,  $B(x,r)$ stands for the closed ball of ($\mathbb{R}^{d}$,$\vert\vert \ \ \vert\vert_{\infty}$) of center $x$ and radius $r$. 
 Given a ball $B$, $\vert B\vert$ stands for the diameter of $B$. For $t\geq 0$, $\delta\in\mathbb{R}$ and $B=B(x,r)$,   $t B$ stand for $B(x,t r)$, i.e. the ball with same center as $B$ and radius multiplied by $t$,   and the  $\delta$-contracted  ball $B^{\delta}$ is  defined by $B^{\delta}=B(x ,r^{\delta})$.
\smallskip

Given a set $E\subset \mathbb{R}^d$, $\widering{E}$ stands for the  interior of the $E$, $\overline{E}$ its  closure and $\partial E$ its boundary, i.e, $\partial E =\overline{E}\setminus \widering{E}$. If $E$ is a Borel subset of $\R^d$, its Borel $\sigma$-algebra is denoted by $\mathcal B(E)$.
\smallskip

Given a topological space $X$, the Borel $\sigma$-algebra of $X$ is denoted $\mathcal{B}(X)$ and the space of probability measure on $\mathcal{B}(X)$ is denoted $\mathcal{M}(X).$ 

\sk
 The $d$-dimensional Lebesgue measure on $(\mathbb R^d,\mathcal{B}(\mathbb{R}^d))$ is denoted by 
$\mathcal{L}^d$.
\smallskip

For $\mu \in\mathcal{M}(\R^d)$,   $\supp(\mu)=\left\{x\in[0,1]: \ \forall r>0, \ \mu(B(x,r))>0\right\}$ is the topological support of $\mu$.
\smallskip

 Given $E\subset \mathbb{R}^d$, $\dim_{H}(E)$ and $\dim_{P}(E)$ denote respectively  the Hausdorff   and the packing dimension of $E$.
\smallskip
 
Now we recall some definitions.

%%%%%%%%%%%%%%%%%%%%%%%%%%
%%%%%%%%%%%%%%%%%%%%%%%%%%
%%%%%%%%%%%%%%%%%%%%%%%%%%
%%%%%%%%%%%%%%%%%%%%%%%%%%
%%%%%%%%%%%%%%%%%%%%%%%%%%
%%%%%%%%%%%%%%%%%%%%%%%%%%
%\subsection{Recalls on Hausdorff measures, and Hausdorff dimensions of sets and measures} \

\begin{definition}
\label{hausgau}
Let $\zeta :\mathbb{R}^{+}\mapsto\mathbb{R}^+$. Suppose that $\zeta$ is increasing in a neighborhood of $0$ and $\zeta (0)=0$. The  Hausdorff outer measure at scale $t\in(0,+\infty]$ associated with $\zeta$ of a set $E$ is defined by 
\begin{equation}
\label{gaug}
\mathcal{H}^{\zeta}_t (E)=\inf \left\{\sum_{n\in\mathbb{N}}\zeta (\vert B_n\vert) : \, \vert B_n \vert \leq t, \ B_n \text{ closed ball and } E\subset \bigcup_{n\in \mathbb{N}}B_n\right\}.
\end{equation}
The Hausdorff measure associated with $\zeta$ of a set $E$ is defined by 
\begin{equation}
\mathcal{H}^{\zeta} (E)=\lim_{t\to 0^+}\mathcal{H}^{\zeta}_t (E).
\end{equation}
\end{definition}

For $t\in (0,+\infty]$, $s\geq 0$ and $\zeta:x\mapsto x^s$, one simply uses the usual notation $\mathcal{H}^{\zeta}_t (E)=\mathcal{H}^{s}_t (E)$ and $\mathcal{H}^{\zeta} (E)=\mathcal{H}^{s} (E)$, and these measures are called $s$-dimensional Hausdorff outer measure at scale $t\in(0,+\infty]$ and  $s$-dimensional Hausdorff measure respectively. Thus, 
\begin{equation}
\label{hcont}
\mathcal{H}^{s}_{t}(E)=\inf \left\{\sum_{n\in\mathbb{N}}\vert B_n\vert^s : \, \vert B_n \vert \leq t, \ B_n \text{  closed ball and } E\subset \bigcup_{n\in \mathbb{N}}B_n\right\}. 
\end{equation}
The quantity $\mathcal{H}^{s}_{\infty}(E)$ (obtained for $t=+\infty$) is called the $s$-dimensional Hausdorff content of the set $E$.

%%%%%%%%%%%%%%%%%%%%%%%%%%
\begin{definition} 
\label{dim}
Let $\mu\in\mathcal{M}(\mathbb{R}^d)$.  
For $x\in \supp(\mu)$, the lower and upper  local dimensions of $\mu$ at $x$ are  defined as
\begin{align*}
\underline\dim_{{\rm loc}}(\mu,x)=\liminf_{r\rightarrow 0^{+}}\frac{\log(\mu(B(x,r)))}{\log(r)}
  \ \mbox{ and } \    \overline\dim_{{\rm loc}}(\mu,x)=\limsup_{r\rightarrow 0^{+}}\frac{\log (\mu(B(x,r)))}{\log(r)}.
 \end{align*}
Then, the lower and upper Hausdorff dimensions of $\mu$  are respectively defined by 
\begin{equation}
\label{dimmu}
\dimm(\mu)={\mathrm{ess\,inf}}_{\mu}(\underline\dim_{{\rm loc}}(\mu,x))  \ \ \mbox{ and } \ \ \overline{\dim}_P (\mu)={\mathrm{ess\,sup}}_{\mu}(\overline\dim_{{\rm loc}}(\mu,x)).
\end{equation}

\end{definition}
%%%%%%%%%%%%%%%%%%%%%%%%%%

It is known (for more details see \cite{F}) that
\begin{equation*}
\begin{split}
\dimm(\mu)&=\inf\{\dim_{H}(E):\, E\in\mathcal{B}(\mathbb{R}^d),\, \mu(E)>0\} \\
\overline{\dim}_P (\mu)&=\inf\{\dim_P(E):\, E\in\mathcal{B}(\mathbb{R}^d),\, \mu(E)=1\}.
\end{split}
\end{equation*}
When $\underline \dim_H(\mu)=\overline \dim_P(\mu)$, this common value is simply denoted by $\dim(\mu)$ and~$\mu$ is said to be \textit{exact dimensional}.

\medskip

\subsection{The $\mu$-a.c property}

\mk

We fix a sequence of closed balls $\mathcal B=(B_n)_{n\in\mathbb N}$ such that    $\lim_{n\to +\infty} |B_n| = 0$ (otherwise the situation is trivial for the questions we consider).

\mk

The main  property (introduced in \cite{ED2}) used for the sequence of balls $\mathcal{B}$  is meant to ensure  that any set can be covered   efficiently  by the limsup of the $B_n$'s, with respect to a measure $\mu$.  This property  is  a general version of the key covering property used in the KGB Lemma of  Beresnevitch and Velani, stated in \cite{BV}, using a Borel probability measure~$\mu$. Observe that such properties (like the KGB Lemma) are   usually   key   (cf \cite{Ja,BV,BS} for instance) to prove ubiquity or mass transference results.

%%%%%%%%%%%%%%%%%%%%%%%%%%
 \begin{definition} 
\label{ac}
Let   $\mu\in \mathcal{M}(\R^d)$. The  sequence $\mathcal{B}= (B_n)_{n\in\mathbb{N}}$ of balls of $\R^d$   is said to be $\mu$-asymptotically covering (in short,  $\mu$-a.c) when  there exists  a constant $C>0$ such that for every open set $\Omega\subset \R^d $ and $g\in\mathbb{N}$, there is an integer  $N_\Omega \in\mathbb{N}$ as well  as $g\leq n_1 \leq ...\leq n_{N_\Omega}$ such that: 
\begin{itemize}
\item [(i)]$\forall \, 1\leq i\leq N_\Omega$, $B_{n_i}\subset \Omega$;
\item [(ii)]$\forall \, 1\leq i\neq j\leq N_\Omega$, $B_{n_i}\cap B_{n_j}=\emptyset$;
\item  [(iii)] also,
\begin{equation}
\label{majac}
\mu\Big(\bigcup_{i=1}^{N_\Omega}B_{n_i} \Big )\geq C\mu(\Omega).
\end{equation}
\end{itemize}
\end{definition}
%%%%%%%%%%%%%%%%%%%%%%%%%%

In other words, for any open set $\Omega$ and any integer $g\ge 1$, there exits a finite set of disjoint balls of $\left\{B_n\right\}_{n\geq g}$ supporting a fixed proportion of $\mu(\Omega)$.

This notion of $\mu$-asymptotically covering  is related to the way the balls of $\mathcal{B}$ are distributed according to the measure $\mu$. This property  is a priori slightly stronger  than having a  $\limsup$ of full $\mu$-measure when $\mu$ is not doubling,  as suggested by the following lemma proved in \cite{ED2}, and whose second item will be used to apply our main theorem to self-similar measures. However, it follows from the proof of  \cite[Lemma 5]{BV} that these properties are equivalent when $\mu$ is doubling.

%%%%%%%%%%%%%%%%%%%%%%%%%%
\begin{lemme} 
\label{equiac}
Let   $\mu\in\mathcal{M}(\R^d)$  and $\mathcal{B} =(B_n :=B(x_n ,r_n))_{n\in\mathbb{N}}$ be a sequence of balls of  $\R^d$ with $\lim_{n\to +\infty} r_{n}= 0$.
\begin{enumerate}
\smallskip
\item
If $\mathcal{B} $ is $\mu$-a.c, then $\mu(\limsup_{n\rightarrow+\infty}B_n)=1.$
\smallskip
\item
 If there exists $v<1$ such that $ \mu \big(\limsup_{n\rightarrow+\infty}(v B_n) \big)=1$, then $\mathcal{B} $ is $\mu$-a.c.
\end{enumerate}
\end{lemme}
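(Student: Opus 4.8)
The plan is to prove the two items separately, each by exhibiting (or exploiting) an explicit family of disjoint balls. For item (1), suppose $\mathcal{B}$ is $\mu$-a.c.\ and set $E=\limsup_{n\to+\infty}B_n$. Fix any $g\in\mathbb{N}$ and take $\Omega=\R^d$ in Definition \ref{ac}: we obtain disjoint balls $B_{n_1},\dots,B_{n_{N_\Omega}}$ with indices $\ge g$ whose union has $\mu$-measure $\ge C$. Since these indices are all $\ge g$, this union is contained in $\bigcup_{n\ge g}B_n$, hence $\mu\big(\bigcup_{n\ge g}B_n\big)\ge C$ for every $g$. The set $\bigcup_{n\ge g}B_n$ decreases as $g$ increases and its intersection over all $g$ is exactly $E$, so by continuity from above $\mu(E)=\lim_{g\to\infty}\mu\big(\bigcup_{n\ge g}B_n\big)\ge C>0$. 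To upgrade $\mu(E)>0$ to $\mu(E)=1$, I would apply the same argument relative to an arbitrary open set $\Omega$: item (iii) gives $\mu(E\cap\Omega)\ge\mu\big(\bigcup_{i}B_{n_i}\big)\ge C\mu(\Omega)$ (the balls lie in $\Omega$ and have arbitrarily large index, so they contribute to $E\cap\Omega$). A standard density/exhaustion argument — covering $E^c$ by a disjoint-enough countable family of small open balls, or invoking a Vitali-type covering — then forces $\mu(E^c)=0$: if $\mu(E^c)>0$ one finds an open $\Omega$ with $\mu(\Omega\cap E^c)$ arbitrarily close to $\mu(\Omega)$, contradicting $\mu(E^c\cap\Omega)\le(1-C)\mu(\Omega)$.

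For item (2), assume there is $v<1$ with $\mu\big(\limsup_{n}(vB_n)\big)=1$; write $E_v=\limsup_n(vB_n)$. I must produce, for a given open $\Omega$ and a given $g$, a finite disjoint subfamily of $\{B_n\}_{n\ge g}$ inside $\Omega$ carrying a fixed proportion of $\mu(\Omega)$. The key point is that $x\in E_v$ means $x\in vB_n$ for infinitely many $n$, and for such $n$ (once $r_n$ is small, using $r_n\to0$) the full ball $B_n$ is contained in any fixed open neighbourhood of $x$; so the subcollection $\{B_n : n\ge g,\ B_n\subset\Omega\}$ still has $\limsup$ of full $\mu$-measure in $\Omega$, that is, $\mu\big(\Omega\cap\limsup_{n\ge g,\,B_n\subset\Omega}(vB_n)\big)=\mu(\Omega)$. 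Now I extract a disjoint subfamily: order these balls and greedily select a maximal disjoint subcollection $B_{n_1},\dots,B_{n_N}$ (finite, after truncating the $\limsup$ to a large finite stage $M$ so that the first $M$ of them already cover a proportion $\ge 1/2$ of $\mu(\Omega)$ through their $v$-shrunk versions). The classical $5r$-covering (Vitali) argument shows that each discarded ball $vB_n$ meets some selected $B_{n_j}$ with $|B_n|\le|B_{n_j}|$, hence $vB_n\subset 3B_{n_j}$; therefore $\bigcup_j 3B_{n_j}$ covers the $v$-shrunk balls we kept, giving $\mu\big(\bigcup_j 3B_{n_j}\big)\ge\tfrac12\mu(\Omega)$. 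The only remaining gap between $\bigcup_j 3B_{n_j}$ and $\bigcup_j B_{n_j}$ is a dilation factor, and here one must be slightly careful since $\mu$ need not be doubling; this is resolved by instead shrinking the target first — replace $v$ by $v'$ with $v<v'<1$ and note $\limsup(vB_n)\subset\limsup(v'B_n)$, then run the Vitali selection on the $v'$-balls so that the covering dilates of the selected balls are contained in the \emph{original} $B_{n_j}$ (choosing the dilation constant $\le 1/v'$). This yields $\mu\big(\bigcup_j B_{n_j}\big)\ge C\mu(\Omega)$ with $C=1/2$ (or any fixed constant), which is exactly property (iii); (i) and (ii) hold by construction.

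The main obstacle, and the place where the hypothesis $\lim r_n=0$ and the shrinking factor $v<1$ are genuinely needed, is precisely the non-doubling issue in item (2): one cannot simply pass from a bounded-overlap cover by dilated balls to a comparable-measure cover by the balls themselves. The trick of building in the safety margin $v<v'<1$ from the outset — so that the Vitali covering dilates stay inside the true balls $B_{n_j}$ rather than outside them — is what makes the argument go through without any regularity assumption on $\mu$. I also need to be careful that the greedy/Vitali selection can be carried out within the set of indices $\ge g$ only; this is fine because discarding the finitely many indices below $g$ does not change $\limsup(v'B_n)$.
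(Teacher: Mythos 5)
Your item (1) is fine: applying Definition \ref{ac} inside an arbitrary open $\Omega$ with threshold $g$ gives $\mu\big(\Omega\cap\bigcup_{n\ge g}B_n\big)\ge C\mu(\Omega)$ for every $g$, continuity from above yields $\mu\big(\Omega\cap\limsup_{n\to+\infty}B_n\big)\ge C\mu(\Omega)$, and outer regularity of $\mu$ (choose $\Omega\supset E^c$ open with $\mu(\Omega)\le\mu(E^c)+\ep$, where $E=\limsup_n B_n$) forces $\mu(E^c)=0$. Note that the paper does not prove the lemma itself but refers to \cite{ED2}; still, this half is correct and is the expected argument.

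Item (2) has a genuine gap at the extraction step, and your proposed fix does not close it. A greedy/Vitali selection can give you either pairwise disjointness of the full balls $B_{n_j}$ or a lower bound on the measure of the union of the selected balls \emph{themselves}, but not both without a doubling hypothesis. Concretely, the Vitali step needs dilation factor $3$ (a discarded ball meeting a selected ball of larger radius is only guaranteed to lie in its $3$-dilate), whereas the containment you want, that the dilate of the selected shrunk ball stays inside the original $B_{n_j}$, tolerates a factor at most $1/v'$. Since $v'>v$ and $v$ is only assumed to be \emph{some} number $<1$ (you cannot decrease it: $\limsup_n(v''B_n)$ for $v''<v$ is a smaller set, so full measure is not inherited), in general $1/v'<3$ and "$3v'B_{n_j}\subset B_{n_j}$" simply fails; and if instead you run the selection on the shrunk balls $v'B_n$, the selected full balls $B_{n_j}$ need no longer be disjoint, so condition (ii) of Definition \ref{ac} is lost, while running it on the full balls puts you back to needing $\mu\big(\bigcup_j 3B_{n_j}\big)\lesssim\mu\big(\bigcup_j B_{n_j}\big)$, i.e.\ doubling. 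The proof the paper relies on (in \cite{ED2}) avoids Vitali and uses the Besicovitch-type covering lemma with parameter, Lemma \ref{besimodi}: for $x\in A:=\Omega\cap\limsup_{n\ge g,\,B_n\subset\Omega}(vB_n)$ (which has $\mu(A)=\mu(\Omega)$, and may first be replaced by a bounded subset of nearly full measure), choose $n_x\ge g$ with $x\in vB_{n_x}$ and $B_{n_x}\subset\Omega$, and set $L_x=B(x,(1-v)r_{n_x})$, so that $L_x\subset B_{n_x}\subset\frac1w L_x$ with $w=\frac{1-v}{1+v}$. Lemma \ref{besimodi} applied with parameter $w$ produces $Q_{d,w}$ families of balls $L_x$ covering $A$ whose $\frac1w$-dilates are pairwise disjoint within each family; hence the corresponding balls $B_{n_x}$ in a fixed family are pairwise disjoint, pigeonholing gives one family whose associated $B_{n_x}$ carry measure at least $\mu(\Omega)/Q_{d,w}$, and a finite subfamily yields the a.c.\ property with $C=\frac{1}{2Q_{d,w}}$. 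This bounded-multiplicity (Besicovitch) ingredient is what your Vitali selection cannot replace.
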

%%%%%%%%%%%%%%%%%%%%%%%%%%
%Item (1) of Lemma~\ref{equiac} shows that the $\mu$-asymptotically covering property implies that the balls $\mathcal{B} $ are well distributed from the $\mu$ standpoint.  Item (2) will be used later on in Section \ref{sec-example}  to deal with self-similar measures. 

\subsection{Essential content and statement of the main result}

\mk

The key geometric notion for the ubiquity theorem developed in this paper is the following. 
%%%%%%%%%%%%%%%%%%%%%%%%%%
\begin{definition} 
\label{mucont}
{Let $\mu \in\mathcal{M}(\R^d)$, and $s\geq 0$.
The $s$-dimensional $\mu$-essential Hausdorff content at scale $t\in(0,+\infty]$ of a set $A\subset \mathcal B(\R^d)$ is defined as}
{\begin{equation}
\label{eqmucont}
 \mathcal{H}^{\mu,s}_{t}(A)=\inf\left\{\mathcal{H}^{s}_{t}(E): \ E\subset  A , \ \mu(E)=\mu(A)\right\}.
 \end{equation}}
\end{definition}
%%%%%%%%%%%%%%%%%%%%%%%%%%
One will almost exclusively look at these contents at scale $t=+\infty$ and one refers to   $ \mathcal{H}^{\mu,s}_{\infty}(A)$  as the {\em $s$-dimensional  $\mu$-essential Hausdorff content of $A$}. Basic properties of those quantities are studied in Section \ref{sec-mino}, and precise estimates  of $ \mathcal{H}^{\mu,s}_{\infty}(A)$ are achieved for the Lebesgue measure and self-similar measures  in Section \ref{sec-example}. 

Note that in \cite[Theorem 3.1]{KR}  the key underlying geometric notion used to handle the variety of shapes of the sets $(U_n)_{n\in\mathbb{N}}$ is the Hausdorff content. It is easily seen from  \eqref{hcont} that the Hausdorff content also carries some ``high scale'' geometric information (because there is no restriction concerning the diameter of the balls $(B_n)$ in \eqref{hcont}). This will also be the case in this article to handle not only the shape of the sets $(U_n)_{n\in\mathbb{N}}$ but also the geometric behavior related to the measure $\mu$ at high scale in the sets $(U_n)_{n\in\mathbb{N}}.$ 

% Let us emphasize that, if the measure $\mu$ is ``well suited'' to the couple $((B_n)_{n\in\mathbb{N}},(U_n)_{n\in\mathbb{N}})$, then the $\mu$-essential content is effective to estimate the Hausdorff dimension of $\limsup_{n\rightarrow+\infty}U_n$ (see Section \ref{sec-upper}, second item of Remark \ref{remajo1} for more precise statements). 
%%%%%%%%%%%%%%%%%%%%%%%%%%

\medskip

The $s$-dimensional  $\mu$-essential Hausdorff content is now used to associate a critical exponent to any sequence of open sets $(U_n)_{n\in\mathbb N}$ such that $U_n\subset B_n$ for all $n\in\mathbb N$. This exponent is involved in our lower bound estimate of $\dim_H (\limsup_{n\to +\infty} U_n)$. 

%%%%%%%%%%%%%%%%%%%%%%%%%%
\begin{definition} 
\label{expoani}
Let $\mu\in\mathcal{M}(\R^d)$. If $B$ and  $U$ are Borel subsets of $\R^d$,  the {\em $\mu$-critical exponent} of  $(B,U)$ is defined as
\begin{equation}
\label{defsndani}
 s_\mu(B,U) =\sup\left\{s\geq 0 \ : \ \mathcal{H}^{\mu,s}_{\infty}\left (U\right)\geq \mu( B)\right\}.
  \end{equation}

Let  $\mathcal{B}= (B_n)_{n\in\mathbb{N}}$ be a sequence of closed balls, $\mathcal{U}= (U_n)_{n\in\mathbb{N}}$ a sequence of Borel subsets of $\R^d$, and $s\geq 0$. 

Let
 \begin{align}
 \label{defNsdani}
 \mathcal{N}_\mu(\mathcal{B},\mathcal{U},s) & =\left\{n\in\mathbb{N} \ : s_\mu(B_n,U_n) \geq s\right\}. 
 \end{align}
Then, define the $\mu$-critical exponent of $(\mathcal{B},\mathcal{U})$ as
 \begin{align}
\label{defsmdani}
 s(\mu,\mathcal{B},\mathcal{U}) & =\sup\left\{s \geq 0 : \ (B_n)_{n\in\mathcal{N}_\mu(\mathcal{B},\mathcal{U},s)}  \mbox{ is } \mu\mbox{-a.c.} \right\}.
  \end{align}
\end{definition}

It is worth noting that, for $s^{\prime}\leq s$, one has $  \mathcal{N}_\mu(\mathcal{B},\mathcal{U},s)\subset \mathcal{N}_\mu(\mathcal{B},\mathcal{U},s^{\prime}) $.

\medskip 

The main result of this paper is the following.
  
  \begin{theoreme} 
\label{lowani}
Let  $\mathcal{B} =(B_n)_{n\in \N} $ be a   sequence of closed balls of $\R^d$ such that $\vert B_n \vert \to 0$ and $\mathcal{U}=(U_n)_{n\in\mathbb{N}}$ a sequence of open sets such that $U_n \subset B_n$ for all $n\in\mathbb{N}$.

Then, for every   $\mu \in\mathcal{M}(\R^d)$ such that $\min\left\{s(\mu , \mathcal{B},\mathcal{U}),\dimm(\mu)\right\}>0$ there exists a gauge function $\zeta:\R^+\to \R^+$ such that $\lim_{r\to 0^+} \frac{\log \zeta(r)}{\log r} =\min\left\{ s(\mu, \mathcal{B},\mathcal{U}),\dimm(\mu)\right\}$  and  
$$\mathcal{H}^\zeta( \limsup_{n\rightarrow +\infty}U_n) >0.$$
In particular, for every   $\mu \in\mathcal{M}(\R^d)$, one has
 \begin{equation}
\label{conc2ani}
 \dim_{H}\left(\limsup_{n\rightarrow +\infty}U_n\right)\geq \min\left\{s(\mu , \mathcal{B},\mathcal{U}),\dimm(\mu)\right\}.
 \end{equation}
\end{theoreme}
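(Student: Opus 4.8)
The plan is to build a Cantor-like subset of $\limsup_{n\to\infty}U_n$ together with a measure supported on it, and then apply the mass distribution principle for the gauge $\zeta$. Write $s=\min\{s(\mu,\mathcal B,\mathcal U),\dimm(\mu)\}$, and fix any $s'<s''<s$. By definition of $s(\mu,\mathcal B,\mathcal U)$, the subsequence $(B_n)_{n\in\mathcal N_\mu(\mathcal B,\mathcal U,s'')}$ is $\mu$-a.c., and for each such $n$ we have $s_\mu(B_n,U_n)\ge s''$, so by the definition of $s_\mu$ there is a subset $E_n\subset U_n$ with $\mu(E_n)=\mu(U_n)$ (hence $\mu(E_n)=\mu(B_n)$ when $U_n$ carries the full mass of $B_n$; in general one keeps track of $\mu(U_n)$) and $\mathcal H^{s'}_\infty(E_n)\ge \mu(B_n)$ — the strict inequality $s'<s''$ giving room. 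The $\mu$-a.c. property is exactly the tool that lets us, inside any open set $\Omega$ and beyond any index $g$, extract finitely many pairwise disjoint balls $B_{n_i}$ (all with $n_i\in\mathcal N_\mu(\mathcal B,\mathcal U,s'')$) with $\mu(\bigcup B_{n_i})\ge C\mu(\Omega)$.

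First I would set up the recursive construction. Start with a generation-$0$ ball $\Omega_0$ on which $\mu$ is positive. Given a generation-$k$ ball $B$ (one of the selected $B_{n}$'s, with interior $\widering B$ playing the role of the open set $\Omega$), apply the $\mu$-a.c. property inside $\widering B$ with a threshold $g$ large enough that all chosen balls have diameter much smaller than $|B|$ and that $|B_n|\to 0$ has kicked in; this yields disjoint sub-balls $B_{n_1},\dots,B_{n_{N}}\subset\widering B$ with $\mu(\bigcup_i B_{n_i})\ge C\mu(B)$. These become the generation-$(k+1)$ balls below $B$. The key point — this is where the essential content enters — is that although we select the balls $B_{n_i}$, the mass we will ultimately distribute must live on the smaller sets $U_{n_i}$, indeed on $E_{n_i}\subset U_{n_i}$. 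Because $\mathcal H^{s'}_\infty(E_{n_i})\ge\mu(B_{n_i})$, there is (by definition of Hausdorff content and a standard argument, e.g. the one in Koivusalo–Rams, using that the content of a set is controlled from below) a way to spread a mass comparable to $\mu(B_{n_i})$ over $E_{n_i}$ so that every ball $B(x,\rho)$ gets at most $\rho^{s'}$ of it — this is the Frostman-type lemma relating Hausdorff content to measures, valid at all scales $\rho\le|B_{n_i}|$. Iterating, I obtain a decreasing sequence of compact sets whose intersection $F\subset\limsup_n U_n$ (each point of $F$ lies in infinitely many $U_{n}$ by construction) carries a probability measure $m$.

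Next I would estimate $m(B(x,\rho))$ for $x\in F$ and small $\rho$, distinguishing the scale $\rho$ relative to the generation diameters. There are two regimes. When $\rho$ is comparable to or smaller than the diameter of the generation-$k$ ball $B$ containing $x$ but we are still ``inside'' the content-spreading of a single $E_n$, the Frostman bound gives $m(B(x,\rho))\lesssim \rho^{s'}$ directly. When $\rho$ straddles several children of some generation-$k$ ball $B$, we bound $m(B(x,\rho))$ by the total mass of the children it meets; here the disjointness of the selected balls together with the local dimension bound $\dimm(\mu)>s$ (which gives $\mu(B')\lesssim |B'|^{s''}$ for the relevant balls, up to a $\mu$-null set one discards, or more carefully via an exhaustion argument over local dimension level sets) controls the sum by $\rho^{s'}$ as well, provided the gaps between consecutive generations grow fast enough — which we can force by choosing $g$ huge at each step. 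Combining, $m(B(x,\rho))\le \zeta(\rho)$ for a gauge $\zeta$ with $\log\zeta(r)/\log r\to s'$; letting $s'\uparrow s$ and taking a diagonal/limiting gauge (or simply noting $s'$ is arbitrary $<s$) yields a gauge with the exponent $s$ as claimed, and the mass distribution principle gives $\mathcal H^\zeta(\limsup_n U_n)\ge\mathcal H^\zeta(F)>0$. The dimension bound \eqref{conc2ani} is then immediate since $\mathcal H^\zeta$-positivity with $\zeta(r)\approx r^{s-\ep}$ forces $\dim_H\ge s-\ep$ for every $\ep>0$.

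The main obstacle I anticipate is the simultaneous bookkeeping of \emph{two} geometric constraints at \emph{all} scales: the content lower bound on $E_n$ (a ``high-scale'' condition, since $\mathcal H^{s'}_\infty$ ignores diameters) must be reconciled with the ``low-scale'' regularity of $\mu$ coming from $\dimm(\mu)$, and the $\mu$-a.c. selection does not by itself give any upper control on $\mu(B_{n_i})$ or on how the $B_{n_i}$ are sized relative to one another. Handling this requires (a) pre-processing $\mu$ so that on the relevant full-measure piece one has a genuine two-sided comparison $\mu(B)\asymp|B|^{s''}$ at the scales that occur, likely by restricting to a level set $\{x:\mu(B(x,r))\le r^{s''}\text{ for } r\le r_0\}$ and renormalizing, exactly the device used in the Barral–Seuret and Koivusalo–Rams arguments; and (b) interpolating the Frostman measure on each $E_n$ with the conditional measure so that the transition across generations is smooth. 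Getting the gauge $\zeta$ to have the \emph{exact} limiting exponent $s$ (rather than $s-\ep$) is a further technical refinement — one typically builds $\zeta$ as a slowly-varying perturbation $r^s\psi(r)$ and checks $\log\psi(r)/\log r\to 0$ — but this is routine once the core two-scale estimate is in hand.
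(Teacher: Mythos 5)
Your overall architecture (nested generations selected with the $\mu$-a.c.\ property, a Carleson--Frostman measure spread on a full-$\mu$-measure subset $E_n\subset U_n$ with $\mathcal H^{s'}_\infty(E_n)\ge\mu(B_n)$, absorption of multiplicative prefactors by pushing each new generation to much smaller scales, and the mass distribution principle) is indeed the paper's strategy, but the step you call the second regime --- a ball $A$ straddling several children of a parent region $P$ --- is the crux, and your mechanism for it fails. Distributing mass among children proportionally to $\mu$ gives $m(A)\lesssim\frac{m(P)}{\mu(\Omega_P)}\,\mu(5A)$, where $\Omega_P$ is the region in which the children were selected; to convert this into $|A|^{s'-\epsilon}$ you need an upper bound on $\mu(5A)$ at scale $|A|$ \emph{and} a lower bound on $\mu(\Omega_P)$ at the parent scale with the \emph{same} exponent. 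The one-sided bound $\mu(B')\lesssim|B'|^{s''}$ coming from $\dimm(\mu)>s''$ is not enough, and ``choosing $g$ huge'' does not help: $\mu$ may concentrate essentially all of the parent's mass in a sub-ball of diameter $|P|^{100}$, the a.c.\ selection will then place the children precisely there, and a straddling ball $A$ of that intermediate diameter captures a fixed proportion of $m(P)\approx|P|^{s'}$ while $|A|^{s'}$ is vastly smaller; straddling balls exist at \emph{every} scale between child size and parent size, whatever the generation gap. Your proposed repair --- a two-sided comparison $\mu(B)\asymp|B|^{s''}$ on a full-measure piece after restricting to $\{x:\mu(B(x,r))\le r^{s''}\}$ --- is impossible exactly because $\dimm(\mu)>s''$: on a set of full measure the lower local dimension exceeds $s''$, so no lower bound at exponent $s''$ can hold, and moreover the local dimension may vary over all of $[\dimm(\mu),\overline{\dim}_H(\mu)]$.

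What is missing is the extra layer the paper inserts between consecutive generations. Inside each parent $U$ it takes $E_U=U\cap E_{\mu}^{[\alpha,\beta],\varepsilon}\cap\limsup_{n\to+\infty}B_n$, stratifies it into level sets of the lower local dimension of width $\varepsilon$, and selects \emph{intermediate balls} $B(x,r_x)\subset U$ centered at such points, at scales $r_x$ specifically chosen so that the two-sided estimate \eqref{recap1bisani} holds, i.e.\ $\mu(B(x,r_x))\ge r_x^{\theta_k+2\varepsilon}$ while every point $y$ of the level set satisfies $\mu(B(y,r))\le r^{\theta_k-\varepsilon}$ for all small $r$ (such scales exist because the lower local dimension is attained along a subsequence of radii). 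The mass $\eta$ is distributed over the intermediate balls proportionally to the Carleson--Frostman measure of $E_U$ (this gives the $|B|^{s}$-type control at that scale), and only \emph{inside} each intermediate ball are balls of $\mathcal B$ selected by the a.c.\ property, weighted by $\mu$ and required to meet the same level set, so that $\mu(5A)\le(10|A|)^{\theta_k-2\varepsilon}$ with the exponent matching the lower bound for $\mu(B)$; this matching is what closes your straddling case, and it is precisely the ``interpolation'' you defer to point (b) of your obstacle paragraph --- it is not routine, it is the proof. A secondary flaw: as written you run the a.c.\ selection inside the interior of the parent \emph{ball} $B_n$, so your limit set lies in $\limsup_{n\to+\infty}B_n$ rather than $\limsup_{n\to+\infty}U_n$; the next-generation balls must be placed inside the parent's open set $U_n$ (in the paper, inside intermediate balls contained in $U$ and centered on $E_U$, which is also why $E_U$ is intersected with $\limsup_{n\to+\infty}B_n$ --- to guarantee that balls of $\mathcal B$ can actually be found there).
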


\begin{remarque}
(1) It is easily verified that the lower-bound in Theorem \ref{lowani} equals  $-\infty$ if the sequence $(B_n)_{n\in\mathbb{N}}$ is not assumed to be  $\mu$-a.c. Consequently, for the previous result to give non trivial information one has to assume that $(B_n)_{n\in\mathbb{N}}$ is  $\mu$-a.c. The question is then to give more explicit estimates  of $s(\mu , \mathcal{B},\mathcal{U})$ depending on the specifities of $(\mu , \mathcal{B},\mathcal{U})$. 
\mk

%\item It is proven in Section \ref{sec-upper} that for a self-similar measure, under weak-redundancy assumption (see Definition \ref{wr}), if the $\mu$-essential Hausdorff content of the set is ``small'', then the Hausdorff dimension of $\limsup_{n\rightarrow+\infty}U_n$ drops \textcolor{red}{pr\'ecisez ce que cela signifie} (see Theorem \ref{majoss}). Some other remarks are made to explain why this phenomenon can also occur for any measure. This supports the idea that the Hausdorff essential content seems to be, in general, an accurate quantity to estimate Hausdorff dimension of $\dim_H (\limsup_{n\rightarrow+\infty}U_n).$
%\mk

(2) It is proved in Section~\ref{sec-mino} that $s(\mu , \mathcal{B},\mathcal{U})\leq \overline{\dim}_H (\mu).$ This implies that for exact dimensional measures, $\min\left\{s(\mu , \mathcal{B},\mathcal{U}),\dimm(\mu)\right\}=s(\mu , \mathcal{B},\mathcal{U}).$\mk

(3) The case where $\mu$ satisfies $\min\left\{s(\mu , \mathcal{B},\mathcal{U}),\dimm(\mu)\right\}=0$ could also be treated, but  although \eqref{conc2ani} is still obviously true, some distinction should further be made when investigating the existence of the gauge function. If $\mathcal{H}^{\mu,s}_{\infty}(U_n)=0$ for any $n\in\mathbb{N}$,  the set $\limsup_{n\rightarrow+\infty} U_n$ could, for instance, be empty. On the other hand, if $(B_n)_{n\in\mathbb{N}}$ is $\mu$-a.c and $s_{\mu}(B_n ,U_n)>0$ for any $n\in\mathbb{N}$, a gauge function can be constructed in a similar way than in the proof of Theorem \ref{lowani}. However that the existence of such a gauge function in the case  $\min\left\{s(\mu , \mathcal{B},\mathcal{U}),\dimm(\mu)\right\}=0$, is of little interest for practical applications, and is not treated in this article.
\end{remarque}

%%%%%%%%%%%%%%%%%%%%%%%%%%

A quite direct, but useful, corollary of Theorem \ref{lowani} is the following: 
 %%%%%%%%%%%%%%%%%%%%%%%%%%
\begin{corollary} 
\label{zzani}
Let $\mu\in\mathcal{M}(\R^d)$ and $\mathcal{B}= (B_n)_{n\in\mathbb{N}}$ be a   $\mu$-a.c. sequence of  closed balls of~$\R^d$.
Let $\mathcal{U}=(U_n)_{n\in\mathbb{N}}$ be a sequence of open sets such that 
$U_n \subset B_n$ for all $n\in \mathbb N$, and $0\leq s\leq \dimm(\mu)$. If $ \limsup_{n\rightarrow+\infty}
\frac{\log\mathcal{H}^{\mu,s}_{\infty}(U_n )}{\log \mu( B_n )} \leq 1$, then 
$s(\mu,\mathcal B,\mathcal U)\ge s$, so that 
$$\dim_{H}(\limsup_{n\rightarrow +\infty}U_n)\geq s .
$$

\end{corollary}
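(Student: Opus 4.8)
The plan is to deduce Corollary \ref{zzani} directly from Theorem \ref{lowani}. Since $0\le s\le\dimm(\mu)$, once we show $s(\mu,\mathcal B,\mathcal U)\ge s$ we will have $\min\{s(\mu,\mathcal B,\mathcal U),\dimm(\mu)\}\ge s$, and Theorem \ref{lowani} will give $\dim_H(\limsup_n U_n)\ge s$ (noting that if $s=0$ the conclusion is trivial, so we may assume $s>0$ to invoke the theorem, or simply observe that the inequality $\dim_H\ge 0$ always holds). So the whole content is the inequality $s(\mu,\mathcal B,\mathcal U)\ge s$, which by definition \eqref{defsmdani} amounts to showing that $(B_n)_{n\in\mathcal N_\mu(\mathcal B,\mathcal U,s')}$ is $\mu$-a.c.\ for every $s'<s$ (or more precisely, that the supremum defining $s(\mu,\mathcal B,\mathcal U)$ is at least $s$).

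First I would unwind the hypothesis $\limsup_{n\to\infty}\frac{\log\mathcal H^{\mu,s}_\infty(U_n)}{\log\mu(B_n)}\le 1$. Because $\mu(B_n)\in(0,1]$ (we may discard the finitely many $n$ with $\mu(B_n)\ge 1$, or handle them trivially, since $|B_n|\to 0$ forces $\mu(B_n)\to$ small values on $\supp\mu$; in any case $\log\mu(B_n)<0$ eventually), the condition says that for every $\eta>0$, for all $n$ large enough, $\frac{\log\mathcal H^{\mu,s}_\infty(U_n)}{\log\mu(B_n)}\le 1+\eta$, i.e.\ $\log\mathcal H^{\mu,s}_\infty(U_n)\ge (1+\eta)\log\mu(B_n)$ (the inequality flips because $\log\mu(B_n)<0$), hence $\mathcal H^{\mu,s}_\infty(U_n)\ge\mu(B_n)^{1+\eta}$. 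The next step is to compare this with the $\mu$-critical exponent $s_\mu(B_n,U_n)$ from \eqref{defsndani}: I claim that for a fixed $s'<s$, eventually $\mathcal H^{\mu,s'}_\infty(U_n)\ge\mu(B_n)$, so that $n\in\mathcal N_\mu(\mathcal B,\mathcal U,s')$ for all large $n$. For this I need a monotonicity/interpolation bound relating $\mathcal H^{\mu,s'}_\infty(U_n)$ to $\mathcal H^{\mu,s}_\infty(U_n)$ together with the diameter bound $|U_n|\le|B_n|\to 0$: indeed, since a covering of any $E\subset U_n$ by balls of diameter $\le|B_n|$ satisfies $\sum|B_k|^{s'}\ge |B_n|^{s'-s}\sum|B_k|^s$, one gets $\mathcal H^{\mu,s'}_\infty(U_n)\ge |B_n|^{s'-s}\mathcal H^{\mu,s}_\infty(U_n)\ge |B_n|^{s'-s}\mu(B_n)^{1+\eta}$. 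Choosing $\eta$ small and using $\dimm(\mu)\ge s$ — which gives a lower local-dimension control forcing $\mu(B_n)^{1+\eta}$ not too small compared to $|B_n|^{s-s'}$, at least after passing to a suitable subsequence or using the essential-content definition — one concludes $\mathcal H^{\mu,s'}_\infty(U_n)\ge\mu(B_n)$ eventually.

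Actually the cleaner route avoids local dimension: I would instead argue that the set $\mathcal N_\mu(\mathcal B,\mathcal U,s')$ is \emph{cofinite} for every $s'<s$. From $\mathcal H^{\mu,s}_\infty(U_n)\ge\mu(B_n)^{1+\eta}$ and the scale-$|B_n|$ interpolation, $s_\mu(B_n,U_n)\ge s''$ whenever $|B_n|^{s''-s}\mu(B_n)^{1+\eta}\ge\mu(B_n)$, i.e.\ $|B_n|^{s''-s}\ge\mu(B_n)^{-\eta}$, i.e.\ $(s-s'')\log\frac1{|B_n|}\le\eta\log\frac1{\mu(B_n)}$. For this I do need $\mu(B_n)$ not to decay too fast relative to $|B_n|$ — but here the key point is that it \emph{can} decay fast without harm, what matters is: can it decay \emph{slowly}? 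If $\mu(B_n)$ is comparable to $1$ infinitely often, those $n$ are automatically in $\mathcal N_\mu(\mathcal B,\mathcal U,s')$ trivially since $\mathcal H^{\mu,s'}_\infty(U_n)\ge\mathcal H^{\mu,s}_\infty(U_n)\ge\mu(B_n)^{1+\eta}$ only helps when $\mu(B_n)<1$... Let me reorganize: the honest statement is that the hypothesis directly gives, for any $s'<s$ and all large $n$, that $s_\mu(B_n,U_n)\ge s'$, PROVIDED we also know $\mu(B_n)\to 0$, because then $\mu(B_n)^{1+\eta}$ vs $\mu(B_n)$: we have $\mathcal H^{\mu,s'}_\infty(U_n)\ge|B_n|^{s'-s}\mu(B_n)^{1+\eta}$ and we want this $\ge\mu(B_n)$; writing $a_n=\log\frac1{|B_n|}\to\infty$ and $b_n=\log\frac1{\mu(B_n)}$, this reads $(s-s')a_n\le\eta b_n$, so it holds as long as $b_n\ge\frac{s-s'}{\eta}a_n$, i.e.\ $\mu(B_n)\le|B_n|^{(s-s')/\eta}$. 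When this fails, $\mu(B_n)>|B_n|^{(s-s')/\eta}$, and then I would instead use the cruder bound $\mathcal H^{\mu,s'}_\infty(U_n)\ge$ something like $\mu(B_n)^{s'/\dimm(\mu)}$ coming from the fact (to be recalled from Section \ref{sec-mino}) that the $\mu$-essential content of any set $A$ is bounded below in terms of $\mu(A)$ and $\dimm(\mu)$; since $s'\le s\le\dimm(\mu)$ this gives $\mathcal H^{\mu,s'}_\infty(U_n)\ge\mu(B_n)^{s'/\dimm(\mu)}\ge\mu(B_n)\ge$ works when $\mu(B_n)$ is bounded away from $0$...

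This case-splitting is the main obstacle — reconciling the two regimes (fast versus slow decay of $\mu(B_n)$) cleanly. In practice the argument I expect to use is: fix $s'<s$ and pick $\eta>0$ with $(1+\eta)<s/s'\cdot\frac{\dimm(\mu)}{\dimm(\mu)}$... — more simply, choose $s'<s''<s$ and $\eta$ small enough that $s''(1+\eta)<s$ hmm. The robust fix is to use the \emph{lemma from Section \ref{sec-mino}} asserting $\mathcal H^{\mu,s'}_\infty(U_n)\ge\mathcal H^{\mu,s}_\infty(U_n)^{s'/s}$ or an analogous homogeneity estimate valid when $s'\le s$; combined with $\mathcal H^{\mu,s}_\infty(U_n)\ge\mu(B_n)^{1+\eta}$ this yields $\mathcal H^{\mu,s'}_\infty(U_n)\ge\mu(B_n)^{(1+\eta)s'/s}$, and choosing $\eta$ so small that $(1+\eta)s'/s\le 1$ (possible since $s'<s$) gives $\mathcal H^{\mu,s'}_\infty(U_n)\ge\mu(B_n)$, hence $s_\mu(B_n,U_n)\ge s'$, so $n\in\mathcal N_\mu(\mathcal B,\mathcal U,s')$ for all large $n$. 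Since $\mathcal N_\mu(\mathcal B,\mathcal U,s')$ is cofinite and $(B_n)_{n\in\mathbb N}$ is $\mu$-a.c., the tail $(B_n)_{n\in\mathcal N_\mu(\mathcal B,\mathcal U,s')}$ is also $\mu$-a.c.\ — this last point is immediate from Definition \ref{ac}, since the $\mu$-a.c.\ property only refers to balls with index $\ge g$ for arbitrarily large $g$, so removing finitely many balls changes nothing. Therefore $s(\mu,\mathcal B,\mathcal U)\ge s'$ for every $s'<s$, whence $s(\mu,\mathcal B,\mathcal U)\ge s$, and Theorem \ref{lowani} finishes the proof.
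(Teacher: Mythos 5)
Your final argument is correct and is essentially the paper's intended route: the paper leaves Corollary \ref{zzani} as a ``quite direct'' consequence, and the step you settle on — combining $\mathcal{H}^{\mu,s}_{\infty}(U_n)\geq \mu(B_n)^{1+\eta}$ with the concavity property $\mathcal{H}^{\mu,s'}_{\infty}(A)\geq (\mathcal{H}^{\mu,s}_{\infty}(A))^{s'/s}$ (Proposition \ref{propmuc}(4)), choosing $\eta$ with $(1+\eta)s'/s\leq 1$ so that $\mathcal{N}_\mu(\mathcal B,\mathcal U,s')$ is cofinite, noting that $\mu$-a.c.\ is a tail property, and invoking Theorem \ref{lowani} — is exactly the mechanism the paper itself uses in the proof of Corollary \ref{minoeffec}. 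The abandoned detours in the middle (the scale-$|B_n|$ interpolation with its case-splitting, and the unsupported bound $\mathcal{H}^{\mu,s'}_{\infty}(U_n)\geq \mu(B_n)^{s'/\dimm(\mu)}$) are explicitly discarded and do not affect the validity of the final, self-contained argument.
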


In the classical case where the sets $U_n$ are shrunk balls of the form $B_n ^{\delta}$ (with $\delta\geq 1$), it is convenient to consider the following quantity: 

\begin{definition}
\label{deftdelta}
Let $\mu \in\mathcal{M}(\R^d)$, $\varepsilon>0$ and $\mathcal{B}= (B_n)_{n\in\mathbb{N}}$ be a sequence of balls of $\R^d.$ For every $\delta \ge 1$, set
\begin{equation}
\label{tdeltainf}
t(\mu, \delta,\varepsilon ,\mathcal{B} )=\limsup_{n\rightarrow+\infty}\frac{\log(\mathcal{H}_{\infty}^{\mu,\dimm(\mu)-\varepsilon}(\widering{B}_n ^{\delta}))}{\log(\vert B_n ^{\delta} \vert)}.
\end{equation}
Then the $(\mu,\delta)$-exponent of the sequence $\mathcal{B} $ is defined as
\begin{equation}
\label{muexpo}
t(\mu,\delta,\mathcal{B} )=\lim_{\varepsilon\to 0}t(\mu,\delta,\varepsilon ,\mathcal{B}).
\end{equation} 

\end{definition}

It follows from the definitions that $t(\mu,\delta,\mathcal{B} )$ exists as a limit, since $\ep\mapsto t(\mu, \delta,\varepsilon ,\mathcal{B} )$ is monotonic. Moreover, one has $\dimm(\mu)\le t(\mu,\delta,\mathcal{B})$ (see the proof of Corollary~\ref{minoeffec}).
%%%%%%%%%%%%%%%%%%%%%%%%%%

Next result provides a more explicit lower bound estimate of the Hausdorff dimension of the limsup  of $\delta$-contracted balls; it is a consequence of Corollary~\ref{zzani}.

\begin{corollary}
\label{minoeffec}
Let $\mu \in\mathcal{M}(\R^d)$ and  $\mathcal{B}= (B_n)_{n\in\mathbb{N}}$  a $\mu$-a.c  sequence of closed balls of~$\R^d$. 
%\begin{enumerate}
%\item 
Suppose that $\dimm(\mu)>0$. For every $\delta\geq 1$, setting 

$$
s_\delta= \frac{\dimm(\mu)}{\delta} \cdot \frac{\dimm(\mu)} {t(\mu,\delta,\mathcal{B})},
$$
one has 
$$
s\big (\mu, (B_n)_{n\in\N},(\widering B_n ^{\delta})_{n\in\N}\big )\ge s_\delta,
$$ 
hence
$$\dim_H (\limsup_{n\rightarrow+\infty}B_n ^{\delta})\ge \dim_H (\limsup_{n\rightarrow+\infty}\widering B_n ^{\delta})\ge s_\delta.$$

%\item Assume furthermore that $\mathcal{B} $ is weakly redundant.  Then for every $\delta\geq 1$, 
%$$\dim_H (\limsup_{n\rightarrow+\infty}B_n ^{\delta})\leq \frac{  \limsup_{n\rightarrow+\infty}\frac{\log \mu(B_n )}{\log(\vert B_n \vert)}}{\delta}.$$
%\end{enumerate}
\end{corollary}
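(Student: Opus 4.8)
\textbf{Proof proposal for Corollary \ref{minoeffec}.}

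The plan is to deduce this from Corollary \ref{zzani} applied to the sequence of open sets $\mathcal U=(\widering B_n^\delta)_{n\in\N}$, with the value $s=s_\delta$. Since $\dimm(\mu)\le t(\mu,\delta,\mathcal{B})$ and $\delta\ge 1$, we have $s_\delta\le\dimm(\mu)$, so the constraint $0\le s\le\dimm(\mu)$ required by Corollary \ref{zzani} is met; moreover $s_\delta>0$ because $\dimm(\mu)>0$ and $t(\mu,\delta,\mathcal B)<+\infty$ (the latter should be checked, but follows since $\mathcal H^{\mu,\dimm(\mu)-\varepsilon}_\infty(\widering B_n^\delta)\le\mathcal H^{\dimm(\mu)-\varepsilon}_\infty(B_n^\delta)\le |B_n^\delta|^{\dimm(\mu)-\varepsilon}$, which forces $t(\mu,\delta,\varepsilon,\mathcal B)\le\dimm(\mu)-\varepsilon<+\infty$, and in fact this same inequality gives $\dimm(\mu)\le t(\mu,\delta,\mathcal B)$ after one checks the lower bound separately). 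Then everything reduces to verifying the hypothesis
$$
\limsup_{n\to+\infty}\frac{\log\mathcal H^{\mu,s_\delta}_\infty(\widering B_n^\delta)}{\log\mu(B_n)}\le 1.
$$

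First I would fix a small $\varepsilon>0$ and write $s=\dimm(\mu)-\varepsilon$, $t=t(\mu,\delta,\varepsilon,\mathcal B)$. By definition of $t(\mu,\delta,\varepsilon,\mathcal B)$ as a $\limsup$, for all $n$ large enough one has
$$
\mathcal H^{\mu,s}_\infty(\widering B_n^\delta)\le |B_n^\delta|^{\,t-\varepsilon'}=|B_n|^{\,\delta(t-\varepsilon')}
$$
for any prescribed $\varepsilon'>0$; conversely, for a subsequence it is $\ge |B_n|^{\,\delta(t+\varepsilon')}$, but only the upper bound is needed here. The key step is a \emph{comparison inequality between $\mu$-essential Hausdorff contents at two different exponents}: for $0\le s'\le s$ and any ball $B$ of small diameter, $\mathcal H^{\mu,s'}_\infty(\widering B^\delta)\ge \mathcal H^{\mu,s}_\infty(\widering B^\delta)^{s'/s}$, up to a harmless multiplicative constant — this is the content-analogue of the elementary inequality $\mathcal H^{s'}_\infty(E)\ge \mathcal H^s_\infty(E)^{s'/s}$ for $E$ contained in a set of diameter $\le 1$, applied to the optimal subset $E\subset\widering B^\delta$ realizing the essential content at exponent $s$ (note $\mu(E)=\mu(\widering B^\delta)$ keeps it admissible at exponent $s'$). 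Granting this, with $s'=s_\delta$ and using $s_\delta\le s=\dimm(\mu)-\varepsilon$ once $\varepsilon$ is small, I get for large $n$
$$
\mathcal H^{\mu,s_\delta}_\infty(\widering B_n^\delta)\ge c\,\big(|B_n|^{\delta(t-\varepsilon')}\big)^{s_\delta/(\dimm(\mu)-\varepsilon)}.
$$

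Now I bring in the local dimension: since $s_\delta<\dimm(\mu)$, for $\mu$-a.e.\ point and hence (after the usual reduction, restricting to the relevant subsequence of balls whose centers lie in a full-measure set of good local behavior) for all $n$ in a cofinite set, $\mu(B_n)\ge |B_n|^{\dimm(\mu)+\varepsilon}$; equivalently $\log\mu(B_n)\ge(\dimm(\mu)+\varepsilon)\log|B_n|$, and since $\log|B_n|<0$ this reverses to an upper bound on $1/\log\mu(B_n)$. Combining,
$$
\frac{\log\mathcal H^{\mu,s_\delta}_\infty(\widering B_n^\delta)}{\log\mu(B_n)}\le \frac{\delta(t-\varepsilon')\,\frac{s_\delta}{\dimm(\mu)-\varepsilon}\,\log|B_n|+\log c}{(\dimm(\mu)+\varepsilon)\log|B_n|}\xrightarrow[n\to\infty]{}\frac{\delta\, t\, s_\delta}{\dimm(\mu)(\dimm(\mu)+\varepsilon)}\cdot\frac{\dimm(\mu)}{\dimm(\mu)-\varepsilon},
$$
and letting $\varepsilon,\varepsilon'\to 0$ (so $t\to t(\mu,\delta,\mathcal B)$), the right-hand side tends to $\delta\, t(\mu,\delta,\mathcal B)\, s_\delta/\dimm(\mu)^2$, which by the very definition of $s_\delta$ equals exactly $1$. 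Hence the $\limsup$ is $\le 1$ and Corollary \ref{zzani} gives $s(\mu,\mathcal B,\mathcal U)\ge s_\delta$, whence the dimension bound; the inequality $\dim_H(\limsup B_n^\delta)\ge\dim_H(\limsup\widering B_n^\delta)$ is trivial from $\widering B_n^\delta\subset B_n^\delta$.

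\textbf{Main obstacle.} The delicate point is making the two $\limsup$/$\liminf$ quantifiers cooperate: $t(\mu,\delta,\varepsilon,\mathcal B)$ is a $\limsup$, so the favorable lower bound $\mathcal H^{\mu,s}_\infty(\widering B_n^\delta)\gtrsim|B_n|^{\delta(t+\varepsilon')}$ holds only along a subsequence, \emph{not} for all large $n$ — yet we only need an \emph{upper} bound on the ratio's $\limsup$, so it suffices to control $\mathcal H^{\mu,s_\delta}_\infty$ from below by $|B_n|^{\delta t-o(\log|B_n|)}$ uniformly for large $n$, which is precisely what the definition of $t$ as a $\limsup$ grants (the $\limsup$ being an upper envelope). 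One must be careful that this "uniform" lower bound does require the content not to collapse below the $\limsup$ rate; since $\widering B_n^\delta$ has positive $\mu$-measure for $\mu$-a.e.-centered balls and positive Hausdorff content at any exponent below its dimension, this is fine, but the clean way is to run the argument only along the subsequence $\mathcal N_\mu(\mathcal B,\mathcal U,s_\delta)$ and invoke its $\mu$-a.c.\ property directly — which is exactly the structure Corollary \ref{zzani} is built to handle. The other technical care is justifying the content-comparison inequality $\mathcal H^{\mu,s'}_\infty\gtrsim(\mathcal H^{\mu,s}_\infty)^{s'/s}$ with constants independent of $n$; this uses that $|B_n^\delta|\le 1$ eventually (as $|B_n|\to0$ and $\delta\ge1$), so no scale-truncation issue arises.
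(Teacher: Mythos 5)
Your skeleton is the paper's own: reduce to Corollary \ref{zzani}, feed in the definition \eqref{tdeltainf} of $t(\mu,\delta,\varepsilon,\mathcal{B})$, pass from the exponent $\dimm(\mu)-\varepsilon$ down to (essentially) $s_\delta$ via the concavity property of essential contents (Proposition \ref{propmuc}(4)), compare with $\mu(B_n)$, and let $\varepsilon\to 0$. But at both crucial places your inequalities point the wrong way. First, since $\log\vert B_n^\delta\vert<0$, the statement ``$\limsup_n \frac{\log \mathcal{H}^{\mu,\dimm(\mu)-\varepsilon}_{\infty}(\widering B_n^{\delta})}{\log\vert B_n^{\delta}\vert}=t$'' gives, for \emph{all} large $n$, the \emph{lower} bound $\mathcal{H}^{\mu,\dimm(\mu)-\varepsilon}_{\infty}(\widering B_n^{\delta})\geq \vert B_n^{\delta}\vert^{t+\varepsilon'}$, and an upper bound only along a subsequence — the exact opposite of what you wrote (``$\leq\vert B_n^\delta\vert^{t-\varepsilon'}$ eventually; only the upper bound is needed''). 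It is precisely the eventual lower bound that is needed, and your next display tacitly uses a lower bound anyway (with the unjustified exponent $t-\varepsilon'$); your ``main obstacle'' paragraph then contradicts the earlier sentence on this same point. The same reversal appears in your side remark: $\mathcal{H}^{\mu,\dimm(\mu)-\varepsilon}_{\infty}(\widering B_n^\delta)\leq\vert B_n^\delta\vert^{\dimm(\mu)-\varepsilon}$ forces $t\geq\dimm(\mu)-\varepsilon$, not $t\leq\dimm(\mu)-\varepsilon$ (fortunately the lower bound on $t$ is the one you actually need for $s_\delta\leq\dimm(\mu)$).

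Second, and more seriously, the comparison with $\mu(B_n)$ is wrong as stated. To get $\mathcal{H}^{\mu,\cdot}_{\infty}(\widering B_n^{\delta})\geq\mu(B_n)$, i.e. to divide by $\log\mu(B_n)<0$ in the direction you claim, you need an \emph{upper} bound $\mu(B_n)\leq\vert B_n\vert^{\dimm(\mu)-\varepsilon}$; you assert instead the lower bound $\mu(B_n)\geq\vert B_n\vert^{\dimm(\mu)+\varepsilon}$, which is both incompatible with your displayed inequality and false in general (it would require control of the \emph{upper} local dimension by $\dimm(\mu)+\varepsilon$, and the centers $x_n$ are prescribed, not chosen in a full-measure set, so ``a.e. good local behavior'' does not apply to the given balls). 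The paper's fix is Lemma \ref{gscainf}: one may extract a subsequence along which $\mu(B_n)\leq r_n^{\dimm(\mu)-\varepsilon}$ \emph{which is still $\mu$-a.c.}; preserving the $\mu$-a.c. property under this extraction is the nontrivial ingredient your ``usual reduction'' glosses over. Finally, verifying the hypothesis of Corollary \ref{zzani} at exactly $s=s_\delta$ breaks down in the boundary case $s_\delta=\dimm(\mu)$ (e.g. $\delta=1$, $t=\dimm(\mu)$), where your requirement $s_\delta\leq\dimm(\mu)-\varepsilon$ fails; the paper avoids this by proving the hypothesis for the slightly smaller exponents $s_{\delta,\varepsilon}=\frac{(1-\varepsilon)\dimm(\mu)(\dimm(\mu)-\varepsilon)}{\delta\, t(\mu,\delta,\varepsilon,\mathcal{B})}$ and letting $\varepsilon\to0$ only in the conclusion. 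With the directions corrected, the extraction of Lemma \ref{gscainf} invoked, and the limit taken at the level of $s_{\delta,\varepsilon}$, your computation does reproduce the paper's proof.
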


%%%%%%%%%%%%%%%%%%%%%%%%%%%%%%%%%
%For self-similar measures $\mu$, for a ball $B$ and $s< \dimm(\mu)$, it will be proved that one has $\mathcal{H}^{\mu,s}_{\infty}(B)\approx \vert B \vert^s$. Also, it will be seen that the balls of $\mu$-a.c sequence can always be assumed to verify $\mu(B_n)\lessapprox \vert B_n \vert^{\dimm(\mu)}.$ This in turns involves that $s=\frac{\dimm(\mu)}{\delta}$ fulfills the condition of Corollary \ref {zzani}, which implie
\subsection{Application to self-similar measures}

Let us start by recalling the definition of a self-similar measure.

%\begin{definition}
%Let $U\subset \mathbb R^d$ be an open set. A differential map $f: U\to \R^d$ is conformal if $f'(x):\R^d\to\R^d$ satisfies $\|f'(x)\|\neq 0$ and $\|f'(x)y\|=\|f'(x)\|\,\|y\|$ for all $x\in U$ and $y\in\R^d$. 
%
%Let $X$ be a compact subset of $\R^d$. A map $f$ is a $C^1$ conformal contraction on $X$ if $S(X)\subset X$ and there is an open set $U$ such that $f$ is a $C^1$ conformal diffeomormphism on $U$ and $\sup_{x\in X}\|f'(x)\|<1$.  
%\end{definition}

\begin{definition}
\label{def-ssmu}  A self-similar IFS is a family $S=\left\{f_i\right\}_{i=1}^m$ of $m\ge 2$ contracting similarities  of $\mathbb{R}^d$. 

Let $(p_i)_{i=1,...,m}\in (0,1)^m$ be a positive probability vector, i.e. $p_1+\cdots +p_m=1$.

The self-similar measure $\mu$ associated with $ \left\{f_i\right\}_{i=1}^m$  and $(p_i)_{i=1}^m$ is the unique probability measure such that 
\begin{equation}
\label{def-ssmu2}
\mu=\sum_{i=1}^m p_i \mu \circ f_i^{-1}.
\end{equation}

The topological support of $\mu$ is the attractor of $S$, that is the unique non-empty compact set $K\subset X$ such that  $K=\bigcup_{i=1}^m f_i(K)$.

\end{definition}
%%%%%%%%%%%%%%%%%%%%%%%%%%%%%%%%%

The existence  and uniqueness of $K$ and $\mu$ are standard results \cite{Hutchinson}. Recall that due to a result by Feng and Hu \cite{FH} any self-similar measure is exact dimensional. 

The essential Hausdorff contents of a self-similar measure  $\mu$ can be estimated quite precisely. 
\begin{theoreme}
\label{contss}
Let $S$ be a self-similar IFS of $\R^d$. Let $K$ be the attractor of $S$. Let~$\mu$ be a self-similar measure  associated with $S$. For any $0\leq s<\dim(\mu)$, there exists a constant $c=c(d,\mu,s)>0 $ depending on the dimension $d$, $\mu$ and  $s$  only, such that for any ball $B=B(x,r)$ centered on $K$ and $r\leq 1$, any open set $\Omega$, one has 
\begin{align}
\label{genhcontss}
&c(d,\mu,s)\vert B\vert ^{s}\leq\mathcal{H}^{\mu, s }_{\infty}(\widering{B})\leq  \mathcal{H}^{ \mu, s}_{\infty}(B)\leq\vert B\vert ^{s}\text{ and } \nonumber\\
&c(d,\mu,s)\mathcal{H}^{s}_{\infty}(\Omega \cap K)\leq \mathcal{H}^{\mu,s}_{\infty}(\Omega)\leq \mathcal{H}^{s}_{\infty}(\Omega \cap K).
\end{align}
For any $s>\dim(\mu)$, $ \mathcal{H}^{\mu,s}_{\infty}(\Omega)=0.$

\end{theoreme}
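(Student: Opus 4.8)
The plan is to exploit the self-similarity of $\mu$ together with the exact dimensionality result of Feng--Hu, and to transfer the estimates back and forth between the $\mu$-essential Hausdorff content and the ordinary Hausdorff content of the trace on $K$. The upper bound $\mathcal{H}^{\mu,s}_\infty(B)\le |B|^s$ is immediate since $B$ covers itself; likewise $\mathcal{H}^{\mu,s}_\infty(\widering B)\le\mathcal{H}^{\mu,s}_\infty(B)$ follows from monotonicity, and $\mathcal{H}^{\mu,s}_\infty(\Omega)\le\mathcal{H}^s_\infty(\Omega\cap K)$ because $\mu$ is supported on $K$, so any cover of $\Omega\cap K$ is an admissible competitor for the essential content of $\Omega$ (one takes $E=\Omega\cap K$, which has full $\mu$-measure in $\Omega$). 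The vanishing statement for $s>\dim(\mu)$ follows from exact dimensionality: for $\mu$-a.e.\ $x$ one has $\mu(B(x,\rho))\ge\rho^{s}$ for all small $\rho$, and a standard mass-distribution / Vitali covering argument (applied to a full-measure subset) forces $\mathcal{H}^s_\infty$ of that subset to be $0$, hence $\mathcal{H}^{\mu,s}_\infty(\Omega)=0$.

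The substance is the two lower bounds, which are really the same statement: it suffices to prove that there is $c=c(d,\mu,s)>0$ with $\mathcal{H}^{\mu,s}_\infty(\Omega)\ge c\,\mathcal{H}^s_\infty(\Omega\cap K)$ for every open $\Omega$; applying this to $\Omega=\widering B$ and using that $\mathcal{H}^s_\infty(\widering B\cap K)\gtrsim |B|^s$ for balls centered on $K$ (again by exact dimensionality, since $\dim_H K=\dim_H(\mu\!\restriction\! \text{a.e.})\ge s$ locally on $K$, so the $s$-content of a relative ball of $K$ is comparable to $|B|^s$, with a uniform constant coming from the bounded geometry of the finite IFS) yields the ball estimate. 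To get $\mathcal{H}^{\mu,s}_\infty(\Omega)\ge c\,\mathcal{H}^s_\infty(\Omega\cap K)$, let $E\subset\Omega$ with $\mu(E)=\mu(\Omega)$ and let $\{B_j\}$ be any countable cover of $E$ by balls. I would show that $\sum_j|B_j|^s\ge c\,\mathcal{H}^s_\infty(\Omega\cap K)$. The key is a \emph{uniform lower regularity} of $\mu$ at scale $s$: because $s<\dim(\mu)$ and $\mu$ is self-similar (exact dimensional with dimension $\dim(\mu)$), there is a constant $\kappa=\kappa(d,\mu,s)>0$ and, crucially, a way to reduce to scales where $\mu(B(x,r)\cap K)\le\kappa^{-1} r^{s}$ for balls centered on $K$ --- this is where self-similarity enters, via the symbolic coding $f_{i_1}\circ\cdots\circ f_{i_k}$ and the fact that the cylinder measures $p_{i_1}\cdots p_{i_k}$ decay like (contraction ratio)$^{\dim\mu}$, comparing favorably against (contraction ratio)$^s$. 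One then splits the cover $\{B_j\}$ into those balls meeting $K$ and those not; only the former contribute to covering $E$ up to a $\mu$-null set (since $\mu$ lives on $K$ and $\mu(E)=\mu(\Omega)$), and for each such $B_j$ one has $\mu(B_j)\le\mu(5B_j)\lesssim |B_j|^{s}$ after possibly inflating and using the regularity bound. Summing, $\sum_j|B_j|^s\gtrsim\sum_j\mu(B_j)\ge\mu\big(\bigcup_j B_j\big)\ge\mu(E)=\mu(\Omega)$; but this only gives a bound by $\mu(\Omega)$, not by $\mathcal{H}^s_\infty(\Omega\cap K)$, so one must instead run this argument \emph{locally}, inside each ball of a near-optimal cover of $\Omega\cap K$, or equivalently compare the two contents through the auxiliary measure $\nu=\mathcal{H}^s_\infty$-derived mass distribution on $\Omega\cap K$ (a Frostman-type measure for the content) and use that $\nu\ll\mu$ up to the regularity constant; Theorem 3.1-type content manipulations from \cite{KR} can be invoked here.

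The main obstacle I anticipate is making the lower regularity estimate for $\mu$ \emph{uniform in the ball}, i.e.\ producing the single constant $c(d,\mu,s)$ valid for \emph{all} balls $B$ centered on $K$ with $r\le 1$ and all open $\Omega$, rather than a constant that degenerates as $r\to 0$ or as $\Omega$ becomes complicated. Exact dimensionality only gives pointwise, eventually-valid bounds with $\mu$-dependent thresholds; upgrading to a uniform statement requires the self-similar structure --- one covers $K$ by finitely many cylinder pieces of controlled size, uses the natural bijection between balls and cylinders (valid up to bounded multiplicity even \emph{without} any separation condition, which is the delicate point the paper emphasizes it does not want to assume), and absorbs the overlap multiplicity into the constant. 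Handling arbitrary overlaps (no open set condition) means I cannot use the usual one-to-one correspondence between cylinders and their images; instead I would quantify, via a pigeonhole/counting argument on the symbolic space at a fixed generation, how many cylinders of comparable diameter can meet a given small ball, and show this count is controlled in terms of $d$ and the contraction ratios only --- this combinatorial overlap bound, combined with the measure decay $p_{i_1}\cdots p_{i_k}\asymp(\text{diam})^{\dim\mu}$ and the gap $\dim\mu-s>0$, is the technical heart of the argument and is presumably where Section \ref{sec-example} does its real work.
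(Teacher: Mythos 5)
The easy parts of your plan (the three upper bounds, and the vanishing for $s>\dim(\mu)$) are fine and match the paper. The gap is at what you yourself call the technical heart. Your route rests on three claims that fail for general self-similar measures with overlaps: (i) a \emph{uniform} upper regularity $\mu(B(x,r))\leq \kappa^{-1}r^{s}$ for all $x\in K$ and all small $r$ is false, because the local dimension of a self-similar measure typically ranges over a nontrivial interval whose left endpoint can lie below $s<\dim(\mu)$ (this already fails under the open set condition, uniformly in $x$); (ii) the decay $p_{i_1}\cdots p_{i_k}\asymp c_{\underline{i}}^{\dim\mu}$ holds only for very special weights, not for a general self-similar measure, and with overlaps $\dim(\mu)$ need not even equal the entropy/Lyapunov ratio; (iii) most seriously, the pigeonhole bound on the number of cylinders of comparable diameter meeting a small ball, ``controlled in terms of $d$ and the contraction ratios only,'' is simply false without a separation hypothesis --- with exact or near-exact overlaps, exponentially many cylinders of a given generation can coincide or cluster in one spot. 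Since the whole point of the theorem is to assume no separation condition, this counting step cannot be repaired as stated. The subsidiary claim that a Frostman measure $\nu$ for $\mathcal{H}^{s}_{\infty}(\Omega\cap K)$ satisfies $\nu\ll\mu$ is also unsupported and is not needed in a correct argument.

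The paper circumvents all three points by a different mechanism, and in the reverse order. It first proves the ball estimate (Proposition \ref{autosim2}): fix the set $E=E_{\mu}^{[\alpha,\alpha],\rho_\varepsilon,\varepsilon}$ (from Proposition \ref{mle}) of points where $\mu(B(x,r))\leq r^{\alpha-\varepsilon}$ for \emph{all} $r\leq\rho_\varepsilon$, with $\mu(E)\geq\frac12$; choose one word $\underline{i}$ with $c_{\underline{i}}\asymp r$ and $f_{\underline{i}}(K)\subset\widering{B}$, and transport $E$ into $B$ as $E_{\underline{i}}=f_{\underline{i}}(E)$, which enjoys the rescaled regularity with respect to $\mu_{\underline{i}}=\mu\circ f_{\underline{i}}^{-1}$ (not with respect to $\mu$). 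The overlaps are then handled not by counting cylinders but by the absolute continuity $\mu_{\underline{i}}\ll\mu$, obtained by iterating \eqref{def-ssmu2} with all $p_i>0$: any competitor $A\subset B$ with $\mu(A)=\mu(B)$ automatically has $\mu_{\underline{i}}(A)=1$, so a Frostman-type estimate of covers of $A\cap E_{\underline{i}}$ run against $\mu_{\underline{i}}$ (plus the bounded-multiplicity Lemma \ref{dimconst}, which is a statement about balls of equal radius and needs no separation) gives $\mathcal{H}^{s}_{\infty}(A)\geq c\,c_{\underline{i}}^{\,\alpha-\varepsilon}\geq c'\,|B|^{\alpha-\varepsilon}$ uniformly in $A$. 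Only then is the open-set inequality deduced, by a covering-surgery argument: given a near-optimal cover $(L_n)$ of a full-measure $E\subset\Omega$, the points of $K\cap\Omega$ missed by $\bigcup_n L_n$ are covered by Besicovitch families of balls inside $\Omega$, and for each such ball $L$ whose intersecting $L_n$'s are smaller, the ball estimate gives $\kappa|L|^{s}\leq\sum_{L_n\cap L\neq\emptyset}|L_n|^{s}$, so these $L_n$ can be replaced by $2L$ at bounded cost. Your missing ingredient is precisely this substitute (push-forward of a good set plus $\mu\circ f_{\underline{i}}^{-1}\ll\mu$) for the impossible uniform regularity and overlap count.
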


\begin{remarque}
(1) The system $S$ is not assumed to verify any separation condition.\mk

(2) In the special case of the Lebesgue measure restricted to  $[0,1]^d$ (or some cube $K$), \eqref{genhcontss} implies that  for any $0 \leq s\leq d$, the Lebesgue-essential $s$ dimensional Hausdorff content is strongly equivalent to the usual $s$-dimensional Hausdorff content (it is even possible to take the constant $c(d,\mu,s)$ in \eqref{genhcontss}, independent of~$s$), so that Theorem \ref{contss} together with Theorem \ref{lowani} in the this special case implies the main theorem of Koivusalo and Rams, \cite[Theorem 3.2]{KR}, recalled below.
\end{remarque}

\begin{theoreme}[\cite{KR}]
\label{lowkr}
Let  $(B_n)_{n\rightarrow+\infty}$ be a sequence of balls of $[0,1]^d$ verifying $\vert B_n \vert \to 0$ and $\mathcal{L}^d (\limsup_{n\rightarrow+\infty}B_n)=1$. 

Let $(U_n)_{n\in \mathbb{N}}$ be a sequence of open sets satisfying $U_n \subset B_n$. For any  $0\leq s \leq d$ such that, for all $n\in\mathbb{N}$ large enough, $\mathcal{H}^{s}_{\infty}(U_n)\geq \mathcal{L}^d(B_n)$, it holds that
\begin{equation}
\dim_H (\limsup_{n\rightarrow+\infty}U_n)\geq s.
\end{equation}
\end{theoreme}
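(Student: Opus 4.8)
\textbf{Proof proposal for Theorem \ref{lowkr} (the Koivusalo--Rams theorem) as a consequence of the present machinery.}

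The plan is to derive Theorem \ref{lowkr} by specializing Theorem \ref{lowani} to the case $\mu=\mathcal L^d|_{[0,1]^d}$ and the sequence $\mathcal U=(U_n)_{n\in\mathbb N}$. First I would observe that $\mu=\mathcal L^d|_{[0,1]^d}$ is a self-similar measure: take the IFS consisting of the $2^d$ homotheties of ratio $1/2$ mapping $[0,1]^d$ onto its dyadic subcubes, with equal weights $p_i=2^{-d}$; its attractor is $K=[0,1]^d$ and its invariant measure is exactly $\mathcal L^d|_{[0,1]^d}$, which is exact dimensional with $\dim(\mu)=\underline\dim_H(\mu)=d$. Hence Theorem \ref{contss} applies with this $\mu$, and since $K=[0,1]^d$ we get, for every $0\le s<d$ and every open set $\Omega$, the two-sided bound $c(d,\mu,s)\,\mathcal H^s_\infty(\Omega\cap[0,1]^d)\le \mathcal H^{\mu,s}_\infty(\Omega)\le \mathcal H^s_\infty(\Omega\cap[0,1]^d)$, and in particular $\mathcal H^{\mu,s}_\infty(U_n)\ge c(d,\mu,s)\,\mathcal H^s_\infty(U_n)$ for all $n$ (using $U_n\subset[0,1]^d$).

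Next I would translate the hypotheses of Theorem \ref{lowkr} into the hypotheses of Corollary \ref{zzani}. The assumption $\mathcal L^d(\limsup_n B_n)=1$ gives, via Lemma \ref{equiac}(2) applied with, say, $v=1/2$ — note $\mathcal L^d$ is doubling so $\limsup_n(vB_n)$ also has full measure when $\limsup_n B_n$ does, by a standard Lebesgue density / doubling argument — that $\mathcal B=(B_n)_{n\in\mathbb N}$ is $\mu$-a.c. (Alternatively one invokes directly the remark after Lemma \ref{equiac} that for doubling $\mu$ the $\mu$-a.c.\ property is equivalent to $\mu(\limsup B_n)=1$.) Now fix $s\le d$ as in Theorem \ref{lowkr}; if $s=d$ a limiting argument over $s'\uparrow d$ reduces to the case $s<d$, so assume $s<d$. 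For all large $n$ we have $\mathcal H^s_\infty(U_n)\ge \mathcal L^d(B_n)$ by hypothesis, hence $\mathcal H^{\mu,s}_\infty(U_n)\ge c(d,\mu,s)\,\mathcal L^d(B_n)=c(d,\mu,s)\,\mu(B_n)$. Since $|B_n|\to0$ we have $\mu(B_n)\to 0$, so $\log\mu(B_n)\to-\infty$, and therefore
\[
\limsup_{n\to+\infty}\frac{\log\mathcal H^{\mu,s}_\infty(U_n)}{\log\mu(B_n)}\le \limsup_{n\to+\infty}\frac{\log\big(c(d,\mu,s)\mu(B_n)\big)}{\log\mu(B_n)}=1,
\]
because $\frac{\log c + \log\mu(B_n)}{\log\mu(B_n)}=1+\frac{\log c}{\log\mu(B_n)}\to 1$. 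This is exactly the hypothesis of Corollary \ref{zzani} (with $s\le\underline\dim_H(\mu)=d$), which then yields $s(\mu,\mathcal B,\mathcal U)\ge s$ and hence $\dim_H(\limsup_n U_n)\ge s$, as claimed.

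The only genuinely substantive input here is Theorem \ref{contss}, whose proof (carried out later in the paper) is where the real work lies; everything in the present derivation is bookkeeping. The one point requiring a little care is the reduction of the boundary case $s=d$ to $s<d$: I would note that for each $s'<d$ the hypothesis $\mathcal H^{s}_\infty(U_n)\ge\mathcal L^d(B_n)$ for large $n$ implies $\mathcal H^{s'}_\infty(U_n)\ge \mathcal H^{s}_\infty(U_n)\ge\mathcal L^d(B_n)$ as well (since $\mathcal H^{s'}_\infty\ge\mathcal H^{s}_\infty$ when $s'\le s$, after possibly replacing $U_n$ by $U_n\cap B_n$ to keep diameters bounded — irrelevant at scale $\infty$ up to the constant $|B_n|^{s-s'}\le 1$ for $n$ large), conclude $\dim_H(\limsup_n U_n)\ge s'$ for every $s'<d$, and let $s'\uparrow d$. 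A second minor point is checking that $c(d,\mu,s)$ may indeed be taken $s$-independent in this Lebesgue case, as asserted in the remark following Theorem \ref{contss}; but even without that refinement the argument above goes through since $c(d,\mu,s)$ is a fixed positive constant for each fixed $s$, which is all that is used. Thus I expect no real obstacle: the theorem follows by assembling Theorem \ref{contss}, Lemma \ref{equiac}, and Corollary \ref{zzani}.
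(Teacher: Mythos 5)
Your derivation is correct, and it is essentially the route the paper itself has in mind: Theorem \ref{lowkr} is quoted from \cite{KR}, and the remark following Theorem \ref{contss} sketches exactly this argument (Lebesgue measure on $[0,1]^d$ as a self-similar measure, equivalence of $\mathcal H^{\mu,s}_\infty$ and $\mathcal H^s_\infty$, then Corollary \ref{zzani}, with the $\mu$-a.c.\ property supplied by the doubling case of Lemma \ref{equiac}). Your two refinements --- absorbing the constant $c(d,\mu,s)$ via the $\limsup$-of-log-ratios hypothesis of Corollary \ref{zzani}, and passing to $s'\uparrow d$ to cover the endpoint $s=d$ not reached by Theorem \ref{contss} --- are exactly the details needed, and for the $\mu$-a.c.\ step it is safer to invoke, as you do in the parenthetical, the stated equivalence for doubling measures rather than the informal density argument.
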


As a consequence of Theorem \ref{contss} and Corollary \ref{zzani}, one gets

\begin{corollary} 
\label{zzaniss}
Let $\mu\in\mathcal{M}(\R^d)$ be a self-similar measure and $\mathcal{B}= (B_n)_{n\in\mathbb{N}}$ be a   $\mu$-a.c. sequence of  closed balls of~$\R^d$ centered in $\supp (\mu)$.
Let $\mathcal{U}=(U_n)_{n\in\mathbb{N}}$ be a sequence of open sets such that 
$U_n \subset B_n$ for all $n\in \mathbb N$, and $0\leq s\leq \dim(\mu)$. If, for $n\in\mathbb{N}$ large enough, $\mathcal{H}^{\mu,s}_{\infty}(U_n)\geq \mu(B_n) $, then 
$$\dim_{H}(\limsup_{n\rightarrow +\infty}U_n)\geq s .
$$

\end{corollary}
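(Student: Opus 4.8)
\textbf{Proof plan for Corollary \ref{zzaniss}.}
The statement is essentially a specialization of Corollary \ref{zzani} obtained by feeding in the explicit content estimates for self-similar measures from Theorem \ref{contss}. So the plan is simply to reduce to Corollary \ref{zzani} and check that its hypotheses are met. First I would fix $s$ with $0\le s\le\dim(\mu)$, and observe that since $\mu$ is self-similar, it is exact dimensional (Feng–Hu), so $\dimm(\mu)=\overline{\dim}_P(\mu)=\dim(\mu)$; in particular $0\le s\le\dimm(\mu)$, which is the range required by Corollary \ref{zzani}. The remaining hypothesis of Corollary \ref{zzani} is that
$$
\limsup_{n\to+\infty}\frac{\log\mathcal H^{\mu,s}_\infty(U_n)}{\log\mu(B_n)}\le 1,
$$
and I would derive this from the assumed inequality $\mathcal H^{\mu,s}_\infty(U_n)\ge\mu(B_n)$ for all large $n$.

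The one subtlety — which I expect to be the only real point to watch — is the direction of the inequality once one takes logarithms: $\mu(B_n)\in(0,1)$ for $n$ large (because $|B_n|\to 0$ and, the balls being centered in $\supp(\mu)$, we may also note $\mu(B_n)>0$), so $\log\mu(B_n)<0$, and the content $\mathcal H^{\mu,s}_\infty(U_n)$ is likewise a quantity in $(0,1]$ for $n$ large — here I would invoke the upper bound $\mathcal H^{\mu,s}_\infty(U_n)\le\mathcal H^{\mu,s}_\infty(B_n)\le|B_n|^s\to 0$ from \eqref{genhcontss} in Theorem \ref{contss} to guarantee $\log\mathcal H^{\mu,s}_\infty(U_n)<0$ eventually. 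Then $0<\mathcal H^{\mu,s}_\infty(U_n)\ge\mu(B_n)$ with both sides in $(0,1)$ forces $\log\mathcal H^{\mu,s}_\infty(U_n)\ge\log\mu(B_n)$, i.e. (dividing by the negative number $\log\mu(B_n)$ and reversing the inequality) $\frac{\log\mathcal H^{\mu,s}_\infty(U_n)}{\log\mu(B_n)}\le 1$ for all large $n$. Passing to the $\limsup$ gives exactly the hypothesis of Corollary \ref{zzani}.

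Finally, applying Corollary \ref{zzani} with this $s$ yields $s(\mu,\mathcal B,\mathcal U)\ge s$ and hence $\dim_H(\limsup_{n\to+\infty}U_n)\ge s$, which is the claimed conclusion. The case $s=0$ is trivial since Hausdorff dimension is always nonnegative, and the case $s=\dim(\mu)$ is covered by the same argument (one uses $s\le\dimm(\mu)$, which holds with equality there). Thus no genuine obstacle arises; the proof is a short bookkeeping argument, the only care being the sign manipulation when taking logarithms of quantities less than $1$, for which Theorem \ref{contss} supplies precisely the needed upper bound on $\mathcal H^{\mu,s}_\infty(U_n)$.
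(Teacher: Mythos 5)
Your proposal is correct and matches the paper's intended derivation: the paper states Corollary \ref{zzaniss} precisely as a consequence of Corollary \ref{zzani} (with $\dimm(\mu)=\dim(\mu)$ by exact dimensionality) together with the content bounds of Theorem \ref{contss}, and gives no further proof. Your sign bookkeeping when passing from $\mathcal{H}^{\mu,s}_{\infty}(U_n)\geq \mu(B_n)>0$ to $\limsup_{n\to+\infty}\frac{\log\mathcal{H}^{\mu,s}_{\infty}(U_n)}{\log\mu(B_n)}\leq 1$ is exactly the needed (and correct) verification.
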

One also emphasizes that  in the case of a self-similar measure, conversely, any $s\geq 0$ such that $\mathcal{H}^{\mu,s}_{\infty}(U_n)\leq \mu(B_n)$ for every $n$ large enough is an upper-bound for $\dim_H (\limsup_{n\rightarrow+\infty}U_n)$   if $\mathcal{B}$ verifies that, for any $p \in\mathbb{N}$, the balls $B_n$ with $\vert B_n \vert \approx 2^{-p}$ does not overlap too much. More precisely, in the companion paper of the present article, \cite{ED2}, the following result is proved.  
\begin{theoreme}[\cite{ED2}]
\label{majoss}
Let $\mu \in \mathcal{M}(\R^d)$ be a self-similar measure, $K$ its support and $(B_n)_{n\rightarrow+\infty}$ be a weakly redundant sequence of balls of $\R^d$ (see \cite[Definition 1.5]{BS2} ) verifying $\vert B_n \vert \to 0$ and, for any $n\in\mathbb{N}$, $B_n \cap K \neq \emptyset$. Let $(U_n)_{n\in \mathbb{N}}$ be a sequence of open sets satisfying $U_n \subset B_n$. For any  $0\leq s <\dim (\mu)$ such that, for all large enough  $n\in\mathbb{N}$, $\mathcal{H}^{\mu,s}_{\infty}(U_n)\leq \mu(B_n)$, it holds that
\begin{equation}\label{uppernound18}
\dim_H (\limsup_{n\rightarrow+\infty}U_n )\leq s.
\end{equation}

%Also, for any $\delta\ge 1$, one has 
%\begin{equation}\label{ineg}
%\dim_H (\limsup_{n\rightarrow+\infty}B_n ^{\delta})\leq \frac{  \limsup_{n\rightarrow+\infty}\frac{\log \mu(B_n )}{\log(\vert B_n \vert)}}{\delta}.
%\end{equation} 
\end{theoreme}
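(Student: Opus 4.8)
\textbf{Proof plan for Theorem \ref{majoss}.}

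The goal is an upper bound, so the strategy is to produce, for each small scale, an efficient cover of $\limsup_{n\to\infty} U_n$ witnessing that its $s'$-dimensional Hausdorff measure is zero for every $s'>s$. The hypothesis $\mathcal{H}^{\mu,s}_\infty(U_n)\le \mu(B_n)$ means that each $U_n$ contains a subset $E_n\subset U_n$ with $\mu(E_n)=\mu(U_n)$ and $\mathcal{H}^s_\infty(U_n\setminus(U_n\setminus E_n))$... more usefully: there is a cover of a full-$\mu$-measure piece of $U_n$ by balls whose $s$-content sum is $\le 2\mu(B_n)$, say. Since $\mu$ is self-similar hence exact dimensional with $\dim(\mu)>s$, balls of radius $\rho$ centered on $K$ have $\mu$-mass roughly $\rho^{\dim(\mu)}$, which decays strictly faster than $\rho^{s}$; this is the key quantitative input that will let the content bound $\mu(B_n)$ be converted into a genuinely small $s$-content cover after discarding a small-mass remainder.

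First I would use weak redundancy to group the indices by dyadic scale: for each $p$, the balls $B_n$ with $|B_n|\approx 2^{-p}$ can be split into at most $\kappa_p$ families of pairwise disjoint balls, with $\kappa_p = o(2^{p\eta})$ for every $\eta>0$ (this is the content of the weak redundancy definition). Then I would cover $\limsup_{n} U_n$ by $\bigcup_{n\ge n_0} U_n$ for large $n_0$, and for each such $n$ replace $U_n$ by: (a) a near-optimal $\mathcal{H}^s_\infty$-cover $\mathcal{C}_n$ of a subset $E_n$ of $U_n$ with $\mu(E_n)=\mu(U_n)$ and $\sum_{B\in\mathcal{C}_n}|B|^s\le 2\mu(B_n)$, plus (b) the ``bad'' remainder $U_n\setminus E_n$, which has $\mu$-measure zero. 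The part (b) contributes nothing to $\limsup$ in $\mu$-measure, but one must still cover it geometrically: here I would instead argue that $\limsup U_n \subset \limsup E_n \cup (\text{a }\mu\text{-null set that is also a }\limsup)$, and handle the null part by noting that on a self-similar attractor a $\mu$-null $\limsup$ of sets inside balls of radius $\to 0$ still has controlled dimension — actually the cleaner route is to choose $E_n$ closed so that $U_n\setminus E_n$ is relatively open in $U_n$, and observe that since we only need an upper bound for the dimension of the limsup, it suffices to bound $\dim_H(\limsup E_n)$ and separately check that the leftover contributes dimension $\le s$ too; but in fact by redefining $s_\mu(B_n,U_n)$ through the essential content one can absorb this, so I would lean on Definition \ref{mucont} directly and work with the essential content throughout rather than peeling off a remainder by hand.

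The heart of the estimate: fix $s'>s$ and a target scale $\varepsilon>0$. For each dyadic $p\ge p_0(\varepsilon)$, and each of the $\kappa_p$ disjoint subfamilies $\mathcal{F}$ of scale-$p$ balls, I would sum over $B_n\in\mathcal{F}$ the quantity $\sum_{B\in\mathcal{C}_n}|B|^{s'}$. Splitting $\mathcal{C}_n$ into balls of diameter $\ge 2^{-p}$ and balls of diameter $<2^{-p}$: for the large balls, $|B|^{s'}\le |B|^s\cdot 2^{-p(s'-s)}$... this doesn't immediately close. Instead the right bound uses that $\sum_{B\in\mathcal{C}_n}|B|^s\le 2\mu(B_n)$ together with disjointness of the $B_n$ in $\mathcal{F}$: $\sum_{B_n\in\mathcal{F}}\mu(B_n)\le 1$. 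So within one subfamily the total $s$-content is $\le 2$, and because each $B_n$ has $|B_n|\approx 2^{-p}$ and sits on $K$ where $\mu(B_n)\gtrsim 2^{-p\dim(\mu)}$, we also get $\#\mathcal{F}\lesssim 2^{p\dim(\mu)}$; combining, the $s'$-content contributed by $\mathcal{F}$ is at most $2^{-p(s'-s)}$ times something polynomial in $2^p$, summable over $p$ once $s'-s$ exceeds the polynomial exponent — and here is where one needs $s<\dim(\mu)$ sharply and the $o(2^{p\eta})$ redundancy bound, so that the loss from $\kappa_p$ subfamilies and from converting mass to content is negligible and $\sum_p \kappa_p \cdot 2^{-p(s'-s)+p\cdot 0^+} <\infty$.

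\textbf{Main obstacle.} The delicate point is the bookkeeping in the last paragraph: making precise that the passage from $\sum|B|^s\le\mu(B_n)$ to a summable $\sum|B|^{s'}$ bound genuinely only costs a subexponential-in-$p$ factor. One has to treat the balls in $\mathcal{C}_n$ that are \emph{larger} than $B_n$ separately (these encode the ``high-scale'' information the authors emphasize), and show they can be reorganized — e.g. by replacing each large ball by its intersection pattern with the scale-$p$ grid, or by a stopping-time argument along the self-similar tree — without blowing up the content; the exact dimensionality and homogeneity of $\mu$ under $S$ is exactly what makes this reorganization possible. Getting the quantifiers right (choosing $n_0$, then $p_0$, then the $s'\downarrow s$ limit, uniformly in the weak-redundancy constants) is the part I expect to require the most care, and it is presumably carried out in detail in \cite{ED2}.
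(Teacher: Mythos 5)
Your overall skeleton (dyadic grouping via weak redundancy, disjointness giving $\sum_{B_n\in\mathcal F}\mu(B_n)\le 1$, the factor $|B_n|^{s'-s}\approx 2^{-p(s'-s)}$, then $\sum_p \kappa_p 2^{-p(s'-s)}<\infty$) is the right outer layer, but there is a genuine gap at the heart of the argument, precisely at the point you try to wave away. The hypothesis $\mathcal H^{\mu,s}_\infty(U_n)\le\mu(B_n)$ only gives cheap covers of \emph{some} full-$\mu$-measure subset $E_n\subset U_n$; such covers need not contain the points of $U_n\cap K$ lying in the $\mu$-null leftover, and a point of $\limsup_n U_n$ may sit in these leftovers for every $n$, so ``$\limsup U_n\subset\limsup E_n\cup(\mu\text{-null limsup})$'' does not bound anything: a $\mu$-null limsup can a priori have dimension much larger than $s$, and ``leaning on Definition \ref{mucont} directly'' does not produce an actual cover of the limsup set. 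The missing ingredient is exactly Theorem \ref{contss}: for a self-similar $\mu$ and $0\le s<\dim(\mu)$ one has $c(d,\mu,s)\,\mathcal H^{s}_\infty(\Omega\cap K)\le\mathcal H^{\mu,s}_\infty(\Omega)$, so the hypothesis upgrades to $\mathcal H^{s}_\infty(U_n\cap K)\le c^{-1}\mu(B_n)$, i.e.\ a cheap cover of \emph{every} point of $U_n\cap K$. Combined with the observation that $\limsup_n U_n\subset K$ (since $|B_n|\to 0$ and $B_n\cap K\ne\emptyset$, any point at positive distance from $K$ lies in only finitely many $B_n$), hence $\limsup_n U_n=\limsup_n(U_n\cap K)$, this is where $s<\dim(\mu)$ and the self-similar structure genuinely enter — not through pointwise estimates $\mu(B(x,\rho))\approx\rho^{\dim\mu}$, which are false for a general self-similar measure (exact dimensionality is an a.e.\ statement, not a uniform ball estimate), so your bounds $\mu(B_n)\gtrsim 2^{-p\dim\mu}$ and $\#\mathcal F\lesssim 2^{p\dim\mu}$ are unjustified and, fortunately, unnecessary.

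Two further corrections to the bookkeeping. First, the ``main obstacle'' you flag — covering balls of $\mathcal C_n$ larger than $B_n$ — is a non-issue: since $U_n\subset B_n$, any covering ball $L$ with $|L|\ge|B_n|$ may simply be replaced by $B_n$ itself, which does not increase $\sum|L|^s$; after this replacement all covering balls have diameter $\le|B_n|\to 0$, so the covers are automatically fine and $\sum_{L\in\mathcal C_n}|L|^{s'}\le |B_n|^{s'-s}\sum_{L\in\mathcal C_n}|L|^{s}\lesssim 2^{-p(s'-s)}\mu(B_n)$ with no reorganization along the self-similar tree needed. Second, your phrase ``summable over $p$ once $s'-s$ exceeds the polynomial exponent'' would only yield $\dim_H\le s+(\text{that exponent})$; in the correct accounting there is no polynomial loss at all: within one disjoint subfamily the masses sum to at most $1$, and weak redundancy gives $\kappa_p=o(2^{p\eta})$ for every $\eta>0$, so $\sum_p\kappa_p2^{-p(s'-s)}<\infty$ for every $s'>s$, whence $\mathcal H^{s'}(\limsup_n U_n)=0$ and $\dim_H(\limsup_n U_n)\le s$ after letting $s'\downarrow s$. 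Note finally that the paper itself does not prove this statement but quotes it from \cite{ED2}; within this paper's framework the route just described (Theorem \ref{contss} plus weak redundancy) is the intended one.
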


Combining Theorem~\ref{lowani} and Corollary~\ref{minoeffec} with Theorem~\ref{contss} and Lemma~\ref{equiac} yield the following consequence for self-similar measures.

%%%%%%%%%%%%%%%%%%%%%%%%%%%%%%%%%
\begin{theoreme} 
\label{prop-ss}
Let $S$ be a self-similar  IFS of $\mathbb{R}^d$ with attractor $K$  and $\mu$ be a self-similar measure  associated with $S$. Let $(B_{n})_{n\in\mathbb{N}}$ be a sequence of closed balls centered on $K$, such that $\lim_{n\to+\infty} |B_{n}| = 0$.

\begin{enumerate}
\item Suppose that $(B_n)_{n\in\N}$ is $\mu $-a.c. Then $
t\big (\mu,\delta ,(B_n)_{n\in\mathbb{N}}\big )\le \dim(\mu)$; consequently $s\big (\mu, (B_n)_{n\in\N},(\widering B_n ^{\delta})_{n\in\N}\big ) \ge \frac{\dim(\mu)}{\delta}$ and there exists a gauge function~$\zeta$ such that $\lim_{r\to 0^+}\frac{\log(\zeta(r))}{\log(r)}\ge \frac{\dim(\mu)}{\delta}$ and $\mathcal H^\zeta(\limsup_{n\to\infty}  \widering B_n ^{\delta}) >0$.  In particular 
\begin{equation}\label{weakversion}
\dim_{H}(\limsup_{n\rightarrow+\infty}\widering B_n ^{\delta})\geq \frac{\dim(\mu)}{\delta} .
\end{equation}

\item Suppose that $\mu(\limsup_{n\rightarrow+\infty}B_n)=1$. Then,  \eqref{weakversion} still holds but the existence of the gauge function is not ensured. Furthermore if $\mu$ is doubling,  then $(B_n)_{n\in\N}$ is $\mu$-a.c, so that the conclusion of  item (1) holds.
\end{enumerate}
\end{theoreme}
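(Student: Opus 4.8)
The plan is to assemble four ingredients already at our disposal: the two-sided content estimate of Theorem~\ref{contss}, the effective lower bound of Corollary~\ref{minoeffec} (which itself rests on Theorem~\ref{lowani} through Corollary~\ref{zzani}), the gauge-function conclusion of Theorem~\ref{lowani}, and the two items of Lemma~\ref{equiac}. I will use that a self-similar measure is exact dimensional by the Feng--Hu theorem, so $\dimm(\mu)=\overline{\dim}_P(\mu)=\dim(\mu)$; one assumes $\dim(\mu)>0$, the degenerate case $\dim(\mu)=0$ being vacuous or excluded exactly as in the Remark following Theorem~\ref{lowani}. One may freely discard the finitely many indices with $r_n\ge 1$ (this changes neither $\limsup_n$ nor the $\mu$-a.c.\ property, both being tail properties), so that for every remaining $n$ the set $\widering B_n^{\delta}=B(x_n,r_n^{\delta})$ is an open ball, centered on $K$, of radius $\le 1$, and contained in $B_n$; in particular Theorem~\ref{contss} applies to it.

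For item~(1) the first and only substantial step is to evaluate $t(\mu,\delta,\mathcal B)$. Fixing $\delta\ge 1$ and, for small $\varepsilon>0$, $s=\dim(\mu)-\varepsilon<\dim(\mu)$, the ball part of~\eqref{genhcontss} applied to $\widering B_n^{\delta}=B(x_n,r_n^{\delta})$ reads $c(d,\mu,s)\,|B_n^{\delta}|^{s}\le \mathcal H^{\mu,s}_\infty(\widering B_n^{\delta})\le |B_n^{\delta}|^{s}$; dividing the logarithms by $\log|B_n^{\delta}|\to-\infty$ and using $\log c(d,\mu,s)/\log|B_n^{\delta}|\to 0$ gives $t(\mu,\delta,\varepsilon,\mathcal B)=\dim(\mu)-\varepsilon$, hence $t(\mu,\delta,\mathcal B)=\dim(\mu)$ after letting $\varepsilon\to0$. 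Feeding this into $s_\delta$ of Corollary~\ref{minoeffec} (licit since $\dimm(\mu)=\dim(\mu)>0$) yields $s_\delta=\dim(\mu)/\delta$, whence $s\big(\mu,(B_n)_n,(\widering B_n^{\delta})_n\big)\ge\dim(\mu)/\delta$. Since $s\big(\mu,\mathcal B,(\widering B_n^{\delta})_n\big)\le\overline{\dim}_H(\mu)=\dimm(\mu)$ by Remark~(2) after Theorem~\ref{lowani}, the minimum occurring there equals $s\big(\mu,\mathcal B,(\widering B_n^{\delta})_n\big)\in[\dim(\mu)/\delta,\dim(\mu)]$, which is positive; Theorem~\ref{lowani} applied to $\mathcal U=(\widering B_n^{\delta})_n$ then produces a gauge $\zeta$ with $\lim_{r\to0^+}\log\zeta(r)/\log r=s\big(\mu,\mathcal B,(\widering B_n^{\delta})_n\big)\ge\dim(\mu)/\delta$ and $\mathcal H^\zeta(\limsup_n \widering B_n^{\delta})>0$, and finally $\mathcal H^\zeta(E)>0$ forces $\dim_H(E)\ge\lim_{r\to0^+}\log\zeta(r)/\log r$, giving~\eqref{weakversion}.

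For item~(2) one has only $\mu(\limsup_n B_n)=1$, and the idea is a rescaling reducing to item~(1). Fix $\gamma>1$ and enlarge each ball to $\widetilde B_n=B(x_n,r_n^{1/\gamma})\supset B_n$. For any $v\in(0,1)$ one has $v\,r_n^{1/\gamma}\ge r_n$ for all large $n$ (because $r_n\to0$ and $1-1/\gamma>0$), so $vB(x_n,r_n^{1/\gamma})\supset B_n$ eventually and hence $\mu\big(\limsup_n vB(x_n,r_n^{1/\gamma})\big)\ge\mu(\limsup_n B_n)=1$; by Lemma~\ref{equiac}(2), $(\widetilde B_n)_n$ is $\mu$-a.c. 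Applying item~(1) to $(\widetilde B_n)_n$ with exponent $\gamma\delta\ge1$ and observing that $\widering{\widetilde B}_n^{\,\gamma\delta}=B(x_n,r_n^{\delta})=\widering B_n^{\delta}$ gives $\dim_H(\limsup_n \widering B_n^{\delta})\ge\dim(\mu)/(\gamma\delta)$; letting $\gamma\to1^+$ proves~\eqref{weakversion}. No single gauge function survives this limiting procedure, which is why its existence is not asserted under the weaker hypothesis. Finally, if $\mu$ is doubling then, by the equivalence recalled just after Lemma~\ref{equiac} (following the proof of \cite[Lemma 5]{BV}), the hypothesis $\mu(\limsup_n B_n)=1$ already forces $(B_n)_n$ to be $\mu$-a.c., so item~(1) applies verbatim.

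I expect the only genuinely non-routine point to be the exact identity $t(\mu,\delta,\mathcal B)=\dim(\mu)$; everything else is assembly. It is here that Theorem~\ref{contss} is indispensable and that the absence of any separation hypothesis on $S$ is felt: the upper bound $\mathcal H^{\mu,s}_\infty(\widering B_n^{\delta})\le|B_n^{\delta}|^{s}$ prevents $t$ from exceeding $\dim(\mu)$, while the matching lower bound $c(d,\mu,s)|B_n^{\delta}|^{s}\le\mathcal H^{\mu,s}_\infty(\widering B_n^{\delta})$ with $c(d,\mu,s)$ independent of $n$ prevents it from being smaller --- and it is precisely this two-sidedness that upgrades the generic value $s_\delta$ to the sharp exponent $\dim(\mu)/\delta$.
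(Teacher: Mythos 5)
Your proof is correct and relies on the same ingredients as the paper's own argument (the ball estimate of Theorem~\ref{contss}/Proposition~\ref{autosim2} for balls centered on $K$, Corollary~\ref{minoeffec}, Theorem~\ref{lowani}, and Lemma~\ref{equiac}). The only real difference is the organization of item~(2): the paper inflates each ball by a constant factor $v>1$, applies the content estimate to $\widering{(vB_n)^{\delta}}$, and then uses the inclusion $\limsup_n (vB_n)^{\delta}\subset\limsup_n B_n^{\delta-\varepsilon'}$ followed by a second limit $\varepsilon'\to 0$; you instead inflate the exponent, $\widetilde B_n=B(x_n,r_n^{1/\gamma})$, observe that $\widetilde B_n^{\gamma\delta}=B_n^{\delta}$ exactly, invoke Lemma~\ref{equiac}(2) to get that $(\widetilde B_n)_n$ is $\mu$-a.c., and let $\gamma\to 1^+$ --- a slightly cleaner reduction of item~(2) to item~(1) which in addition yields the open-ball version \eqref{weakversion} directly, whereas for item~(1) you apply the content bounds to $\widering B_n^{\delta}$ itself, exactly as the paper does implicitly. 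One small slip in your closing commentary: since $\log|B_n^{\delta}|<0$, it is the lower content bound $c(d,\mu,s)\,|B_n^{\delta}|^{s}\le\mathcal H^{\mu,s}_{\infty}(\widering B_n^{\delta})$ that prevents $t(\mu,\delta,\mathcal B)$ from exceeding $\dim(\mu)$ (the direction actually needed to get $s_\delta\ge\dim(\mu)/\delta$), while the trivial upper bound $\mathcal H^{\mu,s}_{\infty}(\widering B_n^{\delta})\le|B_n^{\delta}|^{s}$ is what keeps $t$ from being smaller; you attribute the two roles the other way around, but the computation in the body of your proof is carried out correctly, so this does not affect the argument.
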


% 
%%%%%%%%%%%%%%%%%%%%%%%%%%%
% 
%
%
%%%%%%%%%%%%%%%%%%%%%%%%%%%
%
%
%
%
%
%
%

\begin{remarque}
Since no separation condition is assumed about the system $S$, Theorem \ref{prop-ss} implies \cite[Theorem 1.6]{BS2} in the special case where the sequence of measures $(\mu_p )_{p\in\mathbb{N}}$ is  constant and equal to some self-similar measure with open set condition $\mu$ and the sequence of contraction ratio $(\delta_p)_{p\in\mathbb{N}}$ is constant as well. 
\end{remarque}

Corollary~\ref{zzani} and Theorem~\ref{contss}  also make it possible to deal with more general open sets $(U_n)_{n\ge 1}$ than the contracted balls $(\widering B_n^\delta)$, if one is able to compare efficiently the $s$-dimensional Hausdorff contents of the sets $U_n\cap K$ with a power of $|B_n|$. It is then convenient to assume that $K$ is the closure of its interior. Here is an example. 

Let $1\leq \tau_1 \leq ...\leq \tau_d$ be $d$ real numbers and $\tau=(\tau_1,...,\tau_d)$. One starts by defining a family of rectangles of $\mathbb{R}^d$ associated with $\tau$. 

 \begin{definition}
Let $1\leq \tau_1 \leq ... \leq \tau_{d} $ and $\tau=(\tau_1,...,\tau_d)$. For any $x=(x_i)_{1\leq i\leq d}\in \R^d$ and $r>0$, the  $\tau$-rectangle centered in $x$ and associated with  $r$ is defined by 
\begin{equation}
R_{\tau}(x,r)=\prod_{i=1}^d [x_i-\frac{1}{2}r^{\tau_i},x_i+\frac{1}{2}r^{\tau_i}].
\end{equation}  
\end{definition}
 \begin{theoreme}
 \label{rectss}
 Let $S$ be a self-similar IFS of $ \R^d$ such that the attractor $K$ is  equal to the closure of its interior. Let $\mu$ be a self-similar  measure associated with~$S$. Let $1\leq\tau_1\leq...\leq  \tau_d$, $\tau=(\tau_1, ...,\tau_d)$ and $(B_n :=B(x_n,r_n))_{n\in\mathbb{N}}$ be a sequence of balls of $\R^d$ satisfying $r_n \to 0$ and $\mu(\limsup_{n\rightarrow+\infty}B_n)=1.$ Define $R_n =\widering R_{\tau}(x_n,r_n).$ Then
 \begin{equation}
 \dim_H (\limsup_{n\rightarrow+\infty}R_n)\geq \min_{1\leq i\leq d }\left\{\frac{\dim(\mu)+\sum_{1\leq j\leq i}\tau_i-\tau_j}{\tau_i}\right\}.
 \end{equation}
 \end{theoreme}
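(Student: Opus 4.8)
\textbf{Proof plan for Theorem \ref{rectss}.}
The strategy is to apply Corollary \ref{zzani} with $U_n = R_n$ and the exponent $s$ equal to the right-hand side of the claimed inequality. Since $\mu(\limsup_n B_n)=1$, Lemma \ref{equiac}(2) gives that $\mathcal B=(B_n)$ is $\mu$-a.c. (after replacing each $B_n$ by a fixed dilate, which is harmless here since $r_n\to 0$ and only changes multiplicative constants); so the hypotheses of Corollary \ref{zzani} are met once we check the content estimate. Thus the core of the proof is the geometric lemma: for $s<\min_{1\le i\le d}\frac{\dim(\mu)+\sum_{1\le j\le i}(\tau_i-\tau_j)}{\tau_i}$ and $n$ large,
$$\mathcal H^{\mu,s}_\infty(R_n)\ge \mu(B_n).$$
By Theorem \ref{contss}, up to the constant $c(d,\mu,s)$ (which disappears into the $\ep$-room since $s$ is strictly below the critical value) it suffices to bound $\mathcal H^s_\infty(R_n\cap K)$ from below by a power of $r_n$ slightly smaller than $\dim(\mu)$, i.e. to show $\mathcal H^s_\infty(R_n\cap K)\gtrsim r_n^{\dim(\mu)+o(1)}$ — because $\mu(B_n)\approx r_n^{\dim(\mu)}$ by exact dimensionality (here using that $B_n$ is centered on $K$ and some care with the non-doubling behavior, handled again via the $\ep$-slack).

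\textbf{The content estimate for the rectangle.}
The rectangle $R_{\tau}(x_n,r_n)$ has side lengths $r_n^{\tau_1}\ge\cdots\ge r_n^{\tau_d}$ (recall $\tau_i\ge 1$ so the larger $\tau_i$, the thinner the side). I would estimate $\mathcal H^s_\infty(R_n\cap K)$ by a standard self-similar mass-distribution / content argument: cover $R_n$ by cubes of the various dyadic-type scales $r_n^{\tau_i}$. At scale $\rho\in[r_n^{\tau_{i+1}},r_n^{\tau_i}]$ one needs roughly $\prod_{j\le i}(r_n^{\tau_j}/\rho)$ cubes of side $\rho$ to cover $R_n$ (the first $i$ directions are "long" compared to $\rho$, the last $d-i$ are "short"), so any cover of $R_n\cap K$ contributing to $\mathcal H^s_\infty$ must, via the natural self-similar measure supported on $K\cap R_n$, pay at least
$$\rho^s\cdot \#\{\text{cubes}\}\ \gtrsim\ \rho^{s}\cdot \prod_{j\le i}r_n^{\tau_j}\cdot \rho^{-i}\cdot (\text{measure normalisation } r_n^{-\dim(\mu)\cdot(\text{something})}),$$
optimised over $i$. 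Tracking the exponents, the worst scale is $\rho=r_n^{\tau_i}$ for the $i$ achieving the minimum, and one gets $\mathcal H^s_\infty(R_n\cap K)\gtrsim r_n^{\tau_i s - \sum_{j\le i}(\tau_i-\tau_j)+\dim(\mu)\cdot 0}$-type bound; requiring this exponent to be $\le \dim(\mu)$ for every $i$ is exactly $\tau_i s\le \dim(\mu)+\sum_{j\le i}(\tau_i-\tau_j)$, i.e. $s\le \min_i\frac{\dim(\mu)+\sum_{j\le i}(\tau_i-\tau_j)}{\tau_i}$. The hypothesis that $K$ equals the closure of its interior is what lets one say that $K\cap R_n$ genuinely "fills" the rectangle at all the intermediate scales down to $r_n^{\tau_d}$, rather than possibly meeting it in a lower-dimensional slice; concretely it guarantees a definite proportion of the $f_w(K)$ with $|f_w(K)|\approx r_n^{\tau_d}$ lie inside $R_n$, supplying the needed lower bound on the number of essential cubes.

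\textbf{Assembling.}
Fix $\ep>0$, let $s=\min_i\frac{\dim(\mu)+\sum_{j\le i}(\tau_i-\tau_j)}{\tau_i}-\ep$ (and note $s\le \dim(\mu)$ since the $i=d$ term, or the fact that all $\tau_i\ge 1$, forces the minimum to be $\le\dim(\mu)$; this is needed to invoke Corollary \ref{zzani}). Combining the content estimate above with Theorem \ref{contss} gives $\mathcal H^{\mu,s}_\infty(R_n)\ge c(d,\mu,s)\,\mathcal H^s_\infty(R_n\cap K)\ge c(d,\mu,s)\,r_n^{s\tau_i^*-\sum_{j\le i^*}(\tau_{i^*}-\tau_j)}$ where $i^*$ realises the min; by the choice of $s$ this exponent is $<\dim(\mu)$, hence $\ge \mu(B_n)\approx r_n^{\dim(\mu)}$ for all large $n$. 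Corollary \ref{zzani} then yields $\dim_H(\limsup_n R_n)\ge s=\min_i\frac{\dim(\mu)+\sum_{j\le i}(\tau_i-\tau_j)}{\tau_i}-\ep$, and letting $\ep\to 0$ finishes the proof.

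\textbf{Main obstacle.}
The routine part is the $\ep$-bookkeeping; the genuine work is the geometric content lower bound $\mathcal H^s_\infty(R_{\tau}(x_n,r_n)\cap K)\gtrsim r_n^{\dim(\mu)+o(1)}$ for a self-similar set with no separation hypothesis. Without separation one cannot directly count distinct cylinders $f_w(K)$ inside $R_n$, so the argument must go through the self-similar measure $\mu$ itself (as in the proof of Theorem \ref{contss}): one builds a measure on $R_n\cap K$ of total mass $\approx \mu(B_n)$ but with controlled $\mu$-essential content, and the delicate point is getting the measure's scaling to match the anisotropic geometry of $R_n$ at every intermediate scale $r_n^{\tau_i}$ simultaneously — i.e. proving the "number of essential cubes at scale $\rho$" lower bound uniformly in $\rho\in[r_n^{\tau_d},r_n^{\tau_1}]$. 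The hypothesis $K=\overline{\widering K}$ is precisely what makes this uniform filling possible and is where I expect the only real subtlety to lie.
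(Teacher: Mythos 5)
You have the right skeleton (reduce the theorem to a lower bound for the $\mu$-essential content of $R_n$, compare it with the measure of $B_n$, and feed this into Corollary \ref{zzani}/Theorem \ref{lowani}), but the one step you leave unproven is precisely the substance of the theorem, and the route you sketch for it would not work as stated. You need $\mathcal H^{s}_{\infty}(R_n\cap K)$ to be bounded below by a constant times $r_n^{\,s\tau_{i^*}-\sum_{j\le i^*}(\tau_{i^*}-\tau_j)}$, and you propose to get this by counting cylinders $f_{\underline{i}}(K)$ of diameter about $r_n^{\tau_d}$ inside $R_n$, asserting that $K=\overline{\widering{K}}$ guarantees a definite proportion of them lie in $R_n$. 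That assertion is unjustified and problematic in the generality required: the balls $B_n$ are not assumed centered on $K$, and even when $x_n\in K$ it may lie on $\partial K$, where the interior of $K$ need not fill a definite proportion of $R_n$ at the scales $r_n^{\tau_i}$; moreover, with no separation condition there is no direct cylinder count available, and your exponent bookkeeping (the ``$\dim(\mu)\cdot(\text{something})$'' normalisation) is never pinned down. You yourself flag this as the place ``where I expect the only real subtlety to lie''; that subtlety is exactly the gap. A second, smaller lapse: you invoke $\mu(B_n)\approx r_n^{\dim(\mu)}$ ``by exact dimensionality'', but exact dimensionality holds only $\mu$-almost everywhere and gives nothing along an arbitrary sequence of centers; the needed one-sided bound must be arranged along a subsequence.

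The paper's proof avoids both issues by a different mechanism, which you could adopt to repair the argument. Since $K=\overline{\widering{K}}$ and the weights are positive, $\mu(\widering{K})>0$; one works with the normalized restriction $\widetilde\mu=\mu(\cdot)/\mu(\widering{K})$, which is absolutely continuous with respect to $\mu$ and has the same dimension $\alpha=\dim(\mu)$, and, up to a $\widetilde\mu$-a.c.\ extraction, one assumes every $B_n\subset\widering{K}$. Then $R_n\subset B_n\subset \widering{K}$, so $R_n\cap K=R_n$ and no ``filling at intermediate scales'' argument is needed at all: the Hausdorff content of the open rectangle is given by the purely Euclidean estimate of Lemma \ref{rectc} (Koivusalo--Rams), namely $\mathcal H^{s}_{\infty}(R_n)$ is comparable to $r_n^{\,\max_{1\le k\le d}\{s\tau_k-\sum_{1\le j\le k}(\tau_k-\tau_j)\}}$, and the inequality \eqref{contmut} coming from Theorem \ref{contss} converts this into a lower bound for $\mathcal H^{\widetilde\mu,s}_{\infty}(R_n)$ when $s<\alpha$. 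For the measure of the balls, Lemma \ref{gscainf} provides a further $\widetilde\mu$-a.c.\ subsequence along which $\widetilde\mu(B_n)\le r_n^{\alpha-\varepsilon}$, which is the only inequality needed. Combining the two gives $\mathcal H^{\widetilde\mu,s}_{\infty}(R_n)\ge\widetilde\mu(B_n)$ for all large $n$ whenever $s$ is below the claimed threshold minus $O(\varepsilon)$, so $s(\widetilde\mu,\mathcal B,\mathcal R)$ is at least that threshold, and Theorem \ref{lowani} concludes after letting $\varepsilon\to0$. Without the restriction-to-the-interior step (or a genuine proof of your cylinder-counting claim, uniform over scales and valid near $\partial K$), your proposal is incomplete at its central point.
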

\begin{remarque}
\label{remani}
(1)  Since $(\tau_1 ,... ,\tau_d)\mapsto \min_{1\leq i\leq d }\left\{\frac{\dim(\mu)+\sum_{1\leq j\leq i}\tau_i-\tau_j}{\tau_i}\right\}$ is continuous, the result stands for the sequence of closed rectangles as well. 

(2) One may also apply any rotation to the shrunk rectangles, this wouldn't change the bound (since Hausdorff contents are invariant by rotation).\mk
 
(3) Theorem \ref{rectss} extends the results of \cite{ED1}, where the measure was  quasi-Bernoulli or verifying the open set condition, and supported on $[0,1]^d$.\mk
 
(4) When $K$ is the closure of its interior  is that it is easy to compute $\mathcal{H}^s _{\infty}(R_n \cap K).$ Without this assumption, the conclusion of Theorem \ref{rectss} fails. Indeed, in general, no formula involving only the dimension of the measure and the contraction ratio can be accurate.  For instance, consider a self-similar measure in $\mathbb{R}^2$ carried by a line $D$ and a sequence $(B_n)_{n\in\mathbb{N}}$ of balls centered on the attractor $K$ and verifying $\mu(\limsup_{n\rightarrow+\infty}B_n)=1.$ Then,  consider the sequence of rectangles $R_n$ with side-length $\vert B_n \vert^{\tau_1}\times \vert B_n \vert^{\tau_2}$, $1\leq \tau_1 \leq \tau_2$ and where the largest side (of side-length $\vert B_n \vert^{\tau_1}$) is in the direction of $D$. In this case, Theorem~\ref{prop-ss} yields the lower-bound $\dim_H (\limsup_{n\rightarrow+\infty}R_n)\geq \frac{\dim_H (\mu)}{\tau_1}$. Then if $\widehat{R}_n$ are  the rectangles $R_n$ rotated by $\frac{\pi}{2}$, Theorem~\ref{prop-ss} gives that  $\dim_H (\limsup_{n\rightarrow+\infty}\widehat{R}_n)\geq \frac{\dim_H (\mu)}{\tau_2}$. Moreover, under additional conditions, these lower bounds are equalities. 
 
%Note also that if one takes rectangles in directions in which the geometry of the attractor is easy to understand, then more computation can be done of course.
 \end{remarque}

\subsection{Application to self-similar shrinking targets}

We deal with points well approximable by orbits under an IFS with no exact overlap and satisfying the condition introduced by  Barral and Feng  (\cite{BFmult})  of being \textit{dimension regular} with similarity dimension less than than $d$, after Hochman's work (\cite{Hoc}). 

\label{sec-notaself}

\begin{definition}
\label{dimreg}
Let $S=\left\{f_1,...,f_m\right\}$ be a self-similar  IFS  of $\mathbb{R}^d$. Denote by $0<c_1,\ldots, c_m<1$ the contraction ratio of $f_1 ,\ldots,f_m$.  The system~$S$ is said to be dimension regular if, for any probability vector $(p_1,...,p_m)$, the self-similar measure associated with $S$ and the probability vector $(p_1,...,p_m)$ verifies $$\dim (\mu)=\min\left\{d,\frac{\sum_{1\leq i\leq m}p_i \log (p_i)}{\sum_{1\leq i\leq m} p_i \log(c_i)}\right\}.$$
This in particular, implies that, denoting by $\dim_{sim} (K)$ the unique real number $s$ satisfying $\sum_{i=1}^m c_i ^{s}=1,$ one has $\dim_H (K)=\min\left\{\dim_{sim}(K),d\right\}.$
\end{definition}

Some notation useful when dealing with IFS are introduced now. Those notations will be used repeatedly throughout this article. 

Let  $S=\left\{f_1 ,...,f_m\right\}$ be a self-similar IFS,  $0<c_1\ldots, c_m <1$ the associated contraction ratios, and $K$ the attractor of $S$.  Let $(p_1 ,...,p_m)$ be a  probability vector with positive entries, $\mu$ the self-similar associated with $S$ and $(p_1 ,...,p_m)$.
% (such that \eqref{def-ssmu2} is satisfied). 

Let $\Lambda=\left\{1,...,m\right\}$ and $\Lambda^* =\bigcup_{k\geq 0} \Lambda ^{k}$. 
For  $k\geq 0$ and $\underline{i}:=(i_1 ,...,i_k)\in\Lambda^k$, define  
\begin{equation*}
\begin{split}
c_{\underline{i}}&=c_{i_1}\cdots c_{i_k},\  f_{\underline{i}}=f_{i_1}\circ \cdots\circ f_{i_k}, \\
\Lambda ^{(k)}&=\left\{\underline{i}=(i_1 ,\ldots, i_s) \in\Lambda^* : \ c_{i_s}2^{-k}< c_{\underline{i}}\leq 2^{-k}  \right\}.
\end{split}
\end{equation*}

\begin{theoreme}
\label{shrtag}
Let $S=\left\{f_1 ,...,f_m\right\}$ be a dimension  regular self-similar IFS with contraction ratio  $0<c_1,\ldots, c_m <1$ and such that the attractor $K$ verifies $\dim_{sim}(K)=\dim_H (K)$. For any $x\in K$, for any $\delta \geq 1$, 
\begin{equation}
\dim_{H}(\limsup_{\underline{i}\in \Lambda^*}B(f_{\underline{i}}(x), c_{\underline{i}}^{\delta}))=\frac{\dim_H (K)}{\delta}.
\end{equation}  
\end{theoreme}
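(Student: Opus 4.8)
The statement follows by applying Theorem~\ref{prop-ss} to a suitable $\mu$-a.c.\ sequence of balls, together with the matching upper bound from Theorem~\ref{majoss}. First I would fix $x\in K$ and $\delta\ge 1$ and, for a probability vector $(p_1,\dots,p_m)$ to be chosen later, let $\mu$ be the associated self-similar measure. Since the IFS is dimension regular with $\dim_{sim}(K)=\dim_H(K)$, taking $(p_i)$ to be the \emph{natural weights} $p_i=c_i^{\dim_H(K)}$ makes $\dim(\mu)=\dim_H(K)$ (this is exactly the vector realizing the minimum in Definition~\ref{dimreg}, using $\sum c_i^{\dim_H(K)}=1$ and $\dim_{sim}(K)\le d$). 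The candidate ball sequence is $\mathcal{B}=(B(f_{\underline i}(x),c_{\underline i}))_{\underline i\in\Lambda^*}$, re-indexed by $\mathbb N$ so that $|B_n|\to0$ (legitimate since $c_{\underline i}\to0$ as $|\underline i|\to\infty$), and $U_n=\widering B_n^{\delta}$, i.e.\ $\widering B(f_{\underline i}(x),c_{\underline i}^{\delta})$.

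The key step is to verify that $\mathcal{B}$ is $\mu$-a.c. By Lemma~\ref{equiac}(2) it suffices to find $v<1$ with $\mu(\limsup (vB_n))=1$. This I would do by a Borel--Cantelli / self-similarity argument: group the words by generation $\Lambda^{(k)}$; for each $\underline i\in\Lambda^{(k)}$ the ball $vB_{\underline i}$ of radius $\approx v2^{-k}$ around $f_{\underline i}(x)$ contains a fixed-proportion-$\mu$-mass chunk of $f_{\underline i}(K)$ provided $v$ is chosen so that $f_{\underline i}(x)$ is ``not too close'' to $\partial f_{\underline i}(K)$ relative to $v\,\mathrm{diam}(f_{\underline i}(K))$ — more precisely, since $x\in K$ and $f_{\underline i}$ is a similarity of ratio $c_{\underline i}$, $f_{\underline i}(x)\in f_{\underline i}(K)$ and $vB_{\underline i}\supset f_{\underline i}(B(x,v'c_{\underline i}^{-1}\cdot c_{\underline i}))=f_{\underline i}(B(x,v''))$ for a suitable constant; hence $\mu(vB_{\underline i})\ge c_0\,\mu(f_{\underline i}(K))\ge c_0 \prod p$ for a uniform $c_0>0$. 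Summing over $\underline i\in\Lambda^{(k)}$ gives $\mu\big(\bigcup_{\underline i\in\Lambda^{(k)}}vB_{\underline i}\big)\ge c_0$, and since this holds for every $k$ one gets $\mu(\limsup_n vB_n)\ge c_0>0$; then a standard zero--one / ergodicity argument for self-similar measures (or a direct iteration of the self-similarity relation~\eqref{def-ssmu2} inside each cylinder) upgrades $c_0$ to $1$. I would need to be slightly careful here when $K$ has empty interior, but the argument only uses that a ball of comparable radius around a point of $K$ carries positive $\mu$-mass, which holds since $x\in\supp(\mu)=K$; the uniformity in $\underline i$ comes from self-similarity. This verification is the main obstacle, essentially because overlaps are allowed: one must make sure the lower mass bound $\mu(vB_{\underline i})\gtrsim \mu(f_{\underline i}(K))$ is uniform despite possible collapsing of cylinders.

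Once $\mathcal{B}$ is $\mu$-a.c., Theorem~\ref{prop-ss}(1) applies directly: it gives $s\big(\mu,(B_n),(\widering B_n^{\delta})\big)\ge \frac{\dim(\mu)}{\delta}=\frac{\dim_H(K)}{\delta}$ and hence
$$\dim_H\Big(\limsup_{\underline i\in\Lambda^*}B(f_{\underline i}(x),c_{\underline i}^{\delta})\Big)\ge \dim_H\Big(\limsup_n \widering B_n^{\delta}\Big)\ge \frac{\dim_H(K)}{\delta}.$$
For the reverse inequality I would invoke Theorem~\ref{majoss}: the sequence $\mathcal{B}$ is weakly redundant because within each generation $\Lambda^{(k)}$ the balls $B_{\underline i}$ have comparable radii $\approx 2^{-k}$ and bounded overlap can be arranged by splitting $\Lambda^{(k)}$ into a bounded number of sub-families of disjoint balls (a Besicovitch-type argument in $\mathbb R^d$ with the sup-norm, using that the centers lie in the bounded set $K$), and $B_n\cap K\ne\emptyset$ since $f_{\underline i}(x)\in K$. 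Then for any $s'<\dim(\mu)$ with $\mathcal H^{\mu,s'}_\infty(\widering B_n^\delta)\le \mu(B_n)$ eventually, Theorem~\ref{majoss} yields $\dim_H(\limsup \widering B_n^\delta)\le s'$. To produce such $s'$, use Theorem~\ref{contss}: $\mathcal H^{\mu,s'}_\infty(\widering B_n^\delta)\le |B_n^\delta|^{s'}=c_{\underline i}^{\delta s'}$, while $\mu(B_n)\gtrsim c_{\underline i}^{\dim(\mu)}$ (exact dimensionality, up to $\varepsilon$-losses absorbed by letting $s'\uparrow$); comparing exponents, $c_{\underline i}^{\delta s'}\le c_{\underline i}^{\dim(\mu)}$ holds once $\delta s'\ge \dim(\mu)$, i.e.\ for every $s'>\frac{\dim(\mu)}{\delta}=\frac{\dim_H(K)}{\delta}$ (with $s'<\dim(\mu)$, which is possible since $\delta\ge1$ forces $\frac{\dim_H(K)}{\delta}\le \dim_H(K)=\dim(\mu)$, with equality only when $\delta=1$, a case one handles separately or by a limiting argument). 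Letting $s'\downarrow \frac{\dim_H(K)}{\delta}$ gives the upper bound, and combining the two completes the proof. The one remaining subtlety is the exact-overlap hypothesis being \emph{absent} here — but the upper bound route via Theorem~\ref{majoss} only needs weak redundancy of $\mathcal B$, not any separation of $S$, so no exact-overlap assumption is needed; the dimension-regularity hypothesis enters solely to guarantee $\dim(\mu)=\dim_H(K)$ for the chosen weights.
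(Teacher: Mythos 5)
Your overall architecture (natural weights $p_i=c_i^{\dim_H(K)}$, lower bound via Theorem~\ref{prop-ss}, separate upper bound) is reasonable, but both technical steps you sketch contain genuine gaps. For the lower bound, the whole difficulty is concentrated in your verification that $\mathcal{B}=(B(f_{\underline{i}}(x),c_{\underline{i}}))$ is $\mu$-a.c., and your argument for it does not go through as written: from $\mu(vB_{\underline{i}})\geq c_0\,p_{\underline{i}}$ you cannot ``sum over $\underline{i}\in\Lambda^{(k)}$'' to lower bound $\mu\big(\bigcup_{\underline{i}\in\Lambda^{(k)}}vB_{\underline{i}}\big)$ — sums of measures of overlapping sets only bound the union from above, and since no separation is assumed the balls of a given generation may pile up on each other. (The estimate can be repaired by working symbolically: $vB_{\underline{i}}\supset f_{\underline{i}}(B(x,v))$, the cylinders $[\underline{i}]$, $\underline{i}\in\Lambda^{(k)}$, are pairwise disjoint, so $\mu\big(\bigcup_{\underline{i}}vB_{\underline{i}}\big)\geq\mu(B(x,v))>0$; but this only gives \emph{positive} measure of the limsup, and Lemma~\ref{equiac}(2) needs measure $1$, so your ``standard zero--one law'' upgrade is itself a step that must be proved, e.g.\ by noting $f_j(A)\subset A$ for the limsup set $A$ and lifting to the ergodic Bernoulli shift.) The paper avoids all of this with a simple trick you missed: it takes the inflated balls $B_{\underline{i}}=B(f_{\underline{i}}(x),2\vert K\vert c_{\underline{i}})$, so that $\tfrac12 B_{\underline{i}}\supset K_{\underline{i}}$ and each generation already covers $K$; hence the family is $\mu$-a.c.\ for \emph{any} measure supported on $K$, and the sandwich \eqref{enca1}, $\limsup B_{\underline{i}}^{\delta+\varepsilon}\subset\limsup B(f_{\underline{i}}(x),c_{\underline{i}}^{\delta})\subset\limsup B_{\underline{i}}^{\delta}$, recovers the exponent $\delta$ in the limit. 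A small additional slip: $\mu(B_n)\gtrsim c_{\underline{i}}^{\dim(\mu)}$ does not follow from exact dimensionality (which is only a $\mu$-a.e.\ statement), but from $B_n\supset f_{\underline{i}}(B(x,1))$ and $\mu\circ f_{\underline{i}}^{-1}\leq p_{\underline{i}}^{-1}\mu$, i.e.\ $\mu(B_n)\geq p_{\underline{i}}\,\mu(B(x,1))$.

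The upper bound is where your route really breaks. Theorem~\ref{majoss} requires the sequence to be weakly redundant, and your justification — a ``Besicovitch-type argument'' splitting each generation into boundedly many disjoint subfamilies — is not valid: Besicovitch-type lemmas control the multiplicity of coverings by balls centered at the points being covered, not the overlap multiplicity of an arbitrary prescribed family. Here $\#\Lambda^{(k)}$ grows exponentially and, since no separation condition is imposed on $S$, the centers $f_{\underline{i}}(x)$, $\underline{i}\in\Lambda^{(k)}$, may cluster so strongly that exponentially many subfamilies are needed; weak redundancy is neither automatic nor verified under the stated hypotheses. The paper does not use Theorem~\ref{majoss} at all: it proves the upper bound by a direct Hausdorff--Cantelli covering computation with the natural measure $\nu_s$, using only the disjointness of the symbolic cylinders indexed by $\Lambda^{(k)}_t$, namely $\sum_{k}\sum_{\underline{i}\in\Lambda^{(k)}_t}\big(\vert B_{\underline{i}}\vert^{\delta}\big)^{\frac{s+\varepsilon}{\delta}}\leq(4\vert K\vert)^{s+\varepsilon}\sum_{k\geq0}t^{k\varepsilon}<+\infty$, which needs no redundancy or separation whatsoever. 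So while your plan could be salvaged (fix the a.c.\ verification symbolically plus a zero--one law, and replace the Theorem~\ref{majoss} step by the elementary covering sum), as written it has two steps that would fail or are unjustified.
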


This result extends some of the results obtained in \cite{AllenB} and  \cite{Baker}, under the open set condition.

\subsection{A result motivated by a question of  Mahler}$\ $
\label{sec-mahlint}

Let $\mathcal{Q}=\left\{B(\frac{p}{q},q^{-2})\right\}_{q\in\mathbb{N}^* ,\,  0\leq p\leq q}.$ Recall the following  result in Diophantine approximation \cite{Jarnik}:
\begin{align}
\label{approxrat}
&\bullet \limsup_{B\in\mathcal{Q}}B=[0,1]. \nonumber\\
&\bullet \text{ For any }\delta \geq 1, \dim_H (\limsup_{B\in\mathcal{Q}}B^{\delta})=\frac{1}{\delta}.
\end{align}
Unlike in the case of the points in $[0,1]$, the approximation by rational numbers of elements of the middle third Cantor set $K_{1/3}$ set is not well understood yet. This question was raised by Mahler, and only some partial results are known (see \cite{BV},~\cite{BS2}). Here we consider the set $K_{1/3}^{(0)}$ of points in $[0,1]$  having an asymptotic lower frequency of appearance of the digit $1$ in basis 3 equal to $0$.  This set contains $K_{1/3}$  and has the same Hausdorff dimension as $K_{1/3}$. We compute the Hausdorff dimension of sets of points in $K_{1/3}^{(0)}$ which are well approximable by rational numbers.

%In this section, one computes the spectrum of dimensions of points approximated at speed $\delta\geq 1$ belonging the larger set $K_{1/3}^{(0)}$, which contains $K_{1/3}$ but has same Hausdorff dimension.

To describe more precisely the problem, let $S=\left\{f_1 ,f_2,f_3\right\}$ where $f_1$, $f_2$ and $f_3$ are  the contracting affine maps of $\mathbb R$ defined by $f_0 (x)=\frac{1}{3}x$, $f_1 (x)=\frac{1}{3}x+\frac{1}{3}$ and $f_2 (x)=\frac{1}{3}x+\frac{2}{3}$. The attractor of $S$ is  $[0,1]$. Let $\Lambda=\left\{0,1,2\right\}$.
% $c_1 = c_2 =c_3 =\frac{1}{3}$.  

%One endows $\Sigma$ with the metric $d((x_n),(y_n))=3^{-\max\left\{k:\forall 0\leq j\leq k-1, x_j=y_j\right\}}.$ Note that for any $k\in\mathbb{N}$, any $\underline{i}\in\Lambda^k$, $\vert \underline{i}\vert =3^{-k}=\vert K_{\underline{i}}\vert.$ 

 The  shift operation on the symbolic space $\Lambda^\mathbb{N}$ is defined by $\sigma$. The canonical projection from $\Lambda^\mathbb{N}$ to $[0,1]$ is the mapping  
\begin{equation}
\label{canoproj}
 \pi:x=(x_n)_{n\in\mathbb{N}}\mapsto \lim_{n\to +\infty} f_{(x_1 ,...,x_n )}(0).
\end{equation}
%Note that, in this part, because its more convenient to have an bijectiv projection, one chooses to work $[0,1[$ instead of $[0,1].$

The set $K_{1/3}$ is the attractor of $\{f_0,f_2\}$ and also the image by canonical projection of $\{0,2\}^\mathbb{N}$.

\begin{definition}
\label{defphi}
Let  $\phi:\Lambda^\mathbb{N} \to \left\{0,1\right\}$ defined by $\begin{cases}\phi(x)=1 \text{ if } x_1 =1 \\
\phi(x)=0 \text{ if } x_1 =0 \text{ or }2.\end{cases}$

and 
\begin{align}
\label{defK}
 K_{1/3}^{(0)}=\pi\left(\left\{x\in \Lambda^\mathbb{N}: \liminf_{k\to +\infty}\frac{S_k\phi(x)}{k}=0 \right\}\right )\nonumber,
\end{align}
where $(S_k\phi)_{k\in\mathbb N}$ stands for the sequence of Birkhoff sums of $\phi$. 
\end{definition}
It is proved in  \cite{FanFengWu} that  $\dim_H K_{1/3}^{(0)}=\frac{\log 2}{\log 3}(=\dim_H K_{1/3})$.
Let us state the main results of this subsection.

\begin{theoreme}
\label{main}
For every $\delta\geq 1,$  
\begin{equation}
\label{mainequa}
\dim_H\Big (\limsup_{B\in\mathcal{Q}}B^{\delta}\cap K_{1/3}^{(0)}\Big )=\min\left\{\frac{\log 2}{\log 3},\frac{1}{\delta}\right\}.
\end{equation}
\end{theoreme}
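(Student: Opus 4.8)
\textbf{Proof proposal for Theorem \ref{main}.}

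The plan is to establish the upper and lower bounds separately, using Theorem \ref{majoss} (the companion upper bound) and Theorem \ref{lowani} (our main lower bound), applied with a carefully chosen measure $\mu$ on $K_{1/3}^{(0)}$. The natural candidate is, for a small parameter $\eta>0$, the self-similar (Bernoulli) measure $\mu_\eta$ associated with the IFS $\{f_0,f_1,f_2\}$ and the probability vector $(\frac{1-\eta}{2},\eta,\frac{1-\eta}{2})$, so that $\mu_\eta$ gives mass $\eta$ to the digit $1$. Since $S$ has the open set condition, $\mu_\eta$ is exact dimensional with $\dim(\mu_\eta)=\frac{-(1-\eta)\log\frac{1-\eta}{2}-\eta\log\eta}{\log 3}$, which tends to $\frac{\log 2}{\log 3}$ as $\eta\to 0^+$. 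The key point, to be checked first, is that $\mu_\eta$ is \emph{carried by} $K_{1/3}^{(0)}$: by the law of large numbers for the Birkhoff sums $S_k\phi$ under the $\mu_\eta$-generic (Bernoulli) measure on $\Lambda^{\mathbb N}$, one has $\frac{S_k\phi(x)}{k}\to\eta$ a.e., so $\mu_\eta$-a.e.\ point lies in the set where $\liminf\frac{S_k\phi}{k}\le\eta$; a small additional argument (mixing in longer and longer blocks of $0$'s and $2$'s, or directly using that a measure with $\liminf=\eta$ a.e.\ can be slightly modified) shows $K_{1/3}^{(0)}$ itself carries a self-similar-like measure of dimension arbitrarily close to $\frac{\log 2}{\log 3}$ — in fact, it suffices to observe $\mu_\eta(\{x:\liminf_k S_k\phi(x)/k\le\eta\})=1$ and that this set, for the purposes of the limsup intersection, can be replaced by $K_{1/3}^{(0)}$ after an elementary inclusion/monotonicity step as $\eta\to 0$.

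For the lower bound in \eqref{mainequa}: fix $\eta>0$ small. By \eqref{approxrat} we have $\limsup_{B\in\mathcal Q}B=[0,1]\supset\mathrm{supp}(\mu_\eta)$, hence $\mu_\eta(\limsup_{B\in\mathcal Q}B)=1$, and since $\mu_\eta$ is a self-similar measure under the OSC it is doubling, so by Lemma \ref{equiac}(2) the sequence $\mathcal Q=(B(p/q,q^{-2}))$ is $\mu_\eta$-a.c. Now apply Corollary \ref{minoeffec} (or equivalently Theorem \ref{prop-ss}(1), via Theorem \ref{contss}) with $\mu=\mu_\eta$: since $\mu_\eta$ is self-similar, $t(\mu_\eta,\delta,\mathcal Q)\le\dim(\mu_\eta)$, whence $s_\delta\ge\frac{\dim(\mu_\eta)}{\delta}$ and therefore
\begin{equation*}
\dim_H\Big(\limsup_{B\in\mathcal Q}\widering B^{\delta}\Big)\ge s_\delta,\qquad\text{with}\quad \mathcal H^{\zeta}\Big(\limsup_{B\in\mathcal Q}\widering B^{\delta}\Big)>0
\end{equation*}
for a gauge $\zeta$ with $\frac{\log\zeta(r)}{\log r}\to\frac{\dim(\mu_\eta)}{\delta}$. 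The real content is to get the limsup set \emph{inside} $K_{1/3}^{(0)}$; here I would instead run Theorem \ref{lowani} directly with this $\mu_\eta$ and $U_n=\widering B_n^\delta$, and observe that the positive-$\mathcal H^\zeta$-measure subset produced by the construction is, up to discarding a null set, contained in $\mathrm{supp}(\mu_\eta)\subset\overline{K_{1/3}^{(0)}}$; more precisely the constructed Cantor-like set lies in $\limsup U_n$ and is of full measure for a measure absolutely continuous with respect to (a limit of) $\mu_\eta$, hence meets $K_{1/3}^{(0)}$ in a set of positive $\mathcal H^\zeta$-measure once one notes $\mu_\eta$-a.e.\ point has $\liminf S_k\phi/k=\eta$ and a diagonal choice $\eta=\eta_j\to 0$ together with the frequency bound lets one land in $\liminf=0$. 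This gives $\dim_H(\limsup_{B\in\mathcal Q}B^\delta\cap K_{1/3}^{(0)})\ge\frac{\dim(\mu_\eta)}{\delta}$ for every $\eta>0$, and letting $\eta\to 0$ yields the bound $\min\{\frac{\log 2}{\log 3},\frac1\delta\}$ (the $\frac1\delta$ cap appearing because $\dim(\mu_\eta)<\frac{\log 2}{\log 3}<1$ forces $\frac{\dim(\mu_\eta)}{\delta}$ to approach $\min\{\frac{\log2}{\log3},\frac1\delta\}$ only when $\frac1\delta\le\frac{\log2}{\log3}$; when $\frac1\delta>\frac{\log2}{\log3}$ one gets exactly $\frac{\log2}{\log3}$ in the limit, matching the minimum).

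For the upper bound: note $\limsup_{B\in\mathcal Q}B^\delta\cap K_{1/3}^{(0)}\subset\limsup_{B\in\mathcal Q}B^\delta$, so \eqref{approxrat} immediately gives $\dim_H\le\frac1\delta$. For the bound by $\frac{\log 2}{\log 3}$, simply use $K_{1/3}^{(0)}\subset[0,1]$ has $\dim_H K_{1/3}^{(0)}=\frac{\log2}{\log3}$ (the result of \cite{FanFengWu} quoted above), so $\dim_H(\cdot)\le\dim_H K_{1/3}^{(0)}=\frac{\log2}{\log3}$. Combining, $\dim_H\le\min\{\frac{\log2}{\log3},\frac1\delta\}$, completing the proof together with the lower bound. \textbf{The main obstacle} I anticipate is the localization step in the lower bound: ensuring that the positive-Hausdorff-gauge-measure subset of $\limsup\widering B_n^\delta$ produced by Theorem \ref{lowani} genuinely intersects $K_{1/3}^{(0)}$ (and not merely $\overline{K_{1/3}^{(0)}}$ or $[0,1]$) in a large set — i.e.\ controlling the digit-$1$ frequency along the points of the constructed Cantor set. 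This should be handled by choosing, at each generation of the construction, the approximating rationals and the associated contracted balls so that the cylinders used are those of $\mu_\eta$ (which have frequency $\approx\eta$), then taking a sequence $\eta_j\to0$ and slowing down the rate so that the $\liminf$ of the digit-$1$ frequency along the constructed points is $0$; alternatively, one re-derives a bespoke version of Theorem \ref{lowani}'s construction with $\mu$ replaced at stage $j$ by $\mu_{\eta_j}$, which is exactly the sort of "heterogeneous" flexibility the title of the paper advertises.
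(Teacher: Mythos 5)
Your upper bound is exactly the paper's (Jarnik's bound $\frac1\delta$ plus $\dim_H K_{1/3}^{(0)}=\frac{\log2}{\log3}$ from \cite{FanFengWu}), but the lower bound has a genuine gap, and it is precisely at the point you flag as ``the main obstacle''. For every fixed $\eta>0$ the Bernoulli measure $\mu_\eta$ satisfies, by the law of large numbers, $\lim_k S_k\phi(x)/k=\eta$ for $\mu_\eta$-a.e.\ $x$, hence $\liminf_k S_k\phi(x)/k=\eta>0$ a.e., i.e.\ $\mu_\eta\big(K_{1/3}^{(0)}\big)=0$. Consequently no application of Theorem \ref{lowani} with $\mu=\mu_\eta$ can localize anything inside $K_{1/3}^{(0)}$: the theorem only produces a measure carried by $\limsup \widering{B}^{\delta}$ with a gauge control, and even if that measure were absolutely continuous with respect to $\mu_\eta$ (it is not in general --- for $\delta>\frac{\log3}{\log2}$ its support has dimension at most $\frac1\delta<\dimm(\mu_\eta)$, hence is $\mu_\eta$-null), it would still give zero mass to $K_{1/3}^{(0)}$ because $\mu_\eta$ itself does. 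The construction in Theorem \ref{lowani} controls local dimensions of $\mu$ along the selected balls, not digit-$1$ frequencies of the constructed points, so ``landing in $\liminf=0$'' is not obtained. The diagonal escape $\eta_j\to0$ does not help either: with each fixed $\eta_j$ you are in the situation above, and the limit object is the Cantor measure on $K_{1/3}$, for which the hypothesis ``$\mathcal Q$ is a.c.'' is exactly the open Mahler-type question the set $K_{1/3}^{(0)}$ was introduced to bypass. A ``heterogeneous'' version of Theorem \ref{lowani} with a measure changing at each generation is not among the paper's statements and would have to be proved from scratch, with additional frequency bookkeeping; as written, the lower bound is not established.

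The paper's proof keeps the measure fixed and equal to Lebesgue (for which $\mathcal Q$ is a.c.) and instead shrinks the open sets: inside each $B(\frac pq,q^{-2\delta})$ it chooses a triadic interval $T$ of generation $n_q=\lfloor\log_3(q^{2\delta})\rfloor+1$ and sets $U_{p,q,\delta}=F_T(\Omega_{N_{p,q,\delta}})$, a homothetic copy of the union of interiors of the level-$N_{p,q,\delta}$ construction intervals of $K_{1/3}$, with $N_{p,q,\delta}$ so large that every point of $U_{p,q,\delta}$ has digit-$1$ frequency at scale $n_q+N_{p,q,\delta}$ below $\varepsilon_q$. Membership of $\limsup U_{p,q,\delta}$ in $K_{1/3}^{(0)}\cap\limsup_{B\in\mathcal Q}B^{\delta}$ is then automatic, and the self-similarity of $K_{1/3}$ gives $\mathcal H^{\frac{\log2}{\log3}}_{\infty}(U_{p,q,\delta})\geq \widetilde C q^{-2\delta\frac{\log2}{\log3}}$, whence $\mathcal H^{s}_{\infty}(U_{p,q,\delta})\geq \widetilde C q^{-2}=\widetilde C\,\mathcal L\big(B(\tfrac pq,q^{-2})\big)$ for $s=\min\{\frac{\log2}{\log3},\frac1\delta\}$ (for $\delta\geq\frac{\log3}{\log2}$ one uses the concavity bound $\mathcal H^{s/\delta}_\infty\geq(\mathcal H^{s}_\infty)^{1/\delta}$). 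Theorem \ref{lowani} (or the Koivusalo--Rams Theorem \ref{lowkr}) applied to $\mathcal Q$, these $U_{p,q,\delta}$ and the Lebesgue measure then yields the lower bound $\min\{\frac{\log2}{\log3},\frac1\delta\}$. If you want to rescue a measure-based argument you must first exhibit a measure actually carried by $K_{1/3}^{(0)}$ (necessarily non-homogeneous, with digit-$1$ frequency dropping to $0$ along sparse generations), prove that $\mathcal Q$ is a.c.\ for it, and estimate its essential contents; this is substantially harder than the paper's content computation for the sets $U_{p,q,\delta}$.
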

%%%%%%%%%%%%%%%%%%%%%%%%%%%%%%%%%
%
Observe that a  saturation phenomenon occurs : $\dim_H(\limsup_{B\in\mathcal{Q}}B^{\delta}\cap K_{1/3}^{(0)})=\frac{\log 2}{\log 3}$ for $1\leq \delta \leq \frac{\log 3}{\log 2}$.
%%%%%%%%%%%%%%%%%%%%%%%%%%%%%%%%%

%%%%%%%%%%%%%%%%%%%%%%%%%%
%%%%%%%%%%%%%%%%%%%%%%%%%%
%%%%%%%%%%%%%%%%%%%%%%%%%%
%%%%%%%%%%%%%%%%%%%%%%%%%%
%%%%%%%%%%%%%%%%%%%%%%%%%%
%%%%%%%%%%%%%%%%%%%%%%%%%%
%%%%%%%%%%%%%%%%%%%%%%%%%%
%%%%%%%%%%%%%%%%%%%%%%%%%%
%%%%%%%%%%%%%%%%%%%%%%%%%%

\bigskip
%%%%%%%%%%%%%%%%%%%%%%%%%%
%%%%%%%%%%%%%%%%%%%%%%%%%%
%%%%%%%%%%%%%%%%%%%%%%%%%%
%%%%%%%%%%%%%%%%%%%%%%%%%%
%%%%%%%%%%%%%%%%%%%%%%%%%%
%%%%%%%%%%%%%%%%%%%%%%%%%%
%%%%%%%%%%%%%%%%%%%%%%%%%%
%%%%%%%%%%%%%%%%%%%%%%%%%%
%%%%%%%%%%%%%%%%%%%%%%%%%%

In Section \ref{sec-low}, the general ubiquity theorem, Theorem \ref{lowani}, is proved as well as Corollary \ref{minoeffec}. Section \ref{sec-parti} gives estimations of essential contents in the self-similar case and Theorem \ref{contss} is proved. 

Section \ref{sec-example} gives three applications to the main result, Theorem \ref{lowani}. More precisely, the ubiquity theorems for self-similar measures, Theorem \ref{prop-ss} and Theorem \ref{rectss}, are proved in the first sub-section. The second sub-section treats the case of self-similar shrinking targets for dimension regular IFS with similarity dimension less than $d$, e.g, Theorem \ref{shrtag} is proved. In the last sub-section, one gives an application in Diophantine approximation, Theorem \ref{main} is proved.  

\section{Proof of Theorem \ref{lowani}}
\label{sec-low}
\subsection{Preliminary facts}

We gather  in this subsection a series of results on which we will base the proof of Theorem \ref{lowani}.

The following lemma, which is a version of Besicovitch covering Lemma, as well as the subsequent one, both established in \cite{ED2}, will be used several times.
\begin{lemme} 
\label{besimodi}
For any $0<v\leq 1$ there exists $Q_{d,v} \in\mathbb{N}^{\star}$, a constant depending only on the dimension $d$ and $v$, such that for every bounded subset $E\subset \R^{d}$,  for every set $\mathcal{F}=\left\{B(x, r_{(x)} ): x\in E,  r_{(x)} >0 \right\}$, there exists $\mathcal{F}_1,...,\mathcal{F}_{Q_{d,v}}$ finite or countable sub-families of $\mathcal{F}$ such that:\medskip
\begin{itemize}

\item
$\forall 1\leq i\leq Q_{d,v}$, $L\neq L'\in\mathcal{F}_i$, one has $\frac{1}{v}L \cap \frac{1}{v}L'=\emptyset.$\medskip

\item 
$E$ is covered by the families $\mathcal{F}_i$, i.e.
\begin{equation}\label{besi}
 E\subset  \bigcup_{1\leq i\leq Q_{d,v}}\bigcup_{L\in \mathcal{F}_i}L.
 \end{equation}
\end{itemize}
\end{lemme}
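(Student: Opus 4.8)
The final statement is the modified Besicovitch covering lemma (Lemma~\ref{besimodi}). Here is how I would approach its proof.

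\medskip

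\textbf{Overview of the strategy.} The plan is to reduce the statement to the classical Besicovitch covering theorem by a rescaling trick. The classical Besicovitch theorem says: there is a constant $Q_d$, depending only on $d$, such that for any bounded set $E\subset\R^d$ and any assignment $x\mapsto B(x,r_{(x)})$ of balls centered at points of $E$, one can extract $Q_d$ subfamilies $\mathcal{G}_1,\dots,\mathcal{G}_{Q_d}$ of the chosen balls, each consisting of \emph{pairwise disjoint} balls, whose union still covers $E$. I want the stronger disjointness ``$\frac1v L\cap\frac1v L'=\emptyset$'', i.e. the $v$-dilated balls are disjoint. First I would apply the classical Besicovitch theorem not to the balls $B(x,r_{(x)})$ but to the enlarged balls $\widehat B(x):=\frac1v B(x,r_{(x)})=B(x, r_{(x)}/v)$; this family is still a legitimate Besicovitch family (balls centered on $E$), so it yields $Q_d$ subfamilies of the $\widehat B(x)$ that are pairwise disjoint within each subfamily and cover $E$. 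Pulling back, for each index $i$ the corresponding subfamily $\mathcal{F}_i$ of the \emph{original} balls $B(x,r_{(x)})$ has the property that the $\frac1v$-dilates are pairwise disjoint, and since $B(x,r_{(x)})\subset\frac1v B(x,r_{(x)})$ (as $v\le 1$), the union of the original balls over all $i$ still contains $E$. So one may take $Q_{d,v}=Q_d$, in fact independent of $v$; but stating it as depending on $v$ is harmless.

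\medskip

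\textbf{Step-by-step.} (1)~Recall/cite the classical Besicovitch covering theorem in the form above, valid for bounded $E$ (boundedness guarantees finiteness of the needed extraction and is exactly the hypothesis we are given). (2)~Given $\mathcal F=\{B(x,r_{(x)}):x\in E\}$ and $0<v\le1$, define the rescaled family $\widehat{\mathcal F}=\{\,\frac1v L : L\in\mathcal F\,\}=\{B(x,r_{(x)}/v):x\in E\}$, which is again a family of balls centered on $E$. (3)~Apply the classical theorem to $\widehat{\mathcal F}$ to get pairwise-disjoint subfamilies $\widehat{\mathcal F}_1,\dots,\widehat{\mathcal F}_{Q_d}$ covering $E$. (4)~Set $\mathcal F_i=\{\,vL : L\in\widehat{\mathcal F}_i\,\}\subset\mathcal F$; then for $L\ne L'$ in $\mathcal F_i$ the dilates $\frac1v L,\frac1v L'$ are exactly the original disjoint balls of $\widehat{\mathcal F}_i$, giving the first bullet. (5)~For the covering bullet, note $L\subset\frac1v L$ because $v\le1$, hence
\[
E\subset\bigcup_{1\le i\le Q_d}\bigcup_{\widehat L\in\widehat{\mathcal F}_i}\widehat L=\bigcup_{1\le i\le Q_d}\bigcup_{L\in\mathcal F_i}\tfrac1v L,
\]
and one needs the covering by the \emph{undilated} balls $L$. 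Here a small point arises: the classical theorem as usually stated gives that the \emph{chosen} balls cover $E$, and each chosen ball is centered at some point of $E$ — but we need the original (smaller) balls $L$, not the dilated $\frac1v L$, to cover $E$. This is resolved by applying Besicovitch with the constraint built in differently: instead run the classical Besicovitch theorem directly on $\mathcal F$ itself to obtain subfamilies covering $E$ by the $L$'s, and separately use a Vitali-type $5r$-covering argument on the dilated balls to upgrade ordinary disjointness to $\frac1v$-disjointness at the cost of enlarging the number of families from $Q_d$ to some $Q_{d,v}$. Either route works; I would present whichever gives the cleanest constant.

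\medskip

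\textbf{Main obstacle.} The genuine subtlety — and the only place where care is needed — is reconciling the two requirements that pull in opposite directions: we want the \emph{small} balls $L$ to cover $E$, yet we want the \emph{large} balls $\frac1v L$ to be disjoint within each subfamily. A naive single application of Besicovitch to one family or the other gives one property but not both simultaneously. The fix is the standard one: group the balls by dyadic size ($2^{-k-1}<r_{(x)}\le 2^{-k}$), within each size-band ordinary disjointness of the $L$'s already forces a definite separation so that the $\frac1v L$'s have bounded overlap, and then a coloring/greedy argument across at most a bounded (in $d$ and $v$) number of bands produces the $Q_{d,v}$ subfamilies with the dilated-disjointness property while retaining that the $L$'s cover $E$. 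This packing-by-scales argument is routine but is where the constant $Q_{d,v}$, and its dependence on $v$, actually comes from; I would carry it out carefully but without belaboring the elementary volume estimates. Since this lemma is attributed to \cite{ED2}, I would in practice simply cite it there and reproduce the short rescaling argument above.
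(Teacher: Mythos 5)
The paper never proves Lemma \ref{besimodi}: it is quoted from \cite{ED2}, and in the present article it is only invoked with $v=1$, where it is exactly the classical Besicovitch covering theorem. So your argument has to stand on its own, and it does not. The first route you write out in steps (1)--(5) (Besicovitch applied to the enlarged balls $B(x,r_{(x)}/v)$) fails for the reason you yourself identify: it covers $E$ by the enlarged balls, not by the balls of $\mathcal F$. The repair you then sketch (classical Besicovitch on $\mathcal F$, followed by an upgrade of each disjoint subfamily into boundedly many subfamilies whose $\frac1v$-enlargements are disjoint, via dyadic size bands and a greedy coloring) contains a false step, namely the claim that the interaction is confined to ``a bounded (in $d$ and $v$) number of bands''. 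The enlargement of a very large ball can meet the enlargement of a disjoint, much smaller ball, and this can happen simultaneously at arbitrarily many scales. Concretely, take $v=\frac14$, $R_m=100^m$ and $L_m=B\big((4R_m+4,0,\dots,0),\,R_m\big)$ for $1\le m\le M$: these balls are pairwise disjoint, yet every $\frac1v L_m=B\big((4R_m+4,0,\dots,0),\,4R_m\big)$ contains the point $(4,0,\dots,0)$, so the $M$ enlarged balls pairwise intersect and any splitting with $\frac1v L\cap\frac1v L'=\emptyset$ inside each subfamily requires at least $M$ subfamilies. Within a single band your volume/packing argument is correct, but across bands the conflict graph can contain arbitrarily large cliques, so no coloring with a constant $Q_{d,v}$ depending only on $d$ and $v$ exists; this is precisely the step your second route (in effect, Lemma \ref{distcov}) would need, and it is where the whole difficulty of the statement sits.

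Worse, the obstruction is not an artifact of your method. Taking $E=\{(4R_m+4,0,\dots,0):1\le m\le M\}$ and $r_{(x_m)}=R_m$, each point of $E$ lies in no ball of $\mathcal F$ other than its own (the $L_m$ are pairwise disjoint and contain no other center), so any subfamilies of $\mathcal F$ whose union covers $E$ must contain all the $L_m$; combined with the previous observation, this shows that for $v<1$, with radii ranging over many scales, no selection scheme can produce the conclusion with a constant independent of the family. In other words, the statement in this generality cannot be established by the rescaling/coloring sketch, and a complete proof must use the precise hypotheses and formulation of \cite{ED2} (for instance, a control on the ratio of the radii, with the constant depending on it, or comparable radii within each family), or restrict to $v=1$, which is the only case used in this paper. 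Citing \cite{ED2}, as you say you would do in practice, matches the paper's treatment; but the short argument you propose to reproduce is not a proof, and its central packing claim fails as stated.
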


\begin{lemme}[\cite{ED2}] \label{dimconst}
Let $0<v\leq 1$, $\mathcal{B} =(B_n )_{n\in\mathbb{N}}$ a family of balls,   and  $B$  a ball such that  
\medskip
\begin{itemize}
\item [(i)]
$\forall \ n\geq 1$, $\vert B_n\vert \geq \frac{1}{2}\vert B\vert,$

\item [(ii)]
 $\forall \ n_1 \neq n_2 \geq 1 $, $vB_{n_1}\cap vB_{n_2}=\emptyset$. 
\end{itemize}
Then $B$  intersects  less than $Q_{d,v}$ elements of $\mathcal{B}$, where $Q_{d,v}$ can be taken equal to the constant considered in  Lemma~\ref{besimodi}. 
\end{lemme}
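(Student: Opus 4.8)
The statement to prove is Lemma~\ref{dimconst}: under hypotheses (i) and (ii), the ball $B$ meets fewer than $Q_{d,v}$ of the $B_n$.

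\medskip

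The plan is to deduce this directly from Lemma~\ref{besimodi} applied to a suitable finite point set. First I would discard trivialities: let $I=\{n : B_n\cap B\neq\emptyset\}$ and suppose for contradiction that $\# I\ge Q_{d,v}$; it suffices to treat a finite subcollection, so assume $I$ is finite with $\#I=Q_{d,v}$. Writing $B_n=B(x_n,r_n)$ for $n\in I$, I would form the finite family $\mathcal F=\{B(x_n,r_n):n\in I\}$ of balls centered on the finite set $E=\{x_n:n\in I\}$, and apply Lemma~\ref{besimodi} with the given $v$. This yields subfamilies $\mathcal F_1,\dots,\mathcal F_{Q_{d,v}}$ covering $E$, each consisting of balls whose $\tfrac1v$-dilates are pairwise disjoint.

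\medskip

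The key step is then a pigeonhole together with a containment argument. Since the $Q_{d,v}$ centers $x_n$ are covered by $Q_{d,v}$ subfamilies, either some subfamily $\mathcal F_i$ contains at least two of the balls $B_{n_1},B_{n_2}$ ($n_1\neq n_2$), or — if we want a clean contradiction — I note that hypothesis (ii) already says \emph{all} the $vB_n$ are pairwise disjoint, so in fact the decomposition into $Q_{d,v}$ subfamilies is not where the disjointness bites. The actual mechanism is: because every $B_n$ with $n\in I$ meets $B$ and has $|B_n|\ge\tfrac12|B|$, the dilated ball $\tfrac1v B_n$ contains $B$. Indeed if $y\in B_n\cap B$, then for any $z\in B$ one has (in the sup-norm) $\|z-x_n\|_\infty\le \|z-y\|_\infty+\|y-x_n\|_\infty\le |B|+r_n\le 2r_n+r_n$, wait — more carefully, $\|z-x_n\|_\infty\le |B|+r_n\le 2r_n + r_n = 3r_n$; to get the factor $1/v$ one uses that this must be compared against $r_n/v$, so the argument as stated needs $v$ small. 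So instead I would argue: all the balls $vB_n$, $n\in I$, are pairwise disjoint by (ii), yet each $B_n$ (hence in particular a fixed point of $B\cap B_n$, or the point closest to the center of $B$) lies within bounded distance of the center of $B$; combined with the lower bound on the radii $r_n\ge \tfrac14|B|$ and a volume/packing estimate in $(\mathbb R^d,\|\cdot\|_\infty)$, only boundedly many such disjoint balls can exist, and the bound is precisely the Besicovitch constant $Q_{d,v}$ by construction.

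\medskip

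Concretely, the cleanest route: apply Lemma~\ref{besimodi} to $E=\{x_n : n\in I\}$ and $\mathcal F=\{B(x_n,r_n):n\in I\}$; the centers are split among $Q_{d,v}$ families, so if $\#I> Q_{d,v}$ some family $\mathcal F_i$ contains two distinct balls $B_{n_1}$, $B_{n_2}$; but then $\tfrac1v B_{n_1}\cap\tfrac1v B_{n_2}=\emptyset$, whereas both $B_{n_1}$ and $B_{n_2}$ meet $B$ and satisfy $|B_{n_j}|\ge\tfrac12|B|$, which forces $B\subset \tfrac1v B_{n_1}$ and $B\subset\tfrac1v B_{n_2}$ (a triangle-inequality computation in $\|\cdot\|_\infty$, using $r_{n_j}\ge\tfrac14|B|$ so that $\tfrac1v r_{n_j}\ge \tfrac{1}{4v}|B|\ge |B|+r_{n_j}$ once one observes $\tfrac1v\ge 1$ — this is exactly the place where the constant $Q_{d,v}$ depending on $v$ is needed, and where one should be careful with the precise inequality), contradicting disjointness. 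Hence $\#I\le Q_{d,v}$. The main obstacle, and the only subtle point, is getting the containment $B\subset\tfrac1v B_n$ with the correct quantitative dependence on $v$ so that the Besicovitch constant from Lemma~\ref{besimodi} is exactly the right bound; everything else is bookkeeping with the triangle inequality.
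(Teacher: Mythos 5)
This lemma is imported from the companion paper \cite{ED2} and is not proved in the present article, so there is no in-paper argument to compare with; judged on its own terms, your proposal has genuine gaps at both of its key steps. First, the pigeonhole is invalid: Lemma \ref{besimodi} only asserts that the subfamilies $\mathcal F_1,\dots,\mathcal F_{Q_{d,v}}$ \emph{cover} the set of centers $E$, not that centers are matched to distinct balls; a single ball sitting in a single subfamily may cover every $x_n$ (for instance when one $B_{n_0}$ is much larger than the others), so $\#I>Q_{d,v}$ does not force any $\mathcal F_i$ to contain two distinct balls of the collection. Second, the containment $B\subset \frac{1}{v}B_{n_j}$ that is supposed to yield the contradiction is not available: hypothesis (i) gives $r_{n_j}\ge\frac14|B|$, so for $z\in B$ and $y\in B\cap B_{n_j}$ one only gets $\|z-x_{n_j}\|_\infty\le |B|+r_{n_j}$, and $B\subset\frac{1}{v}B_{n_j}$ requires $|B|\le r_{n_j}\frac{1-v}{v}$; with only the lower bound $r_{n_j}\ge\frac14|B|$ this forces $v\le\frac15$, and it fails outright for $v$ close to $1$ — in particular for $v=1$, which is exactly the case used in the proof of Theorem \ref{contss} (the bound $Q_{d,1}$ for disjoint balls $L$ with $|L|\ge|L_n|$). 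The inequality you write to justify it, $\frac{1}{4v}|B|\ge|B|+r_{n_j}$, moreover needs an upper bound on $r_{n_j}$ which is not a hypothesis. You flag this problem in the middle of the proposal, but the ``cleanest route'' silently reinstates the same false step.

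The fallback you sketch (a packing/volume estimate) is indeed the right kind of mechanism, but it is not mere bookkeeping and it is not carried out; in particular the claim that such a count gives ``precisely the Besicovitch constant $Q_{d,v}$ by construction'' is asserted, not proved. There is also a persistent conflation in the write-up: hypothesis (ii) concerns the \emph{shrunk} balls $vB_n$ (with $v\le 1$), whereas Lemma \ref{besimodi} provides disjointness of the $\frac{1}{v}$-\emph{dilated} balls within each subfamily; these are different disjointness properties, and the contradiction you aim for never settles on which one it uses. A correct argument must exploit the disjointness of the $vB_n$ directly: when the $B_n$ have diameter comparable to $|B|$ (as in both uses in this paper — equal radii with $v=\frac13$, or $v=1$ with $|B_n|\ge|B|$), each $vB_n$ contains a sub-ball of radius comparable to $v|B|$ lying within a fixed dilate of $B$, and a volume count there bounds the number of balls. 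The point your sketch glosses over is that for $v<1$ and $|B_n|\gg|B|$ the shrunk balls $vB_n$ need not meet any bounded dilate of $B$ at all, so this confinement step — the heart of the proof — is precisely what requires an argument (and the size comparability present in the paper's applications), rather than something that can be waved through.
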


The following lemma will also be useful later on and is also proved in \cite{ED2}.
\begin{lemme}
\label{distcov}
Let $\mathcal{L}$ be a family of pairwise disjoint balls satisfying $\sup_{L\in\mathcal{L}}\vert L\vert <+\infty.$ Then, for any $v\geq 1$, there exists sub-families $\mathcal{L}_1 ,...,\mathcal{L}_{Q_{d,v}}$ (where $Q_{d,v}$ is the constant of the same name in Lemma \ref{besimodi}) of $\mathcal{L}$ such that $\mathcal{L}=\bigcup_{1\leq i\leq Q_{d,v}}\mathcal{F}_i$ and for any $L\neq L^{\prime}\in\mathcal{L}_i$, $vL \cap vL^{\prime}=\emptyset.$
\end{lemme}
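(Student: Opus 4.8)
\textbf{Proof plan for Lemma \ref{distcov}.}

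The plan is to deduce this coloring statement from the previous geometric lemma, Lemma \ref{besimodi} (the modified Besicovitch covering lemma). First I would observe that Lemma \ref{besimodi} already produces, for \emph{any} family of balls $\mathcal F=\{B(x,r_{(x)}):x\in E\}$ centered on a set $E$, a decomposition into $Q_{d,v}$ subfamilies $\mathcal F_1,\dots,\mathcal F_{Q_{d,v}}$ such that within each $\mathcal F_i$ the $\frac1v$-dilates are pairwise disjoint, together with the covering property \eqref{besi}. The point here is that we are given a family $\mathcal L$ of \emph{pairwise disjoint} balls and we want to further split it into few subfamilies whose $v$-dilates (with $v\ge 1$, i.e.\ enlargements) are pairwise disjoint, and moreover we want the $\mathcal L_i$ to \emph{partition} $\mathcal L$ exactly, not merely cover it.

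The key step is a change of scale. Write each $L\in\mathcal L$ as $L=B(x_L,\rho_L)$, and let $E=\{x_L:L\in\mathcal L\}$ be the (at most countable) set of centers. Apply Lemma \ref{besimodi} to the family $\mathcal F=\{B(x_L, v\rho_L):L\in\mathcal L\}$ of the enlarged balls $vL$, with the parameter of that lemma chosen to be $v'=1$ (note $0<v'\le 1$ is allowed, and $Q_{d,1}$ is then the relevant constant; alternatively, if one insists on the parameter being exactly $v$, one uses $v'=1/v\le 1$ since $v\ge 1$, obtaining instead dilates of the $vL$'s by $1/v'=v$ which are even more separated — either way the constant is the one named $Q_{d,v}$ in the statement). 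The lemma yields subfamilies $\mathcal F_1,\dots,\mathcal F_{Q}$ with $\mathcal F=\bigcup_i\mathcal F_i$ and, inside each $\mathcal F_i$, the balls themselves pairwise disjoint (for $v'=1$ the ``$\frac1{v'}L$'' condition is just ``$L\cap L'=\emptyset$''). Translating back, each $\mathcal F_i$ corresponds to a subfamily $\mathcal L_i\subset\mathcal L$ such that for $L\neq L'\in\mathcal L_i$ one has $vL\cap vL'=\emptyset$. Since the covering \eqref{besi} reads $E\subset\bigcup_i\bigcup_{L\in\mathcal F_i}L$ and each $x_L\in L'$ for some $L'\in\mathcal F_i$, every center lies in some selected family; as the elements of $\mathcal L$ are pairwise disjoint each center belongs to exactly one $L\in\mathcal L$, so we do get that each $L\in\mathcal L$ is assigned to at least one $\mathcal L_i$. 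To make the $\mathcal L_i$ a genuine partition as claimed, simply remove each $L$ from all but one of the subfamilies to which Lemma \ref{besimodi} assigned it (removing elements only helps the disjointness condition on dilates), and discard empty families.

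The main obstacle — and it is a mild bookkeeping one rather than a deep one — is matching the indexing: Lemma \ref{besimodi} is phrased for a \emph{prescribed} set of centers $E$ and radii $r_{(x)}$, indexed by the points $x\in E$, whereas $\mathcal L$ is an abstract family of balls (possibly with repeated centers, in principle, though pairwise disjointness forbids two distinct balls sharing a center, so this is not actually an issue). One must be careful that applying Lemma \ref{besimodi} to the \emph{dilated} balls $vL$ rather than to $\mathcal L$ itself is what converts a ``$\frac1v$-contracted disjointness'' conclusion into the desired ``$v$-dilated disjointness'' conclusion, and that the constant $Q_{d,v}$ in the present statement is then, by construction, exactly the constant furnished by Lemma \ref{besimodi} for the appropriate parameter — which is precisely what the statement asserts. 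No further estimate is needed; the conclusion is purely combinatorial once the covering lemma is invoked at the right scale.
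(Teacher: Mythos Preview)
The paper does not prove this lemma; it simply cites the companion paper \cite{ED2}. So there is no in-paper proof to compare against, and your proposal must be judged on its own.

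Your overall strategy --- reduce to Lemma~\ref{besimodi} --- is the natural one, but there is a genuine gap in the step where you argue that every $L\in\mathcal L$ ends up in some $\mathcal L_i$. You apply Lemma~\ref{besimodi} to the \emph{dilated} family $\mathcal F=\{vL:L\in\mathcal L\}$. Its conclusion is only that the set of centers $E$ is covered by $\bigcup_i\bigcup_{L'\in\mathcal F_i}L'$; it does \emph{not} assert $\mathcal F=\bigcup_i\mathcal F_i$. So for a given $L\in\mathcal L$ you only know that $x_L\in vL_0$ for some $vL_0\in\mathcal F_i$. You then invoke pairwise disjointness of the original balls in $\mathcal L$ to conclude $L=L_0$, but this is not valid: the dilated balls $vL_0$ are in general \emph{not} pairwise disjoint (that is the whole point of the lemma), so $x_L\in vL_0$ does not force $x_L\in L_0$, and hence does not force $L_0=L$. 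Some balls of $\mathcal L$ may simply never be selected by Lemma~\ref{besimodi}.

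The fix is a one-line change: apply Lemma~\ref{besimodi} to $\mathcal L$ itself (not to the dilated family) with parameter $v'=1/v\in(0,1]$. The conclusion then reads: within each $\mathcal F_i$, the $\frac{1}{v'}$-dilates, i.e.\ the $vL$'s, are pairwise disjoint, which is exactly what you want. And now the covering condition $E\subset\bigcup_i\bigcup_{L'\in\mathcal F_i}L'$ involves the \emph{original} balls $L'\in\mathcal L$, which are pairwise disjoint; so $x_L\in L'$ forces $L=L'\in\mathcal F_i$, and every ball of $\mathcal L$ is indeed assigned to some subfamily. Your cleanup step (removing duplicates to get a partition) is then fine. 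Note incidentally that the constant produced this way is $Q_{d,1/v}$ in the notation of Lemma~\ref{besimodi}; the paper's naming ``$Q_{d,v}$'' in Lemma~\ref{distcov} (with $v\ge 1$) is a mild abuse of notation, since Lemma~\ref{besimodi} only defines $Q_{d,v}$ for $v\in(0,1]$.
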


Recall the following version of Frostman Lemma, due to Carleson.
%%%%%%%%%%%%%%%%%%%%%%%%%%%%%%%%%
\begin{proposition}[\cite{Ca}]
\label{carl}
Let $s\geq 0$.  There is a constant $\kappa_d >0$ depending only on the dimension $d$ such that for any bounded set $E\subset \R^d$ with $\mathcal{H}^{s}_{\infty}(E)>0$, there exists  a probability measure supported by $E$, that we denote by $m^{s}_{E}$, such that 
\begin{equation} 
\label{majcarl}
\mbox{for every ball $ B(x,r)$, 
 } \ \ \ \  m_{E}^{s}(B(x,r))\leq \kappa_d \frac{r^s}{\mathcal{H}^{s}_{\infty}(E)}.
 \end{equation}
\end{proposition}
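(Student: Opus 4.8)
The statement is the classical Frostman lemma in its Hausdorff-content normalisation, and the plan is to reprove it by the standard top-down mass-distribution argument on dyadic cubes, exploiting that closed balls of $(\R^d,\|\cdot\|_\infty)$ are exactly closed cubes. First I would reduce the setup. Replacing $E$ by a compact subset whose $s$-content is at least $\tfrac{1}{2}\mathcal H^s_\infty(E)$ (inner regularity of $\mathcal H^s_\infty$, which holds for the sets occurring in the applications and only costs a factor $2$ absorbed into $\kappa_d$), I may assume $E$ compact; after a dilation — under which \eqref{majcarl} is scale-invariant, since $r^s$ and $\mathcal H^s_\infty(E)$ scale the same way — I may assume $E\subset[0,1)^d$. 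Write $\mathcal D_k$ for the half-open dyadic cubes of side $2^{-k}$; each is a ball of $(\R^d,\|\cdot\|_\infty)$ of diameter $2^{-k}$, and a ball $B(x,r)$ with $2^{-k}\ge r$ meets at most $3^d$ of them. Finally note $\mathcal H^s_\infty(E)\le\mathcal H^{s,\mathrm{dy}}_\infty(E)$, the right-hand side being the content computed with dyadic cubes only (a subfamily of all admissible covers).

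Fix $N\in\mathbb N$. I would construct a finite measure $\mu_N$ by capping: start from the measure giving each $Q\in\mathcal D_N$ that meets $E$ the mass $|Q|^s=2^{-Ns}$, spread uniformly on $Q$; then, for $k=N-1,N-2,\dots,0$ in turn and for each $Q\in\mathcal D_k$, if the current mass of $Q$ exceeds $|Q|^s$ rescale the current measure inside $Q$ down to total mass exactly $|Q|^s$, otherwise leave it unchanged; let $\mu_N$ be the outcome after the step $k=0$. Two properties must be checked. (a) For every dyadic $Q$ of generation $k\le N$, $\mu_N(Q)\le|Q|^s$: right after the capping step at generation $k$ the mass of $Q$ is $\le|Q|^s$, and every subsequent step only multiplies masses inside ancestor cubes by factors $\le1$. (b) $\mu_N(\R^d)\ge\mathcal H^s_\infty(E)>0$ (and $\mu_N(\R^d)<\infty$ since $E$ is bounded): for $x\in E$, walk down the chain of dyadic cubes containing $x$; if no cap was ever applied along it, its generation-$N$ cube retains mass exactly $|Q|^s$; otherwise let $Q(x)$ be the cube of smallest generation along the chain to which a cap was applied — by minimality no ancestor of $Q(x)$ along the chain was ever capped, so $Q(x)$ still carries mass exactly $|Q(x)|^s$ in $\mu_N$. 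Hence the dyadic cubes $\{Q(x):x\in E\}$, on which $\mu_N$ is saturated, cover $E$; their maximal elements $Q^1,Q^2,\dots$ are pairwise disjoint, so $\mu_N(\R^d)\ge\sum_i\mu_N(Q^i)=\sum_i|Q^i|^s\ge\mathcal H^{s,\mathrm{dy}}_\infty(E)\ge\mathcal H^s_\infty(E)$. (Equivalently, (b) is the max-flow--min-cut theorem on the finite tree of dyadic cubes of generations $0,\dots,N$ with edge capacities $|Q|^s$.)

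Now set $\nu_N=\mu_N/\mu_N(\R^d)$, a probability measure supported in $\{x:\mathrm{dist}_\infty(x,E)\le 2^{-N}\}$ with $\nu_N(Q)\le|Q|^s/\mathcal H^s_\infty(E)$ for every dyadic $Q$ of generation $\le N$. For a ball $B(x,r)$ and $k$ with $2^{-k}\ge r>2^{-k-1}$, and provided $k\le N$, covering $B(x,r)$ by the at most $3^d$ cubes of $\mathcal D_k$ that it meets gives $\nu_N(B(x,r))\le 3^d\,2^{-ks}/\mathcal H^s_\infty(E)\le 3^d2^s\,r^s/\mathcal H^s_\infty(E)$. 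Since $E$ is compact all $\nu_N$ live in a fixed compact set, so a subsequence converges weak-$*$ to a probability measure $m^s_E$, whose support lies in $\bigcap_N\{x:\mathrm{dist}_\infty(x,E)\le 2^{-N}\}=E$. For any ball $B(x,r)$ and $\varepsilon>0$, the above estimate (valid for all $N$ beyond the generation attached to $r+\varepsilon$) together with the portmanteau inequality on the open ball of radius $r+\varepsilon$ yields $m^s_E(B(x,r))\le 3^d2^s(r+\varepsilon)^s/\mathcal H^s_\infty(E)$; letting $\varepsilon\to0$ and undoing the dilation gives \eqref{majcarl} with $\kappa_d=3^d2^s$, the factor $2^s$ being, as usual, absorbed (in every application here $s\le d$, so $\kappa_d$ is a dimensional constant).

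The one genuinely delicate point is (b): a cube that gets capped need not remain saturated in $\mu_N$, since a later cap on an ancestor can scale it strictly below $|Q|^s$; the fix is the observation that saturation propagates upward along any branch, so the smallest-generation capped cube above each point of $E$ is the one genuinely saturated at the end. Carrying this out carefully — or bypassing it via max-flow--min-cut — is the heart of the matter; the remaining ingredients (the dyadic-cube versus $\|\cdot\|_\infty$-ball comparisons and the weak-$*$ compactness passage) are routine bookkeeping.
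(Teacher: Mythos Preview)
The paper does not supply its own proof of this proposition: it is quoted from Carleson \cite{Ca} and used as a black box, so there is nothing in the paper to compare your argument against.

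Your argument is the standard dyadic mass-distribution proof of Frostman's lemma and is correct in substance. Two small points are worth making explicit. First, the reduction to compact $E$ via inner regularity of $\mathcal H^s_\infty$ is not valid for arbitrary bounded sets; it requires $E$ to be analytic (which covers every instance in the paper, where $E$ is always Borel), so your parenthetical hedge is doing real work and should perhaps be stated as a hypothesis. Second, your constant $3^d2^s$ depends on $s$, but the hypothesis $\mathcal H^s_\infty(E)>0$ for a bounded $E\subset\R^d$ forces $s\le d$, so $3^d2^s\le 6^d$ and the constant is genuinely dimensional; it would be cleaner to say this outright rather than leave it to the reader. The delicate step you flag in (b)---that one must take the \emph{smallest}-generation capped cube above each $x\in E$ to guarantee saturation survives all later rescalings---is handled correctly.
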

%%%%%%%%%%%%%%%%%%%%%%%%%%%%%%%%%

For  $s\geq 0$ and $E\subset \R^d$, a bounded subset such that $\mathcal{H}^{s}_{\infty}(E)>0$, $m^{s}_{E}$ will always denote such a measure associated with a (fixed) constant $\kappa_d$.

\sk

In the next two lemmas, the choice of the interval $[5,6]$ is convenient to take enough space between the shrunk balls involved in the construction elaborated in Section~\ref{construction}.

%%%%%%%%%%%%%%%%%%%%%%%%%%%%%%%%%

%%%%%%%%%%%%%%%%%%%%%%%%%%%%%%%%%
\begin{lemme} 
\label{doub2}

Let $t\in (5,6)$, $m\in\mathcal M(\R^d)$,  and $\ep  >0$.  Let $x\in \R^d$ be such that $\overline {\dim}_{\mathrm{loc}}(m, x) \leq \beta$. Let $C_{\beta,\varepsilon}= \frac{1}{2}6^{-\frac{\beta}{ 2 \ep  }}$. 
There exists an integer $n_{x}$ such that for every $ n\geq n_{x}$,  
\begin{equation}
\label{eq55}
\frac{\# \left\{0\leq k\leq n-1 : \ m(B(x,t^{-k-1}))\geq C_{\beta,\varepsilon} m(B(x,t^{-k}))\right\}}{n} \geq 1- \ep.
\end{equation}
\end{lemme}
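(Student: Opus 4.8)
The statement is essentially a quantitative version of the elementary fact that an upper local dimension bound $\overline{\dim}_{\mathrm{loc}}(m,x)\le\beta$ forces the ratios $m(B(x,t^{-k-1}))/m(B(x,t^{-k}))$ not to be too small \emph{on average}. The plan is to reason by contradiction, or equivalently by a direct telescoping/counting argument. Fix $x$ with $\overline{\dim}_{\mathrm{loc}}(m,x)\le\beta$, fix $\ep>0$, and set $C_{\beta,\ep}=\frac12 6^{-\beta/(2\ep)}$ as prescribed. Call an index $k\in\{0,\dots,n-1\}$ \emph{bad} if $m(B(x,t^{-k-1}))<C_{\beta,\ep}\, m(B(x,t^{-k}))$, and \emph{good} otherwise. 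The key observation is that along any block of indices, the mass $m(B(x,t^{-k}))$ only decreases as $k$ grows, and at each bad index it is multiplied by a factor strictly less than $C_{\beta,\ep}$; hence if there were at least $\ep n$ bad indices among $\{0,\dots,n-1\}$, then
\[
m(B(x,t^{-n}))\le m(B(x,t^{0}))\cdot C_{\beta,\ep}^{\,\#\{\text{bad } k<n\}}\le C_{\beta,\ep}^{\,\ep n},
\]
using $m\le 1$ and the monotonicity $m(B(x,t^{-k-1}))\le m(B(x,t^{-k}))$ for the non-bad steps.

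\textbf{Converting the mass bound into a local dimension bound.} Next I would take logarithms: since $t<6$, we have $t^{-n}\ge 6^{-n}$, so $\log(t^{-n})\ge -n\log 6$, and
\[
\frac{\log m(B(x,t^{-n}))}{\log(t^{-n})}\ \ge\ \frac{\ep n\log C_{\beta,\ep}}{-n\log 6}\ =\ \frac{-\ep\log C_{\beta,\ep}}{\log 6}.
\]
Now plug in $C_{\beta,\ep}=\frac12 6^{-\beta/(2\ep)}$: then $-\log C_{\beta,\ep}=\log 2+\frac{\beta}{2\ep}\log 6>\frac{\beta}{2\ep}\log 6$, so the right-hand side is strictly greater than $\ep\cdot\frac{\beta}{2\ep}\log 6/\log 6=\beta/2$. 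Actually, to control the \emph{limsup} of the ratio $\frac{\log m(B(x,r))}{\log r}$ one should interpolate to arbitrary radii $r\in(t^{-n-1},t^{-n}]$ rather than only the discrete radii $t^{-n}$; since $m(B(x,r))\le m(B(x,t^{-n}))$ and $\log r$ is comparable to $\log t^{-n}$ up to an additive $O(1)$ and a bounded multiplicative factor (because $t$ is a fixed constant in $(5,6)$), the same lower bound, say $\ge\beta/2$ for all large $n$, persists. Consequently, if the bad-index bound failed for infinitely many $n$, we would get $\overline{\dim}_{\mathrm{loc}}(m,x)\ge\liminf$ along that subsequence of the ratios $\ge\beta/2$ — wait, that is not yet a contradiction with $\le\beta$.

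\textbf{Sharpening: the contradiction must be with the $\limsup$.} Here is the point that needs care, and which I expect to be the main (mild) obstacle: bounding the mass at the single scale $t^{-n}$ only controls one value of the ratio, not a limsup. The correct route is to observe that if \eqref{eq55} fails for some $n\ge$ any threshold, i.e.\ if infinitely many $n$ have more than $\ep n$ bad indices in $\{0,\dots,n-1\}$, then for each such $n$ we obtain a radius $r_n\asymp t^{-n}\to 0$ with $\frac{\log m(B(x,r_n))}{\log r_n}$ strictly bounded below by a constant $c_0>\beta$ — and here one must re-examine the numerology so that the constant genuinely exceeds $\beta$, not merely $\beta/2$. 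Re-doing the estimate with the sharper bookkeeping: at a bad step the ratio is $<C_{\beta,\ep}$, and if there are $\ge\ep n$ bad steps then $m(B(x,t^{-n}))< C_{\beta,\ep}^{\ep n}$, giving
\[
\frac{\log m(B(x,t^{-n}))}{-n\log 6}>\frac{\ep n\,\bigl(\log 2+\tfrac{\beta}{2\ep}\log 6\bigr)}{n\log 6}=\frac{\ep\log 2}{\log 6}+\frac{\beta}{2}.
\]
This is $>\beta/2$ but not obviously $>\beta$, so the naive argument only yields $\overline{\dim}_{\mathrm{loc}}(m,x)>\beta/2$. The resolution is that the hypothesis $\overline{\dim}_{\mathrm{loc}}(m,x)\le\beta$ should be used to \emph{already bound from above} the mass at \emph{most} scales, and then the bad-index count is controlled by comparing: since $\overline{\dim}_{\mathrm{loc}}(m,x)\le\beta$, for all large $k$ one has $m(B(x,t^{-k}))\ge t^{-k(\beta+\eta)}$ for any fixed $\eta>0$ infinitely often — actually $\limsup_k \frac{\log m(B(x,t^{-k}))}{\log t^{-k}}\le\beta$ means $m(B(x,t^{-k}))\ge t^{-k(\beta+\eta)}$ for \emph{all} large $k$. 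Combining $m(B(x,t^{-n}))\ge t^{-n(\beta+\eta)}\ge 6^{-n(\beta+\eta)}$ with the contradiction hypothesis $m(B(x,t^{-n}))< C_{\beta,\ep}^{\ep n}=2^{-\ep n}6^{-n\beta/2}$ gives $6^{-n(\beta+\eta)}<2^{-\ep n}6^{-n\beta/2}$, i.e.\ $6^{-n\beta/2-n\eta}<2^{-\ep n}$, i.e.\ $n(\tfrac\beta2+\eta)\log 6>\ep n\log 2$; choosing $\eta$ small this fails as soon as $\tfrac\beta2\log6<\ep\log2$, which is \emph{not} automatic. Therefore the genuinely correct statement must be exploiting that \eqref{eq55} is allowed to fail only for finitely many $n$: I would instead argue that the set of $n$ violating \eqref{eq55} is finite by noting that each such $n$ produces, \emph{via a pigeonhole on where the bad indices sit}, a scale $r$ with $r\le t^{-\ep n/2}$ and $m(B(x,r))/m(B(x,r'))$ small over a proportional range, pushing one actual radius to have ratio exceeding $\beta$; the exponent $\beta/(2\ep)$ in $C_{\beta,\ep}$ is precisely calibrated so that a block of $\ge\ep n$ bad indices, occurring within the first $n$, must contain a sub-run long enough (length comparable to $\ep n$) to drive the ratio above $\beta$ at its right endpoint, contradicting $\overline{\dim}_{\mathrm{loc}}(m,x)\le\beta$. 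Thus the main obstacle is the combinatorial bookkeeping locating a long enough bad sub-block and matching constants; once that is set up, taking $\limsup$ in $n$ and invoking Definition~\ref{dim} closes the argument.
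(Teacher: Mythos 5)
Your telescoping step is exactly the paper's opening move: if at least $\ep n$ of the indices $k\in\{0,\dots,n-1\}$ are ``bad'', then $m(B(x,t^{-n}))\le C^{\ep n}$, hence $\frac{\log m(B(x,t^{-n}))}{\log t^{-n}}\ge \ep\frac{-\log C}{\log t}$ (the paper keeps only $\ep n/2$ bad steps and gets $\ep\frac{-\log C}{2\log t}$), and if this occurs for infinitely many $n$ one deduces $\overline{\dim}_{\mathrm{loc}}(m,x)\ge \ep\frac{-\log C}{2\log t}$. Where you stall --- the observation that with $C=C_{\beta,\ep}=\frac12 6^{-\beta/(2\ep)}$ this only yields a lower bound of order $\beta/2$, not $>\beta$, so there is no contradiction with $\overline{\dim}_{\mathrm{loc}}(m,x)\le\beta$ --- is correct, but your attempted repairs do not close the gap. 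The route via $m(B(x,r))\ge r^{\beta+\eta}$ leads, as you yourself computed, to a condition that is not automatic; and the final ``pigeonhole for a long bad sub-run'' cannot be made to work, because the bad indices need not cluster at all: take $d=1$, $\beta=1$, $\ep=\tfrac12$, $t=5.5$ (so $C_{\beta,\ep}=\tfrac1{12}$), and a measure consisting of atoms at the points $x+t^{-k}$ weighted so that $m(B(x,t^{-k-1}))/m(B(x,t^{-k}))$ equals $0.08<\tfrac1{12}$ for an equidistributed set of $k$'s of density $0.55$ and equals $1$ otherwise; then $\overline{\dim}_{\mathrm{loc}}(m,x)\approx 0.81\le\beta$ while the fraction in \eqref{eq55} is about $0.45<1-\ep$ for every large $n$. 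So the proposal contains a genuine gap: no complete argument is given, and the sketched fix would fail.

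What your difficulty actually uncovers is a slip in the lemma itself rather than a missing combinatorial idea. In the paper's proof the inequality $\beta\ge \ep\frac{-\log C}{2\log t}$ is declared ``equivalent to $C\ge t^{-\beta/(2\ep)}$'', whereas it is equivalent to $C\ge t^{-2\beta/\ep}$; consequently the contradiction argument is valid for any constant $C<t^{-2\beta/\ep}$, for instance $C=\frac12 6^{-2\beta/\ep}$ (or $\frac12 6^{-\beta/\ep}$ if one keeps the full count $\ep n$ of bad steps), but not for $\frac12 6^{-\beta/(2\ep)}$, as the example above shows. With the corrected constant, your first paragraph \emph{is} the paper's proof and it finishes the job: if \eqref{eq55} failed for infinitely many $n$, the telescoping bound would force $\overline{\dim}_{\mathrm{loc}}(m,x)\ge \ep\frac{-\log C}{2\log t}>\beta$, a contradiction, so \eqref{eq55} holds for all $n\ge n_x$. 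Since the rest of the paper uses $C_{\beta,\ep}$ only through its positivity (and monotonicity in $\ep$), the right move is to adjust the constant, not to search for a finer pigeonhole argument.
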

%%%%%%%%%%%%%%%%%%%%%%%%%%%%%%%%%
Previous lemma is a slight extension of result by K\"aenm\"aki \cite[Lemma 2.2]{C}, which shows such a property at $m$-almost every point (where one has necessarily $\overline {\dim}_{\mathrm{loc}}(m, x) \leq d)$, and uses $t$ integer (a choice that we could make). 

Thus, points with a given local dimension with respect to a measure $m$ are for most scales ``locally doubling''. 

%%%%%%%%%%%%%%%%%%%%%%%%%%%%%%%%%
\begin{proof}
Observe first that if for a constant $0<C\leq 1$ and  some integer $n\in\mathbb{N}$ one has 
$$\frac{\#\left\{1\leq k\leq n : \ m(B(x,t^{-k-1}))\geq Cm(B(x,t^{-k}))\right\}}{n} \leq 1-\varepsilon,$$
then there necessarily exist $N = \lfloor (n-1)\ep \rfloor $ integers  $0 <k_1 <\cdots < W_{n}<n $  such that
 for every $ 1\leq i\leq N$,   $m(B(x,t^{-k_i-1}))\leq Cm(B(x,t^{-k_i}))$.

  In particular,   writing $k_{N+1}=n$ and $k_0 =0$, this implies that
\begin{align*}
m(B(x,t^{-n}))& =\prod_{i=0}^{N}  \frac{m(B(x,t^{-k_{i+1} }))}{m(B(x,t^{-k_i}))}\leq \prod_{i=0}^{N}  \frac{m(B(x,t^{-k_{i}-1 }))}{m(B(x,t^{-k_i}))}\leq C^{N} \\
& \leq   C^{(n-1)\ep  } \leq   C^{ n \ep/2   }  = (t^{-n})^{\ep \frac{-\log(C)}{2\log(t)}}.
\end{align*}
The inequality $C^{(n-1)\ep  } \leq   C^{ n \ep/2   } $ occurs when $n$ is large enough. Recalling that $\overline{\dim}_{\mathrm{loc}}(m,x)\leq\beta$, if this happens for infinitely many $n$, one should have 
$$\beta\geq \limsup_{r\rightarrow 0^+}\frac{\log m(B(x,r))}{\log r}\geq  \ep\frac{-\log(C)}{2\log(t)},$$
which is equivalent to  $C\ge t^{-\frac{\beta}{2 \ep }}$.

 Setting $C_{ \ep ,\beta}=\frac{1}{2}6^{-\frac{\beta}{ 2 \ep  }}$, one concludes that  there exists $n_{x}$ such that for every $ n\geq n_{x}$, one necessarily has
$$\frac{\#\left\{0\leq k\leq n-1 : \ m(B(x,t^{-k-1}))\geq C_{ \ep ,\beta } m(B(x,t^{-k}))\right\}}{n} \geq 1- \ep ,$$ 
hence the result.
\end{proof}
%%%%%%%%%%%%%%%%%%%%%%%%%%%%%%%%% 

%%%%%%%%%%%%%%%%%%%%%%%%%%%%%%%%%
\begin{lemme} 
\label{doub}
Let $m$ and $\mu$  be two elements of $\mathcal M(\zu)$, $\beta \geq 0$ and $\varepsilon >0$. For every $x\in \R^d$ verifying $\overline {\dim}_{\mathrm{loc}}(m, x)\leq \beta$, there exists $\rho_x >0$ and $t_x\in (5,6)$ so that for all $0<r\leq \rho_x$ there exists $r\leq r'\leq r^{1-\varepsilon }$   such that 
\begin{equation}
\label{eqdoub}
m(B(x,r'/t_x))\geq C_{\beta,\frac{\ep}{2}}m(B(x,r')) \mbox{ and } \mu(\partial B(x,r'/t_x))=0.
\end{equation}
%In particular, when $\beta \geq d$,  this happens for  $m$-almost every $x\in \R^d^d$. 
\end{lemme}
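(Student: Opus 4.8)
\textbf{Proof plan for Lemma \ref{doub}.}

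The plan is to combine Lemma \ref{doub2} with a pigeonhole argument across consecutive scales, and then handle the boundary‑of‑ball condition by a separate (easy) observation that all but countably many radii give a $\mu$‑null sphere. Fix $x$ with $\overline{\dim}_{\mathrm{loc}}(m,x)\le \beta$. First I would apply Lemma \ref{doub2} with the parameter $\ep$ there replaced by $\ep/2$: for a given $t\in(5,6)$ there is an integer $n_{x}(t)$ such that for all $n\ge n_x(t)$,
$$
\frac{\#\{0\le k\le n-1:\ m(B(x,t^{-k-1}))\ge C_{\beta,\ep/2}\,m(B(x,t^{-k}))\}}{n}\ge 1-\tfrac{\ep}{2}.
$$
The point is that the exceptional indices $k$ (those for which the doubling‑type inequality fails) have upper density at most $\ep/2$; hence in any block of $\lceil 2/\ep\rceil$ consecutive integers $k,k+1,\dots$ (once $k$ is large enough) at least one index is good. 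Equivalently, for every sufficiently small $r>0$ there is an exponent $k$ with $t^{-k}\le r^{1-\ep}$ and $t^{-(k+1)}\ge r$ wait — more carefully: choose $k=k(r)$ to be the largest integer with $t^{-k}\ge r$, so $t^{-k}\in[r,\,tr)\subset[r,6r)$, and then look at the $\lceil 2/\ep\rceil$ indices $k,k+1,\dots$. Because $t>5$, after at most $\lceil 2/\ep\rceil$ steps the radius $t^{-j}$ is still $\ge t^{-k}\cdot 6^{-2/\ep-1}\ge r\cdot 6^{-2/\ep-1}$, and one can arrange (by taking $r\le \rho_x$ small, using $1-\ep$ versus $1$) that every such $t^{-j}$ lies in $[r,r^{1-\ep}]$; among these there is a good index $j$, and setting $r'=t^{-j}$ gives $m(B(x,r'/t))\ge C_{\beta,\ep/2}m(B(x,r'))$ with $r\le r'\le r^{1-\ep}$.

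The second ingredient is the choice of $t_x$ making $\mu(\partial B(x,r'/t_x))=0$. Here I would not try to choose $t_x$ once and for all before knowing $r'$; instead observe that the argument above in fact produces, for each small $r$, a whole \emph{interval} of admissible base parameters. Concretely, redo the above with $t$ ranging over $(5,6)$: the set of $t\in(5,6)$ for which there exists $r'\in[r,r^{1-\ep}]$ with $r'/t$ a $\mu$‑continuity radius of $B(x,\cdot)$ is cofinite in $(5,6)$ minus a countable set, since for fixed $r'$ only countably many values of $s>0$ satisfy $\mu(\partial B(x,s))>0$ (the spheres $\partial B(x,s)$ are disjoint for distinct $s$, and $\mu$ is finite). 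Thus after fixing $t_x\in(5,6)$ outside a $\mu$‑determined countable set, and shrinking $\rho_x$ if necessary so that Lemma \ref{doub2}'s conclusion holds from scale $\rho_x$ on, one gets for every $0<r\le\rho_x$ a radius $r'\in[r,r^{1-\ep}]$ satisfying both requirements in \eqref{eqdoub}. The cleanest packaging is: fix $t_x$ first so that $\{s:\mu(\partial B(x,s))>0\}$ avoids the countably many ``bad ratios'' that could arise — but since those ratios depend on $r'$, it is in fact simpler to keep $t_x$ generic in $(5,6)$ and note that for \emph{any} fixed generic $t_x$, only countably many radii $s$ give $\mu(\partial B(x,s))>0$, while the good scales $r'$ form an uncountable set in each range $[r,r^{1-\ep}]$; so we may pick a good $r'$ with $r'/t_x$ avoiding that countable set.

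\medskip
The main obstacle, as I see it, is the interface between the two conditions in \eqref{eqdoub}: Lemma \ref{doub2} only guarantees goodness at the discrete scales $t^{-k}$ for a \emph{fixed} $t$, so to gain the freedom to dodge the $\mu$‑atoms of the sphere function one must either (a) vary $t$ and track how the ``good index'' set changes, or (b) observe that ``most'' $r'$ near a good scale $t^{-k}$ are still good by near‑continuity of $r\mapsto m(B(x,r))$ and adjust. I expect (b) to be the slicker route: if $m(B(x,t^{-k-1}))\ge C_{\beta,\ep/2}m(B(x,t^{-k}))$, then for every $r'\in[t^{-k-1},t^{-k}]$ one has $m(B(x,r'/t_x))\ge m(B(x,t^{-k-1}/6))\ge$ (another application of the density bound, or a crude two‑step version) $\ge C_{\beta,\ep}m(B(x,t^{-k}))\ge C_{\beta,\ep}m(B(x,r'))$, using $C_{\beta,\ep}\le C_{\beta,\ep/2}$; this converts the discrete statement into an interval statement, after which choosing $r'$ in that interval with $\mu(\partial B(x,r'/t_x))=0$ is free since the forbidden radii are countable. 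One must of course check the numerology that $[t^{-k-1},t^{-k}]$ (or a short union of such intervals) meets $[r,r^{1-\ep}]$ for all small $r$, which follows from $5<t<6$ and $r^{1-\ep}/r=r^{-\ep}\to\infty$; this is routine bookkeeping and I would not belabor it.
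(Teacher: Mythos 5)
Your overall strategy is the paper's (apply Lemma \ref{doub2} with parameter $\ep/2$, find a ``good'' scale inside $[r,r^{1-\ep}]$, then exploit that only countably many radii give a sphere of positive $\mu$-measure), but the central pigeonhole step is wrong. From the density statement of Lemma \ref{doub2} --- at least a proportion $1-\ep/2$ of the indices $0\le k\le n-1$ are good --- you infer that every block of $\lceil 2/\ep\rceil$ consecutive indices contains a good one. A density bound gives no control on gap lengths: the up to $\ep n/2$ bad indices may form a single run ending at $n$, i.e.\ exactly at the scales comparable to $r$, so that for infinitely many $r$ \emph{all} indices $j$ with $t^{-j}\in[r,Cr]$ (any fixed constant $C$, in particular $C=6^{2/\ep+1}$) are bad. (There is also a direction slip: the indices $k+1,k+2,\dots$ you list correspond to radii $t^{-j}<r$, which exit $[r,r^{1-\ep}]$ from below.) The correct argument --- and the reason the lemma is stated with $r^{1-\ep}$ rather than a constant multiple of $r$ --- is to search the whole window of roughly $\ep n$ indices corresponding to radii between $r$ and $r^{1-\ep}$: since at most $\ep n/2$ indices in $\{0,\dots,n-1\}$ are bad, some $n'\in[(1-\ep)n,n]$ is good, and then $r'=t^{-n'+1}\in[r,r^{1-\ep}]$ once $r\le\rho_x$ is small. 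Your closing remark that it suffices to check that $[t^{-k-1},t^{-k}]$ ``or a short union of such intervals'' meets $[r,r^{1-\ep}]$ reflects the same misconception.

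The treatment of $\mu(\partial B(x,r'/t_x))=0$ also does not close as written. Your route (b) needs the doubling inequality for every $r'$ in an interval, which you obtain from a ``two-step'' use of the good-index property; but two consecutive good indices are not guaranteed by the density bound, and even if they were, two applications give the constant $C_{\beta,\ep/2}^2<C_{\beta,\ep/2}$, not the constant $C_{\beta,\ep/2}$ required by the statement (also note $C_{\beta,\ep}\ge C_{\beta,\ep/2}$, not $\le$, since $\ep\mapsto C_{\beta,\ep}$ is increasing). The paper's resolution is simpler and worth internalizing: for a fixed $t\in(5,6)$ the admissible $r'$ range over the \emph{countable} set $\{t^{-n}\}_{n\in\mathbb{N}}$, and replacing $t$ by any $t_x\in(5,t)$ only enlarges $B(x,r'/t_x)$, so the inequality $m(B(x,r'/t_x))\ge C_{\beta,\ep/2}\,m(B(x,r'))$ is preserved; one then fixes a single $t_x\in(5,t)$ avoiding the countably many values for which some ratio $r'/t_x$ carries a sphere of positive $\mu$-measure.
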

%%%%%%%%%%%%%%%%%%%%%%%%%%%%%%%%%

%%%%%%%%%%%%%%%%%%%%%%%%%%%%%%%%%
\begin{proof}
Consider $x\in \R^d$ such that $\overline{\dim}_{\mathrm{loc}}(m,x)\leq\beta.$

We apply Lemma \ref{doub2} to $x$ and the measure $m$, and for an arbitrary $t\in [5,6]$ and $\varepsilon^{\prime}=\frac{\varepsilon}{2}$: for $n\geq n_{x}$, there must be  an integer $n'$ such that  $n(1-\ep) \leq n'\leq n $ and $m(B(x,t^{-n'-1}))\geq C_{\beta,\frac{\varepsilon}{2}}m(B(x,t^{-n'})).$

Let $\rho_x=\min\left\{t^{-n_x-1 },t^{-\frac{1}{\varepsilon}}\right\}$. For $r\in(0,\rho_x]$, let   $n$ be the integer such that $t^{-n-1}<r\leq t^{-n}$. The previous claim yields an integer $n'\in [n(1-\frac{\ep}{2}),n] $  such that $m(B(x,t^{-n'}))\geq C_{\beta,\frac{\ep}{2}} m(B(x,t^{-n'+1}))$. Also, 
$$ r \leq r'=t^{-n'+1 }\leq t^{1-(1-\frac{\ep}{2} )n}= t^2 \cdot t^{-n-1}\cdot t^{ \frac{\ep}{2}  n}\leq t^2 \cdot r \cdot r^{-\frac{\ep}{2} }\leq r^{1- \ep }.$$
%the last inequality being true since we are interested in same $r$'s. 
%Since $\varepsilon$ is taken arbitrary, the result is proved for the measure $m$.
Consequently, 
$$
m(B(x,r'/t))\geq  C_{\beta,\frac{\ep}{2}}m(B(x,r')).
$$
The desired conclusion holds if we choose $t_x\in (5,t)$ such that $\mu(\partial B(x,r'/t_x))=0$. 
\end{proof}
%%%%%%%%%%%%%%%%%%%%%%%%%%%%%%%%%
The previous  lemma will be used in the case $\beta=d$ in our proof the main theorem (see step 2 of the construction in Section~\ref{construction}).

\mk

Next, we introduce some some sets  associated to a given element of $\mathcal M(\R^d)$, which will play a natural role in our construction. 
%%%%%%%%%%%%%%%%%%%%%%%%%%%%%%%%%

\begin{definition} 
\label{emudef}
 {Let $\beta\geq\alpha\geq 0$ be  real numbers, $m\in\mathcal{M}(\R^d)$, and $\varepsilon,\rho>0$ two positive real numbers. Then define }
\begin{equation}
\label{emuhat}
 {\widetilde{E}_{m}^{[\alpha,\beta],\rho,\varepsilon}=\left\{x\in\mathbb{R}^d : \ \underline\dim_{{\rm loc}}(m,x)\in[\alpha,\beta] \ and \  \ \forall r\leq \rho, \  m(B(x,r))\leq r^{\underline{\dim}_{\mathrm{loc}} (m,x)-\varepsilon}\right\}}
\end{equation} 
 and
\begin{equation}
\label{emu}
 {E_{m}^{[\alpha,\beta],\rho,\varepsilon}=\left\{ x\in\widetilde{E}_{m}^{[\alpha,\beta],\rho,\varepsilon}: \forall r\leq \rho, \  \frac{3}{4}m(B(x,r)) \leq m(B(x,r)\cap \widetilde{E}_{m}^{[\alpha,\beta],\rho,\varepsilon})\right\}}
\end{equation}

\end{definition}
 {Notice   that, for every $0 <\rho <\rho '$, one  has   $ E_{m}^{[\alpha,\beta],\rho',\varepsilon }\subset E_{m }^{[\alpha,\beta],\rho,\varepsilon }.$}
%%%%%%%%%%%%%%%%%%%%%%%%%%%%%%%%%

\begin{definition}
\label{emudef2}
 {Let $\beta \geq\alpha\geq 0$ be  real numbers, $m\in\mathcal{M}(\R^d)$, and $\varepsilon>0$.  Define}
\begin{equation} 
\label{emut}
 {E_{m}^{[\alpha,\beta],\varepsilon}=\bigcup_{n\geq 1}E_{m}^{[\alpha,\beta],\frac{1}{n},\varepsilon}} .
\end{equation}
\end{definition}

%%%%%%%%%%%%%%%%%%%%%%%%%%%%%%%%%%%%
\begin{proposition}
\label{mle}
{For every $m\in\mathcal{M}(\R^d)$, every $\beta\geq \alpha\geq 0$ and  $\varepsilon>0$,}
\begin{equation}
{m(E_{m}^{[\alpha,\beta],\varepsilon})=m(\left\{x:\underline{\dim}_{\mathrm{loc}}(m,x)\in [\alpha,\beta]\right\})}.
\end{equation}
\end{proposition}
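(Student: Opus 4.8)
The plan is to show that the set $A:=\{x:\underline{\dim}_{\mathrm{loc}}(m,x)\in[\alpha,\beta]\}$ differs from $E_m^{[\alpha,\beta],\varepsilon}=\bigcup_{n\ge 1}E_m^{[\alpha,\beta],1/n,\varepsilon}$ by an $m$-null set, by sandwiching $E_m^{[\alpha,\beta],\varepsilon}$ between $A$ and a full-measure subset of $A$. Since each $E_m^{[\alpha,\beta],1/n,\varepsilon}\subset\widetilde E_m^{[\alpha,\beta],1/n,\varepsilon}\subset A$ by definition, we automatically have $E_m^{[\alpha,\beta],\varepsilon}\subset A$ and hence $m(E_m^{[\alpha,\beta],\varepsilon})\le m(A)$; the whole content is the reverse inequality.

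First I would handle the $\widetilde E$-sets. Fix $\varepsilon>0$ and for each $n$ let $\widetilde E_n:=\widetilde E_m^{[\alpha,\beta],1/n,\varepsilon}$. These increase with $n$ (a smaller $\rho=1/n$ is a weaker constraint), so $\widetilde E:=\bigcup_n\widetilde E_n$ is their increasing union. The claim is $m(A\setminus\widetilde E)=0$. Indeed, if $x\in A$ then $\underline{\dim}_{\mathrm{loc}}(m,x)=:\gamma\in[\alpha,\beta]$, so by definition of the liminf there is $\rho_x>0$ with $m(B(x,r))\le r^{\gamma-\varepsilon}$ for all $r\le\rho_x$; choosing $n$ with $1/n\le\rho_x$ gives $x\in\widetilde E_n$. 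Thus actually $A\subset\widetilde E$, so $A=\widetilde E$ (the reverse inclusion being definitional), and in particular $m(\widetilde E_n)\uparrow m(A)$.

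Next comes the step from $\widetilde E$ to $E$, which I expect to be the main obstacle since it is a genuine density argument rather than a triviality. Fix $n$ and abbreviate $\widetilde E_n$ as above; I want to compare it with $E_n:=E_m^{[\alpha,\beta],1/n,\varepsilon}=\{x\in\widetilde E_n:\forall r\le 1/n,\ \tfrac34 m(B(x,r))\le m(B(x,r)\cap\widetilde E_n)\}$. The key tool is the Lebesgue density theorem for the measure $m$ in the form: for $m$-a.e.\ $x\in\widetilde E_n$, $\lim_{r\to 0}\frac{m(B(x,r)\cap\widetilde E_n)}{m(B(x,r))}=1$. (This holds for arbitrary Radon measures on $\R^d$ via Besicovitch's covering theorem — one may also invoke Lemma \ref{besimodi} here — and does not require $m$ to be doubling.) Hence for $m$-a.e.\ $x\in\widetilde E_n$ there is $\rho'_x>0$ with $\tfrac34 m(B(x,r))\le m(B(x,r)\cap\widetilde E_n)$ for $r\le\rho'_x$. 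Now pass to a finer scale: for such $x$, pick $N\ge n$ with $1/N\le\min\{1/n,\rho'_x\}$; then $x$ lies in the set obtained by using $\rho=1/N$ in \eqref{emu} with ambient set $\widetilde E_m^{[\alpha,\beta],1/N,\varepsilon}\supset\widetilde E_n$ — here one uses that the density inequality against the larger set $\widetilde E_m^{[\alpha,\beta],1/N,\varepsilon}$ is implied by the one against $\widetilde E_n$ — so $x\in E_m^{[\alpha,\beta],1/N,\varepsilon}\subset E_m^{[\alpha,\beta],\varepsilon}$. Consequently $m$-a.e.\ point of $\widetilde E_n$ lies in $E_m^{[\alpha,\beta],\varepsilon}$; taking the union over $n$ and using $\widetilde E=A$ gives $m(A\setminus E_m^{[\alpha,\beta],\varepsilon})=0$, i.e.\ $m(E_m^{[\alpha,\beta],\varepsilon})\ge m(A)$. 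Combined with the trivial reverse inequality this yields $m(E_m^{[\alpha,\beta],\varepsilon})=m(A)$, which is exactly the assertion. $\square$
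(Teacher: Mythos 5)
Your proof is correct and follows essentially the same route as the paper: the paper likewise reduces the statement to a Besicovitch-type density theorem (its Lemma \ref{densibesi}) applied to the sets $\widetilde{E}_{m}^{[\alpha,\beta],\rho,\varepsilon}$, combined with the monotonicity of these sets in $\rho$. The only difference is bookkeeping: the paper runs an $\varepsilon'$-approximation (choosing $\rho_{\varepsilon'}$, then $\tilde{\rho}_{\varepsilon'}$, and letting $\varepsilon'\to 0$), whereas you take countable unions over $\rho=1/n$ and argue $m$-almost everywhere directly, which is equivalent.
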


%%%%%%%%%%%%%%%%%%%%%%%%%%%%%%%%%%%

{Notice   that, for every $0<\rho ' <\rho$, one  has   $E_{m }^{[\alpha,\beta],\rho,\varepsilon }\subset E_{m}^{[\alpha,\beta],\rho',\varepsilon }.$}

%Furthermore if $\dimm(\mu)=\alpha>0$  (recall \eqref{dimmu}), then for every $\varepsilon>0$, for $\mu$-almost every $x \in \R^d$, there exists $n_x \in\mathbb{N}^{*}$ such that $\underline \dim_{{\rm loc}}(\mu,x)=\alpha $  and $ \forall r\leq {n_x}^{-1}$, $ \mu(B(x,r))\leq r^{\alpha-\varepsilon}$. One concludes that  when $\dimm(\mu)=\alpha$, then  for every $ \varepsilon >0$
%\begin{equation}
%\label{m1e}
%\mu(E_{\mu}^{\alpha,\varepsilon})=1. 
%\end{equation}
 
{ These sets  play a key role in the proofs  of Theorem \ref{lowani} .}
 \begin{proof}
{One first recalls the following result. }
 \begin{lemme}\cite{Be}
 \label{densibesi}
 {Let $m\in\mathcal{M}(\R^d)$ and $A$ be a Borel set with $m(A)>0.$ For every $r>0$, set}
 \begin{equation} 
 \label{defniv} 
 {A(r) =\left\{x\in A \ : \ \forall \tilde{r}\leq r, \ m(B(x,\tilde{r})\cap A)\geq \frac{3}{4}m(B(x,\tilde{r}))\right\}} 
 \end{equation}
 {Then }
 \begin{equation}
{ m\left(\bigcup_{r>0}A(r)\right)=m(A).}
 \end{equation}
 \end{lemme}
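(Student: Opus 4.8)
The plan is to derive Lemma~\ref{densibesi} from the classical density theorem for Radon measures on $\R^d$ (see \cite{Be}), the point being that this density theorem holds for an arbitrary $m\in\mathcal M(\R^d)$, with no doubling hypothesis, because it rests on the Besicovitch covering theorem rather than on a Vitali-type argument.

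\mk

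First I would recall the density statement in the form: for every Borel set $A$, one has $\lim_{\tilde r\to 0^+}\frac{m(B(x,\tilde r)\cap A)}{m(B(x,\tilde r))}=1$ for $m$-almost every $x\in A$ (apply the Besicovitch differentiation theorem to $\mathbf{1}_A\in L^1(m)$; note that for $m$-a.e.\ $x$ one has $m(B(x,\tilde r))>0$ for all $\tilde r>0$, while on the exceptional $m$-null set there is nothing to prove, since whenever $m(B(x,\tilde r))=0$ the two sides of the inequality defining $A(r)$ both vanish and so the condition is automatically met). Let $A_1$ denote the set of $x\in A$ achieving this density $1$, so that $m(A_1)=m(A)$.

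\mk

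Next, I would fix $x\in A_1$; since $3/4<1$, convergence of the density at $x$ furnishes an integer $n_x\geq 1$ such that $m(B(x,\tilde r)\cap A)\geq\frac34 m(B(x,\tilde r))$ for every $0<\tilde r\leq 1/n_x$, that is, $x\in A(1/n_x)$. Because the condition defining $A(r)$ only weakens as $r$ decreases, the family $(A(r))_{r>0}$ is nondecreasing as $r\downarrow 0$ and $\bigcup_{r>0}A(r)=\bigcup_{n\geq 1}A(1/n)$; thus the previous line gives the inclusions $A_1\subset\bigcup_{r>0}A(r)\subset A$.

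\mk

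It then remains to pass from these inclusions to the claimed measure equality. The maps $x\mapsto m(B(x,\tilde r))$ and $x\mapsto m(B(x,\tilde r)\cap A)$ are Borel for each fixed $\tilde r$ (they are upper semicontinuous in $x$, closed balls being decreasing limits of closed balls of slightly larger radius), and both are nondecreasing and right-continuous in $\tilde r$; hence the density function $x\mapsto\limsup_{\tilde r\to 0^+}\frac{m(B(x,\tilde r)\cap A)}{m(B(x,\tilde r))}$ equals its analogue with $\tilde r$ running over the positive rationals and is therefore Borel, so $A_1$ is $m$-measurable with $m(A_1)=m(A)$. Consequently $m_*\big(\bigcup_{r>0}A(r)\big)\geq m(A_1)=m(A)$, while $\bigcup_{r>0}A(r)\subset A$ yields $m^*\big(\bigcup_{r>0}A(r)\big)\leq m(A)$; so $\bigcup_{r>0}A(r)$ is $m$-measurable and $m\big(\bigcup_{r>0}A(r)\big)=m(A)$, as asserted. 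I expect the only genuinely delicate point to be this measurability bookkeeping (and the careful use of the non-doubling density theorem); everything else is a routine quotation of the classical result.
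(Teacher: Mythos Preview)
Your proof is correct and is precisely the standard derivation from the Besicovitch density/differentiation theorem. Note however that the paper does not supply its own proof of this lemma: it is merely \emph{recalled} inside the proof of Proposition~\ref{mle} with the citation \cite{Be}, and then used as a black box. So there is no ``paper's proof'' to compare against; your argument is exactly what one would write if asked to justify the cited statement, and the measurability bookkeeping you worry about is indeed the only point requiring any care.
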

 Note that it is clear from Definition \ref{dim} that 
 $$\left\{x:\underline{\dim}_{\mathrm{loc}}(m,x)\in [\alpha,\beta]\right\}=\bigcup_{\rho>0} \widetilde{E}_{m}^{[\alpha,\beta],\rho,\varepsilon}.$$
 
{ Let $\ep'>0$. By Definition \ref{dim}, there exists $\rho_{\ep'}$ small enough so that} 
\begin{equation}
\label{eqvo}
{m(\widetilde{E}_{m}^{[\alpha,\beta],\rho_{\ep'}, \varepsilon})\geq (1-\ep')m(\left\{x:\underline{\dim}_{\mathrm{loc}}(m,x)\in [\alpha,\beta]\right\}).}
\end{equation} 
 {By Lemma \ref{densibesi} (and the notations therein) applied to $\widetilde{E}_{m}^{[\alpha,\beta],\rho_{\ep'}, \varepsilon}$, there exists $\tilde{\rho}_{\ep'}$ such that} 
 \begin{equation}
 \label{eqvo2}
{ m(\widetilde{E}_{m}^{[\alpha,\beta],\rho_{\ep'}, \varepsilon}(\tilde{\rho}_{\ep'}))\geq (1-\ep')m(\widetilde{E}_{m}^{[\alpha,\beta],\rho_{\ep'}, \varepsilon}) .} 
 \end{equation}
{Finally for $\rho =\min\left\{\rho_{\ep'},\tilde{\rho}_{\ep'}\right\}$,  by Definition \ref{emudef} and \eqref{defniv}, one has  $ (\widetilde{E}_{m}^{[\alpha,\beta],\rho_{\ep'}, \varepsilon})_{\tilde{\rho}_{\ep'}}\subset E_{m}^{[\alpha,\beta],\rho, \varepsilon}$, so that, by \eqref{eqvo} and \eqref{eqvo2}}
 \begin{align*}
  m(E_{m}^{[\alpha,\beta],\rho, \varepsilon})
&\geq m((\widetilde{E}_{m}^{[\alpha,\beta],\rho_{\ep'}, \varepsilon}(\tilde{\rho}_{\ep'}))
 \geq (1-\ep')m(E_{m}^{[\alpha,\beta],\varepsilon})\\
 &\geq (1-\varepsilon^{\prime})^2 m(\left\{x:\underline{\dim}_{\mathrm{loc}}(m,x)\in [\alpha,\beta]\right\}).
 \end{align*} 
In particular
 $$m(\left\{x:\underline{\dim}_{\mathrm{loc}}(m,x)\in [\alpha,\beta]\right\})\geq m(E_{m}^{[\alpha,\beta],\varepsilon}) \geq (1-\varepsilon^{\prime})^2 m(\left\{x:\underline{\dim}_{\mathrm{loc}}(m,x)\in [\alpha,\beta]\right\}).$$
{Letting $\ep' \to 0$ proves the result.}
 \end{proof}
 \begin{corollary}
 \label{mlecoro}
For every $m\in\mathcal{M}(\R^d)$, for $\alpha=\underline{\dim}_H (m)$ and $\beta=\overline{\dim}_H (m)$, for any  $\varepsilon>0$, one has
\begin{equation}
m(E_{m}^{[\alpha,\beta],\varepsilon})=1.
\end{equation}
%Furthermore, for any $\rho>0$, $E_{m}^{[\alpha,\beta],\rho,\varepsilon}\subset \left\{x\in\R^d : \ \forall r\leq \rho,m(B(x,r))\leq r^{\alpha-\varepsilon}\right\}.$
 \end{corollary}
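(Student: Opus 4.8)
\textbf{Proof plan for Corollary \ref{mlecoro}.}

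The plan is to deduce the corollary directly from Proposition \ref{mle} by plugging in the specific choice $\alpha=\dimm(m)$ and $\beta=\overline{\dim}_P(m)$ (recall that in the excerpt's notation $\overline{\dim}_H(m)$ is the quantity denoted $\overline{\dim}_P(m)$ in Definition \ref{dim}). By Proposition \ref{mle}, for any $\varepsilon>0$ one has
\[
m(E_{m}^{[\alpha,\beta],\varepsilon})=m\big(\{x:\underline{\dim}_{\mathrm{loc}}(m,x)\in[\alpha,\beta]\}\big),
\]
so it suffices to show that the right-hand side equals $1$, i.e. that $m$-almost every $x$ satisfies $\underline{\dim}_{\mathrm{loc}}(m,x)\in[\dimm(m),\overline{\dim}_P(m)]$.

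The first step is the lower inequality: for $m$-a.e. $x$, $\underline{\dim}_{\mathrm{loc}}(m,x)\ge\dimm(m)$. This is immediate from the definition $\dimm(m)=\mathrm{ess\,inf}_m(\underline{\dim}_{\mathrm{loc}}(m,\cdot))$ in \eqref{dimmu}: the set where $\underline{\dim}_{\mathrm{loc}}(m,x)<\dimm(m)$ is, by definition of the essential infimum, $m$-null. The second step is the upper inequality: for $m$-a.e. $x$, $\underline{\dim}_{\mathrm{loc}}(m,x)\le\overline{\dim}_P(m)$. Here I would first note the pointwise inequality $\underline{\dim}_{\mathrm{loc}}(m,x)\le\overline{\dim}_{\mathrm{loc}}(m,x)$, valid at every $x\in\supp(m)$ since a liminf never exceeds the corresponding limsup; then, since $\overline{\dim}_P(m)=\mathrm{ess\,sup}_m(\overline{\dim}_{\mathrm{loc}}(m,\cdot))$, the set where $\overline{\dim}_{\mathrm{loc}}(m,x)>\overline{\dim}_P(m)$ is $m$-null, hence so is the smaller set where $\underline{\dim}_{\mathrm{loc}}(m,x)>\overline{\dim}_P(m)$. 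Intersecting the two full-measure sets (and noting $m$ is carried by $\supp(m)$) gives $m\big(\{x:\underline{\dim}_{\mathrm{loc}}(m,x)\in[\alpha,\beta]\}\big)=1$.

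I do not anticipate a genuine obstacle here; the only point requiring a little care is bookkeeping the notational identification $\overline{\dim}_H=\overline{\dim}_P$ as used in Definition \ref{dim}, and checking that both essential-extremum definitions are being applied in the correct direction so that the exceptional sets are indeed $m$-null. Combining with Proposition \ref{mle} then yields $m(E_{m}^{[\alpha,\beta],\varepsilon})=1$ for every $\varepsilon>0$, which is the claim.
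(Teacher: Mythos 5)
Your proof is correct and is precisely the route the paper intends: Corollary \ref{mlecoro} is left without an explicit proof as an immediate consequence of Proposition \ref{mle} together with the fact that $m$-almost every point has lower local dimension in $[\alpha,\beta]$, which is exactly what you verify via the essential infimum/supremum definitions and the pointwise bound $\underline{\dim}_{\mathrm{loc}}(m,x)\le\overline{\dim}_{\mathrm{loc}}(m,x)$. The only delicate point is the paper's ambiguous notation $\overline{\dim}_H(m)$; under the alternative reading $\overline{\dim}_H(m)=\mathrm{ess\,sup}_m\,\underline{\dim}_{\mathrm{loc}}(m,\cdot)$ the same argument is even more direct (the exceptional set is null by the very definition of the essential supremum), so the conclusion is unaffected either way.
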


%:
%
%%%%%%%%%%%%%%%%%%%%%%%%%%%%%%%%%%

%%%%%%%%%%%%%%%%%%%%%%%%%%%%%%%%%%
%

%%%%%%%%%%%%%%%%%%%%%%%%%%%%%%%%%
%%%%%%%%%%%%%%%%%%%%%%%%%%%%%%%%%
%%%%%%%%%%%%%%%%%%%%%%%%%%%%%%%%%
%%%%%%%%%%%%%%%%%%%%%%%%%%%%%%%%%
%%%%%%%%%%%%%%%%%%%%%%%%%%%%%%%%%
%%%%%%%%%%%%%%%%%%%%%%%%%%%%%%%%%
%%%%%%%%%%%%%%%%%%%%%%%%%%%%%%%%%

 %%%%%%%%%%%%%%%%%%%%%%%%%%%%%%%%%

%%%%%%%%%%%%%%%%%%%%%%%%%%%%%%%%%

%%%%%%%%%%%%%%%%%%%%%%%%%%%%%%%%%
%%%%%%%%%%%%%%%%%%%%%%%%%%%%%%%%%
%%%%%%%%%%%%%%%%%%%%%%%%%%%%%%%%%
%%%%%%%%%%%%%%%%%%%%%%%%%%%%%%%%%
%%%%%%%%%%%%%%%%%%%%%%%%%%%%%%%%%
%%%%%%%%%%%%%%%%%%%%%%%%%%%%%%%%%
%%%%%%%%%%%%%%%%%%%%%%%%%%%%%%%%%
%%%%%%%%%%%%%%%%%%%%%%%%%%%%%%%%%
%%%%%%%%%%%%%%%%%%%%%%%%%%%%%%%%%
%%%%%%%%%%%%%%%%%%%%%%%%%%%%%%%%%
\subsection{Construction of the Cantor set and the measure}\label{construction}

  Recall that $\mu$ is a probability measure on $\R^d$, and that $\mathcal{B} =(B_n :=B(x_n ,r_n))_{n\in\mathbb{N}}$ is a $\mu$-a.c sequence of balls of $ \R^d$ with $\lim_{n\to +\infty} r_{n}= 0.$ Fix $\mathcal{U}=(U_n)_{n\in\mathbb{N}}$ a sequence of open sets satisfying $U_n \subset B_n$ for every $n\in\mathbb{N}$. 
 
Set $\alpha=\dimm(\mu)$, and assume that $\min\left\{ s(\mu,\mathcal{B},\mathcal{U}),\alpha\right\}>0$. 

Our goal is to  construct a gauge function $\zeta:\R^+\to \R^+$ such that $\lim_{r\to 0^+} \frac{\log \zeta(r)}{\log r} =\min\left\{ s(\mu, \mathcal{B},\mathcal{U}),\dimm(\mu)\right\}$ as well as $\eta\in \mathcal M(\R^d)$ supported on $\limsup_{n\to\infty} U_n$ such that  for all $r\in(0,1]$ and $x\in\R^d$ one has $\eta(B(x,r))\le \zeta(2r)$.

Let $(\varepsilon_k)_{k\in\mathbb{N}}$ be a sequence decreasing to 0 and such that $\varepsilon_1<s(\mu,\mathcal{B},\mathcal{U})$.   For $k\geq 0$, set
\begin{equation}
\label{defskani}
s_k=\min\left\{ s(\mu,\mathcal{B},\mathcal{U}),\alpha\right\}-\varepsilon_{k}.
\end{equation}

Along the construction of $\zeta$, we only use that $s_k < s(\mu,\mathcal{B},\mathcal{U})$ and the fact that $s_k < \alpha$  is  used at the end of our analysis (see equation \eqref{res3ani}).

%%%%%%%%%%%%%%%%%%%%%%%%%%%%%%%%%
%%%%%%%%%%%%%%%%%%%%%%%%%%%%%%%%%
%%%%%%%%%%%%%%%%%%%%%%%%%%%%%%%%%
%%%%%%%%%%%%%%%%%%%%%%%%%%%%%%%%%
\subsubsection*{Step 1} We need the following lemma.
\begin{lemme} [\cite{ED2}]
\label{covO}
Let   $\mu\in\mathcal{M}(\R^d)$  and $\mathcal{B} =(B_n :=B(x_n ,r_n))_{n\in\mathbb{N}}$ be a $\mu$-a.c sequence of balls of $ \R^d$ with $\lim_{n\to +\infty} r_{n}= 0$.

Then  for every open set $\Omega$ and every integer $g\in\mathbb{N}$, there exists  a subsequence  $(B_{(n)}^{(\Omega)})\subset \left\{B_n \right\}_{n\geq g} $ such that:
\begin{enumerate}
\item 
$\forall \,  n\in\mathbb{N}$, $B_{(n)}^{(\Omega)}\subset \Omega,$
\sk
\item
 $\forall \, 1\leq n_1\neq n_2$, $B_{(n_1)}^{(\Omega)}\cap B_{(n_2)}^{(\Omega)}=\emptyset$,
 \sk
\item
 $\mu\left (\bigcup_{n\geq 1}B_{(n)}^{(\Omega)}\right)= \mu(\Omega).$
\end{enumerate}
In addition, there exists an integer $N_{\Omega}$ such that  for the balls $ (B_{(n)}^{(\Omega)})_{n=1,...,N_\Omega}$, the conditions (1) and (2) are realized, and (3) is replaced by  $\mu\left (\bigcup_{n=1}^{N_\Omega} B_{(n)}^{(\Omega)}\right)\geq \frac{3}{4} \mu(\Omega).$  
\end{lemme}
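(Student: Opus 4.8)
\textbf{Proof plan for Lemma~\ref{covO}.}

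The plan is to derive the stated result from the definition of the $\mu$-a.c.\ property (Definition~\ref{ac}) by an iterated exhaustion argument, where at each step one applies the $\mu$-a.c.\ property to the part of $\Omega$ not yet covered by previously chosen balls. First I would fix the open set $\Omega$ and the integer $g$, and fix the constant $C>0$ coming from the $\mu$-a.c.\ property of $\mathcal B$. Applying Definition~\ref{ac} to $\Omega$ and $g$ produces a finite disjoint family $B_{n_1},\dots,B_{n_{N}}$ of balls from $\{B_n\}_{n\ge g}$, all contained in $\Omega$, with $\mu(\bigcup_i B_{n_i})\ge C\mu(\Omega)$. Call this first batch $\mathcal G_1$, and set $\Omega_1=\Omega\setminus\bigcup_{B\in\mathcal G_1}\overline B$, which is again open. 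Then $\mu(\Omega_1)\le (1-C)\mu(\Omega)$.

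Next I would iterate: having built disjoint finite batches $\mathcal G_1,\dots,\mathcal G_j$ of balls from $\{B_n\}_{n\ge g}$, all pairwise disjoint and contained in $\Omega$, with $\mu(\Omega_j)\le (1-C)^j\mu(\Omega)$ where $\Omega_j=\Omega\setminus\bigcup_{B\in\mathcal G_1\cup\cdots\cup\mathcal G_j}\overline B$, apply the $\mu$-a.c.\ property to the open set $\Omega_j$ and the integer $g$ to obtain a new finite disjoint batch $\mathcal G_{j+1}\subset\{B_n\}_{n\ge g}$ with each ball contained in $\Omega_j$ (hence in $\Omega$ and disjoint from all previously chosen balls, since $\Omega_j$ excludes their closures) and $\mu(\bigcup_{B\in\mathcal G_{j+1}}B)\ge C\mu(\Omega_j)$, which gives $\mu(\Omega_{j+1})\le(1-C)\mu(\Omega_j)\le(1-C)^{j+1}\mu(\Omega)$. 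The one subtle point is the choice of open sets to remove: I delete the \emph{closed} balls so that the new candidates, which lie in the open remainder, are genuinely disjoint from the earlier ones; since the balls of $\mathcal B$ are closed this is exactly what Definition~\ref{ac}(ii) and the requirement ``$B_{n_i}\subset\Omega$'' allow, because an open set $\Omega_j$ can still contain closed balls of $\mathcal B$ that happen to fit inside. (If one is worried that removing closures might lose a positive fraction of $\mu(\Omega_j)$, one instead shrinks each ball of $\mathcal G_j$ slightly, or invokes that $\mu(\partial B)$ contributes nothing to the argument because at each stage we only need a lower bound $C\mu(\Omega_j)$ on the newly covered mass and an upper bound on what remains; this bookkeeping is routine.)

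Concatenating all the batches $\bigcup_{j\ge 1}\mathcal G_j$ and enumerating them as $(B^{(\Omega)}_{(n)})_{n\ge 1}$ gives a countable subfamily of $\{B_n\}_{n\ge g}$ that is pairwise disjoint and contained in $\Omega$, proving (1) and (2); and since $\mu(\Omega_j)\le(1-C)^j\mu(\Omega)\to 0$, the complement in $\Omega$ of the union has zero $\mu$-measure, so $\mu(\bigcup_{n\ge1}B^{(\Omega)}_{(n)})=\mu(\Omega)$, proving (3). For the final assertion, since the series $\sum_n \mu(B^{(\Omega)}_{(n)})=\mu(\bigcup_n B^{(\Omega)}_{(n)})=\mu(\Omega)$ converges, there is an $N_\Omega$ with $\sum_{n\le N_\Omega}\mu(B^{(\Omega)}_{(n)})\ge\frac34\mu(\Omega)$, and by disjointness $\mu(\bigcup_{n\le N_\Omega}B^{(\Omega)}_{(n)})\ge\frac34\mu(\Omega)$, which is what is claimed. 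The main obstacle, such as it is, is purely the topological bookkeeping in passing from ``disjoint from previously chosen closed balls'' to ``contained in an open set to which Definition~\ref{ac} applies''; everything else is a geometric-series estimate.
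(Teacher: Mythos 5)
Your proposal is correct and follows the expected route: iterating Definition~\ref{ac} on the open sets $\Omega_j=\Omega\setminus\bigcup(\text{balls already chosen})$ (which stay open since each batch is finite and the $B_n$ are closed, so your worry about taking closures is vacuous) gives $\mu(\Omega_j)\le(1-C)^j\mu(\Omega)$, whence the countable disjoint subfamily satisfies (1)--(3), and the final assertion follows from (3) by $\sigma$-additivity. This is essentially the same approach as the source: the paper itself defers the proof of (1)--(3) to \cite{ED2} and only records that the last part follows from item (3) and $\sigma$-additivity, exactly as in your last step.
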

%%%%%%%%%%%%%%%%%%%%%%%%%%

The last part of Lemma \ref{covO} simply follows from item (3) and the $\sigma$-additivity of $\mu$.

 Using Lemma \ref{covO} with, $(B_n)_{n\in \mathcal{N}_\mu(\mathcal{B},\mathcal U,s_1)}$ (which is $\mu$-a.c since $ s_1 <s(\mu,\mathcal{B},\mathcal{U})$), $g=0$  and $\Omega = \R^d$,   one finds integers $N_1 $ and $n_1 <  ... < n_{N_1}\in\mathcal{N}_\mu(\mathcal{B},\mathcal U,s_1)$ such that :
 \begin{itemize}
\item[$(i):$] $\forall \,  1\leq i\leq N_1$, $B_{n_i}\cap B_{n_j}=\emptyset$,
\item[$(ii):$] $\mu(\bigcup_{1\leq i\leq N_1}B_{n_i})\geq \frac{1}{2}$.

\end{itemize}

By Lemma \ref{distcov} applied to $\left\{B_{n_i}\right\}_{1\leq i\leq N_1}$ and $v=4$, the balls $\left\{B_{n_i}\right\}_{1\leq i\leq N_1}$ can be sorted in $Q_{d,4}$  families of balls $\mathcal{L}_1 ,...,\mathcal{L}_{Q_{d,4}}$ such that 
\begin{itemize}
\item[•] for any $1\leq i\leq Q_{d,4}$, any $L\neq L^{\prime}\in\mathcal{L}_i$, $4L \cap 4L^{\prime}=\emptyset, $\mk
\item[•] $\bigcup_{1\leq i\leq Q_{d,4}}\mathcal{L}_i=\left\{B_{n_i}\right\}_{1\leq 1\leq N_1}.$
\end{itemize}

At least one of these families, $\mathcal{L}_{i_0 }$, must satisfy
$$
\mu\big(\bigcup_{L\in\mathcal{L}_{i_0}}L\big)\geq \frac{1}{2Q_{d,4}}.
$$

In particular, if one must rename the balls of the family $\mathcal{L}_{i_0}$, we can assume that the family $\left\{B_{n_i}\right\}_{1\leq i\leq N_1}$ satisfies
\begin{itemize}
\item[$(i^{'}):$] for any $1\leq i< j\leq N_1$, $4 B_{n_i}\cap 4B_{n_j}=\emptyset$\mk
\item[$(ii^{\prime}):$] and
\begin{equation}
\label{recouetape1}
\mu\big(\bigcup_{1\leq i\leq N_1}B_{n_i}\big)\geq \frac{1}{2Q_{d,4}}.
\end{equation}
\end{itemize}

Set $$\mathcal{W}_{1}=\left\{U_{n_i} \right\}_{1\leq i\leq N_1}  \ \ \mbox{ and } \ \ W_{1}=\bigcup_{1\leq i\leq N_1}U_{n_i} .$$

Along the construction of the Cantor set, for every $U \in\mathcal{U}$, the ball of $\mathcal{B}$ naturally associated with $U$ will be denoted $B^{[U]}$ (that is $B^{[U_n]}=B_n$). 

The pre-measure $\eta$ on the $\sigma$-algebra generated by the  sets  of $\mathcal{W}_{1} $ is defined by   
\begin{equation}
\label{defeta0}
\mbox{ for  every $U \in \mathcal{W}_1$, } \ \ \ \ \eta(U )=\frac{\mu(B^{[U]})}{\sum_{\widetilde{ U} \in \mathcal{W}_1}\mu(B^{[\widetilde{U}]})} . 
\end{equation}
It is obvious that $\eta(\R^d) = \eta (W_{1}) =1$.

  Recalling \eqref{defNsdani} and \eqref{defsmdani}, since $s_1<s(\mu , \mathcal{B},\mathcal{U})$,   the sub-sequence   $(B_n )_{n\in\mathcal{N}_{\mu}(\mathcal{B},\mathcal{U},s_1 )} $ is $\mu$-a.-c.  Recall  also that $\lim_{n\to +\infty}r_n=0$ and for every  $n\in \N$,  $\vert U_n \vert \leq r_n.$

  So,  for every $ n\in\mathcal{N}_{\mu}(\mathcal{B},\mathcal{U},s_1)$,
  \begin{equation}
  \label{eqmunani}
 \mathcal{H}^{\mu,s_1}_{\infty}\left (U_n \right)\geq \mu( B_n ) \   \ \ \ \ \ \  \mbox{ and } \ \ \ \vert U_n \vert\leq r_n.
  \end{equation}
In particular, by Definition \ref{mucont}, for every $n\in\mathcal{N}_{\mu}(\mathcal{B},\mathcal{U},s_1) $ for any set $E_n \subset U_n $ with $\mu(E_n)=\mu(U_n)$, 
$$
\mu(B_n)\leq \mathcal{H}^{\mu,s_1}_{\infty}(U_n)\leq \mathcal{H}^{s_1} _{\infty}(E_n).
$$
By Lemma \ref{carl}, and the notations therein,  one has 
$$
m_{E_n}^{s_1}  (U_n)=1 \leq \frac{\kappa_d \vert U_n \vert ^{ {s_1} }}{\mathcal{H}^{{s_1} }_{\infty}(E_n)}\leq \frac{\kappa_d \vert U_n \vert ^{ {s_1} }}{\mu(B_n)}.
$$
This implies that 
\begin{equation}
\label{bonnemajoani}
\mu(B_n)\leq \kappa_d \vert U_n \vert ^{ {s_1} }.
\end{equation}

By equation  \eqref{bonnemajoani}, recalling the fact that the sets $ \mathcal{W}_1 \subset \left\{U_n\right\}_{n\in\mathbb{N}}$ , one has for every  $U\in \mathcal{W}_1 ,$
\begin{equation}
\label{eq12}
\eta(U )\leq \frac{\mu(B^{[U]})}{\frac{1}{2Q_{d,4}} }\leq  2Q_{d,4}\kappa_d  |U | ^{s_1  }. 
\end{equation}

%%%%%%%%%%%%%%%%%%%%%%%%%%%%%%%%%
%%%%%%%%%%%%%%%%%%%%%%%%%%%%%%%%%
%%%%%%%%%%%%%%%%%%%%%%%%%%%%%%%%%
%%%%%%%%%%%%%%%%%%%%%%%%%%%%%%%%%
\subsubsection*{Step 2} This step (and all the following steps) is split    into two sub-steps. First,  into each open set $U$ of $\mathcal{W}_1$,   smaller intermediary balls are selected according to the $\mu$-essential content of $U$. Then in a second time, each intermediary ball  will be covered by balls of the sequence $(B_n)_{n\in\mathbb{N}}$ according to the measure $\mu$    and, as in step 1, the  sets $U_n$ associated with this covering will form the generation $\mathcal{W}_2$.

\sk

Let $g\in \N$ be such that for every $n\geq g$, $r_n\leq \frac{1}{3}\min(|U|:U\in \mathcal{W}_1)$.

As above,  since $s_2<s(\mu , \mathcal{B},\mathcal{U})$,   the sub-sequence   $(B_n )_{n\in\mathcal{N}_{\mu}(\mathcal{B},\mathcal{U},s_2), n\geq g  } $ is $\mu$-a.c.    The same arguments as above yield  for every $ n\in\mathcal{N}_{\mu}(\mathcal{B},\mathcal{U},s_2) $,
  \begin{equation}
  \label{eqmun2ani}
 \mathcal{H}^{\mu,s_2}_{\infty}\left (U_n\right)\geq \mu( B_n ) \    \ \ \ \ \ \  \mbox{ and } \ \ \ \vert U_n \vert \leq r_n
  \end{equation}
and  
\begin{equation}
\label{bonnemajo2ani}
\mu(B_n)\leq \kappa_d \vert U_n \vert^{ {s_2} }.
\end{equation}

%%%%%%%%%%%%%%%%%%%%%%%%%%%%%%%%%
%%%%%%%%%%%%%%%%%%%%%%%%%%%%%%%%%
%%%%%%%%%%%%%%%%%%%%%%%%%%%%%%%%%
%%%%%%%%%%%%%%%%%%%%%%%%%%%%%%%%%
\subsubsection*{Covering with respect to the $\mu$-essential content} 
 
Consider $U \in\mathcal{W}_{1}$. Set  $\beta=\overline\dim_H(\mu)$. For $0\leq k\leq \lfloor \frac{\beta-\alpha}{\varepsilon_2}\rfloor+1$, define $\theta_k =\alpha+k\varepsilon_2 $. 
Write  
\begin{equation}
\label{defELani}
E_{U}= U\cap E_{\mu}^{[\alpha,\beta],\varepsilon_2} \cap \limsup_{n\to +\infty }B_n.
\end{equation}
 Notice that by Proposition \ref{mle} and by item (1) of Lemma~\ref{equiac}, one has $\mu(E_{U})=\mu(U)$. 
 
 In addition, using the definition \eqref{eqmucont} of $ \mathcal{H}^{\mu,s_2}_{\infty}$, the fact that  $E_U\subset U$ and $\mu(E_{U})=\mu(U)$, and finally \eqref{defsndani} applied with $B_n = B^{[U]}$, one gets 
\begin{equation}
\label{eq21ani}
\mathcal{H}_{\infty}^{s_2}(E_U)\geq \mathcal{H}^{\mu,s_2}_{\infty}(U)\geq \mu(B^{[U]})>0.
\end{equation}

 This allows us to apply Proposition \ref{carl}: there exists  a Borel probability measure $m_{E_U}^{s_2}$ supported on $E_{U}$ such that for every ball $B:=B(x,r)$, one has $$m_{E_U}^{s_2}(B)\leq \kappa_d \frac{r^{s_2}}{ \mathcal{H}^{s_2}_{\infty}(E_U)}.$$ 
 
 Also, since  $m_{E_U}^{s_2}(E_U)=1$ and $E_U \subset  E_{\mu}^{[\alpha,\beta],\varepsilon_2}$, and recalling \eqref{emut}, for any $0\leq k\leq \lfloor \frac{\beta-\alpha}{\varepsilon_2}\rfloor+1$,  there exists $\rho_{k,\varepsilon_2}$ such that 
 $$m_{E_U}^{s_2}(E^{[\theta_k,\theta_{k+1}],\rho_{k,\varepsilon_2},\varepsilon_2}_{\mu})\geq \frac{1}{2}m_{E_U}^{s_2}(E^{[\theta_k,\theta_{k+1}],\varepsilon}_{\mu}).$$

  Setting   $\rho_U =\min_{0\leq k\leq \lfloor \frac{\beta-\alpha}{\varepsilon_2}\rfloor+1}\rho_{k,\varepsilon_2}$ one has, for any $0\leq k\leq \lfloor \frac{\beta-\alpha}{\varepsilon_2}\rfloor +1$, 
 \begin{equation}
 \label{equarecti}
 m_{E_U}^{s_2}(E^{[\theta_k,\theta_{k+1}],\rho_{U},\varepsilon_2}_{\mu})\geq \frac{1}{2}m_{E_U}^{s_2}(E^{[\theta_k,\theta_{k+1}],\varepsilon}_{\mu}).
 \end{equation}
 In particular,
\begin{equation}
m_{E_U}^{s_2}(E_{\mu}^{[\alpha,\beta],\rho_U ,\varepsilon_2})\geq \frac{1}{2}
\end{equation}

\mk

Let
\begin{equation}
\label{defSLani}
S_U := \bigcup_{0\leq k\leq \lfloor\frac{\beta-\alpha}{\varepsilon_2}\rfloor+1}E^{[\theta_k,\theta_{k+1}],\rho_{U},\varepsilon_2}_{\mu}\cap E_U \cap \left\{x\in \R^d : \ \overline{\dim}_{\mathrm{loc}}(m_{E_U}^{s_2} ,x)\leq d\right\}.
\end{equation}
Recalling that for every probability measure $m$, $m(\{x=  \overline{\dim}_{\mathrm{loc}}(m  ,x)\leq d\})=1$, one necessarily has $m_{E_U}^{s_2}(S_U) \geq 1/2$. 

Let $x\in S_U $; consider $0\leq k_x\leq \lfloor \frac{\beta-\alpha}{\varepsilon_2}\rfloor+1$ such that   $x\in E^{[\theta_{k_x},\theta_{k_x+1}],\rho_{U},\varepsilon_2}_{\mu}.$ Applying Lemma \ref{doub}, there exists  $0<r_{x}<\min \big (\rho_x,\frac{1}{3}\min\left\{\vert V\vert:V\in\mathcal{W}_1 \right\}\big) $ and $t_x\in (5,6)$  such that: 
\begin{align}
\label{recap-ani}
 &10\,  r_x <\rho_{U};\\
\label{recap0ani}
& B(x,r_x)\subset U  \ \  \mbox{ and } \ \ \mu(\partial B(x,r_x/t_x))=0;\\
 \label{recap1ani}
&  r_x^{-\ep_2} \geq 5^{d} \frac{4 Q_{d,1}} {C_{\ep_3,d}} \frac{\eta(U)}{\mu(B^{[U]}) }\geq 5^{s_2} \frac{4 Q_{d,1}} {C_{\ep_2,d}} \frac{\eta(U)}{\mu(B^{[U]}) };\\
 \label{recap1bisani}
 &    r_{x} ^{\theta_{k_x}+2\varepsilon_2}\leq\mu(B(x,r_{x}))\leq  r_{x} ^{\theta_{k_x}-2\varepsilon_2};\\
\label{recap2ani}
& m_{E_U}^{s_2}(B(x,r_x/t_x))\geq  C_{\ep_2,d} \cdot m_{E_U}^{s_2}(B(x,r_x)).
\end{align}
Note that in \eqref{recap1ani} the second inequality follows automatically from the first one since $s_2\le \alpha\le d$ and the constant $C_{\ep,d}$ is an increasing function of $\varepsilon$.

The family $\left\{B(x,r_x) : x\in S_U\right\}$ forms a covering of $S_U$. We apply Lemma~\ref{besimodi} with $v=1$ (i.e., the standard Besicovich  covering Theorem) to this family to  extract  $Q_{d,1} $ subfamilies of balls, $\mathcal{G}^{U}_1 ,...,\mathcal{G}^{U}_{Q_{d,1} }$ such that: 
\begin{itemize}
\sk
\item $\forall 1\leq i \leq Q_{d,1} $, $\forall B \neq B' \in\mathcal{G}^{U}_{i}$, one has $B \cap B' =\emptyset,$ 
\sk
\item $S_U \subset \bigcup_{i=1}^{Q_{d,1} }\bigcup_{B \in\mathcal{G}^{U}_i}B.$ 
\end{itemize}
In particular, $m_{E_U}^{s_2} \left(\bigcup_{i=1}^{ Q_{d,1} }\bigcup_{B\in\mathcal{G}^{U}_i}B \right) \geq  m_{E_U}^{s_2}(S_U) \geq 1/2 $.

At least one of these families, say $\mathcal{G}^{U}_{i_0}$, verifies  that 
$$m_{E_U}^{s_2}\left (\bigcup_{B \in\mathcal{G}^{U }_{i_0}} B  \right )\geq \frac{m_{E_U}^{s_2}(S_U)}{Q_{d,1}}\geq \frac{1}{2Q_{d,1}}.$$
Writing $ \mathcal{G}^{U}_{i_0}=\left\{B_{i_0,k}^{U}\right\}_{k\in\mathbb{N}}$, one can find an integer  $N_U $  so large     that   
$$m_{E_U}^{s_2}\left(\bigcup_{1\leq k \leq N_U}B_{i_0,k}^{U}\right)\geq \frac{1}{4 Q_{d,1}}. $$

Remind that each $B_{i_0,k}^{U}$ is a ball $B(x,r_x)$ satisfying \eqref{recap0ani},  \eqref{recap1ani} and \eqref{recap2ani}.

Finally, setting  $\mathcal{G}^{U}=\left\{B(x, r_x/t_x): B(x,r_x) \in \mathcal{F}^{U }_{i_0} \right\}$, one has by construction
\begin{equation}
\label{eq22ani}
m_{E_U }^{s_2}\left(\bigcup_{B\in  \mathcal{G}^{U} }B \right) =  \sum_{B\in  \mathcal{G}^{U} } m_{E_U }^{s_2}(B) \geq \frac{C_{\ep_2,d}}{4 Q_{d,1}} .
\end{equation}

One then extends the pre-measure $\eta$ to the Borel $\sigma$-algebra generated by the balls of $\mathcal{G}^{U}$, by the formula 
\begin{equation}
\label{etaSani}
\mbox{ for every $B \in \mathcal{G}^{U}$,  } \ \ \ \eta (B )=\eta(U)\times  \frac{m_{E_U}^{s_2}(B )} {\sum_{B' \in \mathcal{G}^{U}}m_{E_U}^{s_2}(B')}.
\end{equation}
By construction, this formula is consistent  since  $\eta(U) = \sum_{B\in \mathcal{G}^{U}} \eta (B)$.

\mk

Observe that by \eqref{majcarl}, \eqref{eq22ani} and \eqref{eq21ani}, one has for every $B \in \mathcal{G}^{U}$,
\begin{align}
\label{eq23ani}
\eta (B ) & \leq \eta(U)  \kappa_d \frac{|B|^{s_2}}{\mathcal{H}^{s_2}_{\infty}(E_U)   } \frac{4 Q_{d,1}}{C_{\ep_2,d}} \leq  \frac{4 Q_{d,1} \kappa_d }{C_{\ep_2,d}}    \frac{ \eta(U)  } {\mu(B^{[U]})} |B|^{s_2}  \leq   |B|^{s_2-\ep_2}   ,
\end{align} 
where the second inequality of \eqref{recap1ani} was used.

\mk

This is achieved simultaneously for all $U\in\mathcal{W}_{1}$.

\mk

%%%%%%%%%%%%%%%%%%%%%%%%%%%%%%%%%
%%%%%%%%%%%%%%%%%%%%%%%%%%%%%%%%%
%%%%%%%%%%%%%%%%%%%%%%%%%%%%%%%%%
%%%%%%%%%%%%%%%%%%%%%%%%%%%%%%%%%
\subsubsection*{Covering with respect to $\mu$}  Now, in order to build the second generation of the Cantor set $K$, we select balls of $\mathcal{B}$ that lie  in the interior of these intermediate balls  $B\in \mathcal{G}^{U}$.

Let  $U\in\mathcal{W}_{1}$ and $B\in\mathcal{G}^{U}$ be one of these intermediary balls. Since $\mathcal{B}$ is $\mu$-a.c.,  the last part of Lemma \ref{covO} proves the existence of  a finite family  $ \mathcal{F}^{B}=\left\{U_{n_i}\right\}_{ 1\leq i \leq  {N_B}}$ such that  
\begin{itemize}
\sk
\item[($i_1$)] for every $ 1\leq i\leq  {N_B }$, one has $B_{n_i}\subset \widering{B} $ and 
\begin{equation}
\label{eq24ani}
\max\left\{ 2 Q_{d,4}   \frac{\eta(B)}{\mu(B)},\frac{5^d 4Q_{d,1} \kappa_d }{C_{\varepsilon_3 ,d}}\right\} \leq   {r_{n_i }^{-\varepsilon_2}} ,
\end{equation}
\sk
\item[($i_2$)]   for every $ 1\leq i\neq j\leq  {N_B}$, one has $B_{n_i}\cap B_{n_j}=\emptyset.$   
\end{itemize}

In addition, recalling that $\mu(\partial B)=0$ by \eqref{recap0ani}, one has
\begin{equation*}
\mu(B_{n_i}) >0 \ \ \mbox{ and } \ \ \mu\Big (\bigcup_{1\leq i\leq  {N_B}}B_{n_i} \Big)\geq \frac{3\mu(\widering{B})}{4}=\frac{3\mu(B)}{4}.
\end{equation*}

 {Recall the  definitions \eqref{emu}  and \eqref{defSLani} of the sets $E_{\mu}^{[a,b],\rho_{U},\varepsilon_2}$ and $S_{U}$.  By  equations  \eqref{recap-ani}-\eqref{recap2ani}, there exists $\alpha\leq a\leq \beta$ such that the center of $B$ belongs to $S_U\subset E_{\mu}^{[a,a+\varepsilon_2],\rho_U \varepsilon_2,}$ and $\vert B\vert \leq \rho_U$, hence one has}
$$ {\mu(B\cap \widetilde{ E}_{\mu}^{[a,a+\varepsilon_2],\rho_{U},\varepsilon_2})\geq \frac{3}{4}\mu(B).}$$
 {By $(i_2)$, and recalling   \eqref{emu}, one has

\begin{align*} 
 &{\mu\Big (\bigcup_{B_{n_i} : B_{n_i} \cap \widetilde{ E}_{\mu}^{[a,a+\varepsilon_2],\rho_U ,\varepsilon_2}\neq \emptyset }  B_{n_i}\Big)}
  \geq \mu\Big(\bigcup_{1\leq i\leq N_B}B_{n_i}\cap  \widetilde{ E}_{\mu}^{[a,a+\varepsilon_2],\rho_U ,\varepsilon_2}\Big)\\
&=\mu\left(\bigcup_{1\leq i\leq N_B}B_{n_i}\right)+\mu\left(\widetilde{ E}_{\mu}^{[a,a+\varepsilon_2],\rho_U ,\varepsilon_2}\right)- \mu\left( \widetilde{ E}_{\mu}^{[a,a+\varepsilon_2],\rho_U ,\varepsilon_2 }\bigcup \bigcup_{1\leq i\leq N_B} \!\! B_{n_i} \right)\nonumber\\
& {\geq \frac{3}{4}\mu(B)+\frac{3}{4}\mu(B)-\mu(B)=\frac{1}{2}\mu(B).}\nonumber
\end{align*}
 By a slight abuse of notations, up to an extraction, we still denote  by $\left\{B_{n_i}\right\}_{1\leq i\leq N_B}$ the balls $B_{n_i}$ such that $B_{n_i} \cap \widetilde{ E}_{\mu}^{[a,a+\varepsilon_2],\varepsilon_2,\rho_U }\neq \emptyset $. {The last  inequality implies  that the family of balls $\left\{B_{n_i}\right\}_{1\leq i\leq N_B}$ can be chosen so that it verifies conditions $(i_1)$ and $(i_2)$, as well as the two  following additional conditions:}
\begin{align*} 
  {(i_3 )} \ \ \ \ \ & \mu(B_{n_i}) >0 \ \ \mbox{ and } \ \ \mu\Big (\bigcup_{1\leq i\leq  {N_B}}B_{n_i} \Big)\geq \frac{\mu(\widering{B})}{2}=\frac{\mu(B)}{2},\\
  {(i_4)} \ \ \ \ & \mbox{for every $ 1\leq i\leq N_B$},  \ \ \ \ \  {B_{n_i} \cap \widetilde{ E}_{\mu}^{[a,a+\varepsilon_2],\rho_U ,\varepsilon_2}\neq \emptyset.}
 \end{align*} 
 The obtained family is still denoted by $\mathcal{F}^{B}$. 

Applying again Lemma \ref{distcov} to $\mathcal{F}^{B}$ with $v=4$, as in step one (see \eqref{recouetape1}, $(i^{\prime})$ and $(ii^{\prime})$), if one must consider a subfamily, one can assume that the family $\mathcal{F}^{B}$ satisfies $(i_1)$ and $(i_4)$ as well as the following condition $(i_2^{\prime})$ and $(i_3 ^{\prime})$:\mk
\begin{itemize}
\item[$(i_2^{\prime}):$]  for every $ 1\leq i\neq j\leq  {N_B}$, one has $ 4 B_{n_i}\cap 4 B_{n_j}=\emptyset.$ \mk
\item[$(i_3^{\prime}):$]  $\mu(B_{n_i}) >0 \ \ \mbox{ and } \ \ \mu\Big (\bigcup_{1\leq i\leq  {N_B}}B_{n_i} \Big)\geq \frac{\mu(\widering{B})}{2Q_{d,4}}=\frac{\mu(B)}{2Q_{d,4}},$
\end{itemize} 

\sk

Finally  one defines
$$\mathcal{W}_{2}=\bigcup_{U \in\mathcal{W}_1}\bigcup_{B\in\mathcal{F}^{1,U }}\mathcal{F}^{B} \ \ \ \mbox{ and } \ \ \ W_{2}=\bigcup_{L\in\mathcal{W}_{2}}L.$$

The pre-measure $\eta$ is then extended to the $\sigma$-algebra generated by the elements of $\mathcal{W}_{2}$ by setting for every $U \in \mathcal{W}_{1}$, every $B\in \mathcal{G}^{U}$ and   $V \in  \mathcal{F}^{B}$,
\begin{equation}
\label{etaB2ani}
   \eta (V)=\eta (B)\times \frac{\mu(B^{[V]})}{\sum_{V^{\prime} \in \mathcal{F}^{B} }\mu(B^{[V^{\prime}]})} .
 \end{equation}
 
By construction, one has $\sum _{V \in  \mathcal{F}^{B}} \eta(V) = \eta(B)$. Also, \eqref{eq24ani},\eqref{etaB2ani},  and $(i_3 ^{\prime})$ imply
 \begin{equation}
 \label{eqmicani}
 \frac{\eta (V)}{\mu(B^{[V]})} \leq 2Q_{d,4}\frac{\eta (B)}{\mu(B)}\leq \vert V\vert ^{-\varepsilon_2},
 \end{equation}
so that by \eqref{bonnemajo2ani} and  \eqref{eqmicani} one has  
 \begin{align}
 \label{eq25ani}
\eta (V) &\leq 2Q_{d,4}\frac{\eta (B)}{\mu(B)} \times \mu(B^{[V]} )\leq    {|B^{[V]}| ^{-\varepsilon_2}}   |V|^{s_2}    \leq  |V|^{s_2-\ep_2} .
\end{align}

 %Comparing  \eqref{eq23} and \eqref{eq25}, allows to better understand the competition between the 2 exponents $s$ and $\frac{\alpha}{\delta}$ for the measure $\eta$. This different behavior, according to the scale at which $\eta$ is observed, will occur at every generation of the Cantor set.
  
%%%%%%%%%%%%%%%%%%%%%%%%%%%%%%%%%
%%%%%%%%%%%%%%%%%%%%%%%%%%%%%%%%%
%%%%%%%%%%%%%%%%%%%%%%%%%%%%%%%%%
%%%%%%%%%%%%%%%%%%%%%%%%%%%%%%%%%
%%%%%%%%%%%%%%%%%%%%%%%%%%%%%%%%%
%%%%%%%%%%%%%%%%%%%%%%%%%%%%%%%%%
%%%%%%%%%%%%%%%%%%%%%%%%%%%%%%%%%
\subsubsection{Recurrence scheme and  end of the construction} 

Let $p\in\mathbb{N}^{*}$ be an integer, and set $\mathcal{W}_0=\R^d$. Suppose  that   sets of balls $\mathcal{W}_1$, ..., $\mathcal{W}_{p}$ as well as the measure $\eta $ are constructed such that :
\begin{enumerate}
\mk\item 
 for every $  1\leq q\leq p$, $\mathcal{W}_q \subset  \{U_{n} \}_{n\geq q}$, $\mathcal{W}_q\subset \mathcal{W}_{q-1}$, and $\eta$ is defined on the $\sigma$-algebra generated by the elements of $\bigcup_{q= 1}^p \mathcal{W}_{q}$.

 \mk
\item
For every $ 1\leq q\leq p-1$, for every $U \in\mathcal{W}_{q}$,  setting, as in step 2, $E_U =\limsup_{n\in\mathcal{N}_\mu(\mathcal{B},\mathcal{U},s_q)}B_n \cap U\cap E_{\mu}^{[\alpha,\beta],\varepsilon_q}$, then $\mathcal{H}^{s_q}_{\infty}(E_U )>0$.  If $m_{E_U}^{s_q}$ stands for  the measure associated with $E_U$ provided by Proposition \ref{carl}, there exists $\rho_U >0$ such that, for every $0\leq k\leq\lfloor \frac{\beta-\alpha}{\varepsilon_q}\rfloor+1$, setting $\theta_k=\theta_k^{(q)}=\alpha+k\varepsilon_q$, one has 
$$m_{E_U}^{s_q} (E_U \cap E_{\mu}^{[\theta_k ,\theta_{k+1}],\rho_U ,\varepsilon_q})\geq \frac{1}{2}m_{E_U}^{s_q} (E_U \cap E_{\mu}^{[\theta_k ,\theta_{k+1}],\varepsilon_q}).$$

In particular,
  $$m_{E_U}^{s_q} (E_U \cap \bigcup_{0\leq k\leq\lfloor \frac{\beta-\alpha}{\varepsilon_q}\rfloor+1} E_{\mu}^{[\theta_k ,\theta_{k+1}],\rho_U ,\varepsilon_q})\geq \frac{1}{2}.$$  
\mk
\item 
For every  $  1\leq q\leq p-1$, for every $ U\in\mathcal{W}_{q}$, there exists  a finite family $\mathcal{G}^{U}$  of balls  $B(x,r_x/t_x)$, where $x$, $r_x <\frac{1}{3}\min\left\{\vert \widetilde{U}\vert:\widetilde{U}\in\mathcal{W}_{q}\right\}$ and $t_x$  satisfy  \eqref{recap-ani}, \eqref{recap0ani},  \eqref{recap1ani},  \eqref{recap2ani} and \eqref{eq22ani}. Also, if $B\neq B' \in \mathcal{G}^{U}$, $3B\cap 3B' =\emptyset$. 

Also, for every $B\in \mathcal{G}^{U}$, \eqref{etaSani}  and \eqref{eq23ani} hold true. Moreover $\mathcal{W}_{q+1}\subset \bigcup_{U\in \mathcal{W}_q}  \mathcal{G}^{U}$.

\mk
\item
For every  $  1\leq q\leq p-1$, for every $ U\in\mathcal{W}_{q}$,  for every $B\in \mathcal{G}^{U}$ there exists a family $\mathcal{F}^{B} \subset \left\{U_{n} \right\}_{n\geq q}$  of  pairwise disjoint open  sets such that :
\begin{itemize}

\item for every $\widetilde{U}\neq \widehat{U} \in\mathcal{F}^{B}$, one has
\begin{equation}
\label{disjani}
4B^{[\widetilde{U}]}\cap 4 B^{[\widehat{U}]} =\emptyset;
\end{equation}

\item for every $\widetilde{U}\in \mathcal{F}^{B}$, $\widetilde{U}\subset \widering{B}$, \eqref{etaB2ani}  and \eqref{eq25ani} hold true,   as well as 
\begin{equation}
\label{majornani}
 2Q_{d,4}   \frac{\eta(B)}{\mu(B)} \leq   |B^{[\widetilde{U}]}|^{-\varepsilon_{q+1}}
\end{equation}
 and 
 \begin{equation}
\label{hypsupani}
 { B^{[\widetilde{U}]}\cap \widetilde{ E}_{\mu}^{[\theta_{k_B} ,\theta_{k_B+1}],\rho_{U},\varepsilon_{q+1}}\neq \emptyset;}
\end{equation}

\item  the following inequality also holds true:
\begin{equation}
\label{eq31ani}
\mu\left (\bigcup_{\widetilde{U}\in \mathcal{F}^{B}} B^{[\widetilde{U}]}\right) \geq \frac{\mu(B)}{2Q_{d,4}}.
\end{equation}
\end{itemize} 
\end{enumerate}

\mk

In item (3), the fact that $3B\cap 3B' =\emptyset$ just follows from the choice of $B(x,r_x/t_x)$ instead of simply $B(x,r_x)$.

The proof follows then exactly and rigorously the same lines as those of Step 2. We do not reproduce it here, the only differences are that   $\mathcal{W}_1$, $\mathcal{W}_2$ and $s_2$ are  replaced by $\mathcal{W}_p$, $\mathcal{W}_{p+1}$ and $s_{p+1}$.

\medskip

Finally, define  the Cantor set
$$K=\bigcap_{p\geq 1}W_{p}= \bigcap_{p\geq 1} \bigcup_{V\in \mathcal{W}_p}B^{[V]}.
$$
%%%%
Applying Caratheodory's extension Theorem to the pre-measure $\eta$ yields a probability outer-measure on $\R^d$ that we still denote by $\eta$, which is metric, so that Borel sets are $\eta$-measurable and its restriction to Borel sets belongs to $\mathcal M(\R^d)$. The so obtained measure  $\eta$ is fully supported on $K$. Also, for every $p\geq 2$, for any $U\in\mathcal{W}_p$, $B\in \mathcal{G}^{U}$, and $\widetilde{U}\in \mathcal{F}^{B}$, the inequalities  \eqref{etaSani}, \eqref{eq23ani},\eqref{etaB2ani} and \eqref{eq25ani} holds with $s_p$ and $\varepsilon_p$ instead of $s_2$ and~$\varepsilon_2$.

%%%%%%%%%%%%%%%%%%%%%%%%%%%%%%%%%
%%%%%%%%%%%%%%%%%%%%%%%%%%%%%%%%%
%%%%%%%%%%%%%%%%%%%%%%%%%%%%%%%%%
%%%%%%%%%%%%%%%%%%%%%%%%%%%%%%%%%
%%%%%%%%%%%%%%%%%%%%%%%%%%%%%%%%%
%%%%%%%%%%%%%%%%%%%%%%%%%%%%%%%%%
%%%%%%%%%%%%%%%%%%%%%%%%%%%%%%%%%
\subsubsection{Upper-bound for the mass of a ball}  

One first recall the following lemma (see, e.g.,~\cite{BV}).
\begin{lemme}
\label{geo}
Let $A=B(x ,r)$ and $B=B(x^{\prime} ,r^{\prime})$ be two  closed balls, $q\geq 3$ such that $A\cap B\neq \emptyset$ and $A\setminus (qB)\neq \emptyset$. Then $r^{\prime}\leq r$ and $qB\subset 5A.$
\end{lemme}

Define the gauge function $\zeta:\mathbb{R}^{+}\mapsto \mathbb{R}^{+}$ as follows: 
\begin{itemize}
\item
if   for some $p\geq 1$, $\frac{1}{3}\min\left\{\vert U \vert: \ U\in\mathcal{W}_{p+1}\right\}\leq r< \frac{1}{3}\min\left\{\vert U \vert: \ U\in\mathcal{W}_p\right\}$,  then $\zeta(r)=2Q_{d,4}10^d r^{s_p-5\varepsilon_p }$,\mk 
\item
if $r \geq \frac{1}{3}\min\left\{\vert U \vert: \ U\in\mathcal{W}_{1}\right\}$, $\zeta(r)=1$,\mk
\item
$\zeta(0)=0$.\mk
\end{itemize}
Since $\ep_p\to 0$, one checks that   $\lim_{r\to 0^+}\frac{ \log(\zeta(r))}{\log(r)}=\min\left\{s(\mu,\mathcal{B},\mathcal{U}),\dimm(\mu)\right\}$. 
 
 \medskip
 
Let  $A$ be a ball of radius $r$. If there exists $n\in\mathbb{N}$ such that $A$ does not intersect $K_n$ then $\eta(A)=\eta(A\cap K_n)=0.$ Suppose that for every $n\in\mathbb{N}$, $A$ intersects $K_n$. The goal is to prove that $\eta(A) \leq \zeta(|A|)$ when $|A|$ is small.

\medskip

 Some cases must be distinguished.

First if for every $n\in\mathbb{N}$, $A$ intersects only one contracted set $V_n$ of $K_n$, then by  \eqref{eq23ani}
$$\eta(A)\leq \eta(V_n)\leq  \vert V_n \vert^{s_n-\varepsilon_n }\underset{n\rightarrow+\infty}{\rightarrow}0.$$

In the other case, there exists $p \in\mathbb{N}$ such that $A$ intersects only one element of $\mathcal{W}_{p}$,  and at least two elements of $\mathcal{W}_{p +1}$. Denote by $U$ the unique  element of $\mathcal{W}_p$  intersecting~$A$.

\begin{enumerate}
\item {\bf Case 1:} If $ |A| \geq |U|  $, then by \eqref{eq25ani}
\begin{equation}
\label{res0ani}
\eta(A)\leq \eta(U)\leq |U| ^{s_p-\varepsilon_p}\leq \zeta(  |A|).
\end{equation}
\item {\bf Case 2:} If   $|A| < |U|  $ and $A$ intersects at least two  balls of $\mathcal{G}^{U}$: Observe that when $A$ intersects two balls  $B$ and $B'$ of $\mathcal{G}^{U}$,  since by item $(3)$ of the recurrence scheme $3B\cap 3B' =\emptyset$, one necessarily has (by Lemma \ref{geo}) $B\cup B' \subset 5A$. Hence,    $\bigcup_{B\in \mathcal{G}^{U}:  B\cap A\neq \emptyset}B \subset 5A$ and by \eqref{etaSani}  and \eqref{eq22ani},
\begin{align*}
\eta(A) & =  \eta(U )\times \frac{\sum_{B\in \mathcal{G}^{U}:  B\cap A\neq \emptyset}   m_{E_{U}}^{s_{p+1}}(B)}{{\sum_{B' \in \mathcal{G}^{U}}m_{E_U}^{s_{p+1}}(B')}} \leq \frac{4 Q_{d,1}} {C_{\ep_{p+1},d}}    \eta(U)    m_{E_{U}}^{s_{p+1}}(5A )   .
\end{align*}
Then, by \eqref{majcarl}, \eqref{eq21ani}, \eqref{eq24ani} and \eqref{eqmicani}
\begin{align}
\label{res1ani}
\eta(A) &\leq  \frac{4 Q_{d,1}} {C_{\ep_{p+1},d}}  \eta(U )   \kappa_d \frac{ (5|A|)^{s_{p+1}}}{ \mathcal{H}^{\mu,s_{p+1}}_{\infty}(E_U )}  
\leq    5^{s_{p+1}} \frac{4 Q_{d,1}\kappa_d} {C_{\ep_{p+1},d}}      \frac{ \eta(U )  } {  \mu(B^{[U]}) }   { |A|^{s_{p+1}}}\nonumber\\  
&\leq |A|^{s_{p+1}} |U|^{-2\ep_{p}} \leq |A|^{s_{p+1}-2\ep_{p+1}}\leq    \zeta(|A|),
\end{align}
where we used that$ |A|< |U|$, and the mappings  $x\mapsto \vert U\vert ^{-x}$ and $x\mapsto x^{-\ep_{p+1}}$ are decreasing.

\mk
\item {\bf Case 3:} If  $A$ intersects only one ball of $\mathcal{G}^{U}$:  calling  $B$ this particular ball and $r_B$ its radius (at this stage there should be no confusion with  the radii of the terms of the sequence $(B_n)_{n\ge 1}$), two cases must again be distinguished: 
\begin{enumerate}
\mk
\item {\bf Subcase 3.1:  $|B|\leq |A|$:}  by  \eqref{eq23ani},
 \begin{align}
 \label{res2ani}
\eta(A) & \leq \eta(B)  \leq |B|^{s_{p+1}-\ep_{p+1}}     \leq  |A|^{s_{p+1}-\ep_{p+1} } \leq\zeta(|A|).
\end{align}

\mk
\item  {\bf Subcase 3.2:  $|A|\leq |B|$:}
Denote by $k_B$ the integer such that its center belongs to $E_{\mu}^{[\theta_{k_B} ,\theta_{k_B+1}] ,\rho_{U},\varepsilon_{p+1}}$.

The ball $A$ must intersect  at least two elements $V\neq V'$ of $\mathcal{W}_{p+1}$ (by definition of $p$). Note that those sets must belong to $\mathcal{F}^{B}$ (because $A$ intersects only $B$).  Applying Lemma \ref{geo} to the ball $A$ with any of those ball $V\in\mathcal{F}_{p+1}$ , since $A\cap V \neq \emptyset$ and $A\setminus B^{[V]}\neq \emptyset$ (because $A$ intersects an other dilated ball, $B^{[V^{\prime}]}$ by hypothesis and two such balls verifies  \eqref{disjani}), one has 
\begin{equation}
\label{incani}
\bigcup_{V\cap A\neq \emptyset}B^{[V]} \subset 5A.
\end{equation}

Then, \eqref{etaB2ani} and \eqref{eq31ani} imply that
 \begin{align}
 \label{majoxcani}
\eta(A) & =  \eta (B)\cdot \frac{ \sum_{V\in\mathcal{W}_{p +1}: V\cap A\neq \emptyset} \mu(B^{[V]})}{\sum_{V' \in \mathcal{F}^{B} }\mu(B^{[V^{\prime}]})} \leq 2Q_{d,4}\frac{\eta (B)}{\mu(B)}  \mu(5A).
\end{align}

Recalling \eqref{incani}, the ball $5A$ contains some of the balls of $\mathcal{F}^{B}$: Hence, by \eqref{hypsupani},  $\widetilde{ E}_{\mu}^{[\theta_{k_B} ,\theta_{k_B+1}],\rho_{U},\varepsilon_{p+1}}\cap 5A \neq \emptyset$. Since $\vert A\vert \leq \vert B\vert$,   by \eqref{recap-ani}, since $r_B< \frac{1}{10}\rho_{U}$, for any  $x\in \widetilde{ E}_{\mu}^{[\theta_{k_B} ,\theta_{k_B+1}],\rho_{U},\varepsilon_{p+1}}\cap 5A$ one has

\begin{equation}
\label{majo3cani}
\mu(5A)\leq \mu(B(x,10r))\leq (10r)^{\theta_{k_B}-2\varepsilon_{p+1}}.
\end{equation}
Recalling  \eqref{recap1bisani} (applied to the ball $B$), one has 
\begin{equation}
\label{majozcani}
 \mu(B)\geq (r_B)^{\theta_{k_B}+2\varepsilon_{p+1}}.
 \end{equation} 

 Using \eqref{eq23ani} (applied to $B$) \eqref{majoxcani}, \eqref{majo3cani} and \eqref{majozcani}, one obtains
 \begin{align*}
 \eta(A) &\leq 2Q_{d,4}r_B^{s_{p+1}-\varepsilon_{p+1}} \frac{\big(10 r\big)^{\theta_{k_B}-2\varepsilon_{p+1}}}{r_B^{\theta_{k_B}+2\varepsilon_{p+1}}} \\
 &=2Q_{d,4} 10^{\theta_{k_B}-2\varepsilon_{p+1}} \frac{r_B^{s_{p+1}-\theta_{k_B} -\frac{\varepsilon_{p+1}}{\delta}-2\varepsilon_{p+1}}}{r^{s_{p+1}-\theta_{k_B}-\varepsilon_{p+1}-2\varepsilon_{p+1}}} r^{s_{p+1}-\theta_{k_B}-\varepsilon_{p+1}-4\varepsilon_{p+1}}\\
 &\leq 2Q_{d,4} 10^{\theta_{k_B}-2\varepsilon_{p+1}} r^{s_{p+1}-5\varepsilon_{p+1}}.
  \end{align*}
 Finally, recalling \eqref{defskani},  $s_{p+1}  -5\varepsilon_{p+1} \leq \alpha\leq \theta_k $, and since $r_B\geq r$ and $s_p \leq s_{p+1}$,  one gets 
 $$
 \eta(A)\leq 2 Q_{d,4} 10^{\theta_k-\varepsilon_{p+1}}\big(r \big)^{s_{p+1}-5\varepsilon_{p+1}}\leq 2Q_{d,4}  10^{d-\varepsilon_{p+1}}\vert A\vert ^{s_{p}-5\varepsilon_{p}}\leq  2Q_{d,4}  10^{d-\varepsilon_{p+1}}\vert A\vert ^{s_{p}-5\varepsilon_{p}},
$$ 
hence
\begin{equation}
 \label{res3ani}\eta(A) \leq  \zeta(|A|).
 \end{equation}
\end{enumerate}
 \end{enumerate} 
Since for any $p\in\mathbb{N}$ and any ball $A$ satisfying $\vert A \vert \leq \frac{1}{3}\min\left\{\vert U \vert:U\in\mathcal{W}_p\right\}$, if $A$ intersects at most one element of $\mathcal{W}_p$, the inequalities \eqref{res0ani}, \eqref{res1ani}, \eqref{res2ani}, \eqref{res3ani} proves that for any such ball, one has
 $\eta(A)\leq   \zeta(|A|)$.

 Hence recalling Definition \ref{hausgau}, by the mass distribution principle,  one deduces that $\mathcal{H}^{\zeta}(K)\geq 1 $, which concludes the proof of Theorem \ref{lowani}.

\subsection{Proof of Corollary~\ref{minoeffec}}
\label{sec-mino}

\subsubsection{Some basic properties about the $\mu$-essential Hausdorff content}
{In this sub-section,  basic properties of the $\mu$-essential content are established.}
%\begin{proposition}
%\label{openhcont}
%Let $\mu \in\mathcal{M}(\R^d)$ and $s\in\mathbb{R}.$ One defines for any Borel set $A$,
%$$\widetilde{\mathcal{H}}^{\mu,s}_{\infty}(A)=\inf \left\{\sum_{n\in\mathbb{N}}\vert B_n\vert ^s : \ (B_n)_{n\in \N}  \mbox{ are open balls and } A\subset \bigcup_{n\in \mathbb{N}}B_n , \right\}.$$
%Then $\mathcal{H}^{\mu,s}_{\infty}(A)=\widetilde{\mathcal{H}}^{\mu,s}_{\infty}(A).$
%\end{proposition}

\mk

First, we work in this article with the $\vert \vert \cdot \vert \vert_{\infty}$ norm for convenience. Any other norm could have been chosen, the corresponding quantities would have been equivalent.

\mk

In  \eqref{hcont}, only closed balls are considered. Choosing open balls does not change the value of \eqref{eqmucont} in Definition \ref{mucont}.

%%%%%%%%%%%%%%%%%%%%%%%%%%%%%%%%%
%%%%%%%%%%%%%%%%%%%%%%%%%%%%%%%%%
%%%%%%%%%%%%%%%%%%%%%%%%%%%%%%%%%
%%%%%%%%%%%%%%%%%%%%%%%%%%%%%%%%%
%%%%%%%%%%%%%%%%%%%%%%%%%%%%%%%%%
%\begin{proposition}
%Let $t>0$, $s\geq 0$ and $A\subset \R^d .$ %One defines 
%$$\mathcal{H}^{s}_{t}(A)=\inf \left\{\sum_{n\in\mathbb{N}}\vert B_n\vert^s : \ (B_n)_{n\in \N}  \mbox{ open balls centered on A },\vert B_n \vert\leq t, A\subset \bigcup_{n\in \mathbb{N}}B_n , \right\} $$
%and
%$$ \mathcal{H}_{t}^{\mu,s}(A)=\inf\left\{\mathcal{H}^{s}_{t}(E): \ E\subset  B , \ \mu(E)=\mu(B)\right\}.$$
%There exists a constant $C(d,t )$ depending on $t$ and the dimension $d$ such that 
%$$ \mathcal{H}^{s}_{\infty}(A)\leq\mathcal{H}^{s}_{t}(A)\leq C(d,t)\mathcal{H}^{s}_{\infty}(A).$$
%\end{proposition}
{The following propositions are directly derived from the properties of the standard Hausdorff measures. 

%Recall that the symmetric difference between two sets $A$ and $\widetilde A$ is $A\Delta \widetilde A:= (\widetilde{A} \setminus A) \bigcup (A\setminus \widetilde{A} )$.

%%%%%%%%%%%%%%%%%%%%%%%%%%%%%%%%%
\begin{proposition}
\label{propmuc}
 {Let $\mu \in\mathcal{M}(\R^d)$, $s\geq 0$ and $A\subset \R^d $ be a Borel set. The $s$-dimensional $\mathcal{H}^{\mu,s}_{\infty}(\cdot)$ outer measure satisfies the following properties:}
\begin{enumerate}
%\item The set function $\mathcal{H}^{\mu,s}_{\infty}(\cdot)$ is an outer measure.\mk
\item  { If $\vert A\vert\leq 1$, the mapping $s\geq 0\mapsto \mathcal{H}^{\mu,s}_{\infty}(A)$ is decreasing from $\mathcal{H}^{\mu,0}_{\infty}(A)=1$ to $\lim_{t\to +\infty}\mathcal{H}^{\mu,t}_{\infty}(A)=0$.\mk
%\item If $ s<0$, $\mathcal{H}^{\mu,s}_{\infty}(A)=0$,\mk
\item    $0\leq \mathcal{H}^{\mu,s}_{\infty}(A)\leq \min\left\{\vert A\vert ^s, \mathcal{H}^{s}_{\infty}(A)\right\}$.

\mk
\item For every subset $ B\subset A$ with $\mu(A)=\mu(B)$,   $\mathcal{H}^{\mu,s}_{\infty}(A)=\mathcal{H}^{\mu,s}_{\infty}(B).$}

\mk
\item For every  $\delta \geq 1$,  $\mathcal{H}^{\mu,\frac{s}{\delta}}_{\infty}(A)\geq (\mathcal{H}^{\mu,s}_{\infty}(A))^{\frac{1}{\delta}}.$

\mk
\item  For every  $  s>\overline{\dim}_H (\mu)$,   $\mathcal{H}^{\mu,s}_{\infty}(A)=0.$\mk

%\mk
%\item For every   $  s>\dimm(\mu)$,   $\mathcal{H}^{\mu,s}_{\infty}(A)\leq \mu(A)$.
%
%
%\mk
%\item There exists a $G_\delta$ set $\widetilde{A}$ with $\mu( A\Delta \widetilde{A)}=0$ and $\mathcal{H}^{\mu,s}_{\infty}(\widetilde{A})=\mathcal{H}^{s}_{\infty}(\widetilde{ A})$.
\end{enumerate}
\end{proposition}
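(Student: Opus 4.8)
The plan is to obtain each of the five properties from the corresponding property of the ordinary Hausdorff content $\mathcal{H}^s_\infty$ (and, for items (1) and (5), of the Hausdorff measure $\mathcal{H}^s$), the only genuinely new ingredient being a single measure-theoretic observation that controls how the infimum defining $\mathcal{H}^{\mu,s}_\infty$ in \eqref{eqmucont} interacts with set operations. Call it the \emph{intersection trick}: if $A$ is Borel, $E\subset A$ with $\mu(E)=\mu(A)$, and $F$ is any Borel set with $\mu(F)=\mu(A)$ (or $\mu(F)=1$), then $\mu(E\cap F)=\mu(A)$ as well; this follows at once from $\mu(E\cap F)=\mu(E)+\mu(F)-\mu(E\cup F)$ together with $\mu(E\cup F)\le\mu(A)$ (resp. $\le 1$) and $\mu(E\cap F)\le\mu(A)$. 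I will also use repeatedly that, since we work with $\|\cdot\|_\infty$, a set of diameter $r$ is contained in a closed ball of diameter $r$, so that $\mathcal{H}^s_\infty(E)\le|E|^s$ for every bounded $E$. Granting these, item (2) is immediate: taking $E=A$ as a competitor in \eqref{eqmucont} gives $\mathcal{H}^{\mu,s}_\infty(A)\le\mathcal{H}^s_\infty(A)$, the single-ball cover gives $\mathcal{H}^s_\infty(A)\le|A|^s$, and non-negativity is trivial.

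For item (3) I would prove the two inequalities separately. Every competitor $E\subset B$ with $\mu(E)=\mu(B)=\mu(A)$ is also a competitor for $A$, so $\mathcal{H}^{\mu,s}_\infty(A)\le\mathcal{H}^{\mu,s}_\infty(B)$ at once; conversely, given a competitor $E\subset A$ with $\mu(E)=\mu(A)$, the set $E\cap B$ is a competitor for $B$ by the intersection trick, and $\mathcal{H}^s_\infty(E\cap B)\le\mathcal{H}^s_\infty(E)$ by monotonicity of the content, whence $\mathcal{H}^{\mu,s}_\infty(B)\le\mathcal{H}^{\mu,s}_\infty(A)$ after taking the infimum over $E$. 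For item (1) (assuming $\mu(A)>0$, otherwise $\mathcal{H}^{\mu,s}_\infty(A)\equiv0$ via the competitor $E=\emptyset$): at $s=0$ every competitor $E$ is nonempty and bounded, so $\mathcal{H}^0_\infty(E)=1$ and thus $\mathcal{H}^{\mu,0}_\infty(A)=1$. For monotonicity, observe that when $|A|\le1$ one may, in computing $\mathcal{H}^s_\infty(E)$ for $E\subset A$, restrict to covers whose balls all have diameter $\le1$: any cover containing a ball of diameter $>1$ already has $\sum_n|B_n|^s\ge1\ge|A|^s$, while a single ball of diameter $|A|$ covers $E$ with sum $\le|A|^s$; and on covers with all diameters $\le1$ the function $s\mapsto\sum_n|B_n|^s$ is non-increasing. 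Hence $s\mapsto\mathcal{H}^s_\infty(E)$ is non-increasing for each $E$, and therefore so is $s\mapsto\mathcal{H}^{\mu,s}_\infty(A)=\inf_E\mathcal{H}^s_\infty(E)$. Finally, covering the bounded set $A$ by $N_\varepsilon<\infty$ balls of a fixed diameter $\varepsilon\in(0,1)$ yields $\mathcal{H}^{\mu,s}_\infty(A)\le\mathcal{H}^s_\infty(A)\le N_\varepsilon\varepsilon^s\to0$ as $s\to+\infty$.

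Item (4) rests on the elementary inequality $\sum_n a_n^{1/\delta}\ge\bigl(\sum_n a_n\bigr)^{1/\delta}$, valid for $a_n\ge0$ and $\delta\ge1$ (concavity of $t\mapsto t^{1/\delta}$). Applied with $a_n=|B_n|^s$ to an arbitrary cover of a set $E$ it gives $\mathcal{H}^{s/\delta}_\infty(E)\ge\bigl(\mathcal{H}^s_\infty(E)\bigr)^{1/\delta}$; since $t\mapsto t^{1/\delta}$ is increasing and continuous it commutes with the infimum over $E$ in \eqref{eqmucont}, which yields $\mathcal{H}^{\mu,s/\delta}_\infty(A)\ge\bigl(\mathcal{H}^{\mu,s}_\infty(A)\bigr)^{1/\delta}$. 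For item (5) I would use the identification $\overline\dim_H(\mu)=\inf\{\dim_H(F):F\in\mathcal{B}(\R^d),\ \mu(F)=1\}$: given $s>\overline\dim_H(\mu)$, pick a Borel set $F$ with $\mu(F)=1$ and $\dim_H(F)<s$, so that $\mathcal{H}^s(F)=0$ and a fortiori $\mathcal{H}^s_\infty(F)=0$; then $E:=A\cap F$ satisfies $E\subset A$, $\mu(E)=\mu(A)$ by the intersection trick, and $\mathcal{H}^s_\infty(E)\le\mathcal{H}^s_\infty(F)=0$, so $\mathcal{H}^{\mu,s}_\infty(A)=0$.

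None of this presents a real obstacle — the whole argument is bookkeeping. The only points needing a little care are keeping the competitor sets inside $A$ while preserving their $\mu$-mass (which is precisely what the intersection trick provides) and the mild $\ell^\infty$-geometry invoked in items (1) and (2); for item (5) one additionally relies on the standard identification of $\overline\dim_H(\mu)$ with $\inf\{\dim_H F:\mu(F)=1\}$, which I would take as known (see \cite{F}).
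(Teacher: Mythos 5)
Your proof is correct and takes essentially the same route as the paper, whose own proof simply records that items (1)--(3) follow directly from the definition, item (4) from concavity of $t\mapsto t^{1/\delta}$, and item (5) by intersecting a competitor with a full-measure Borel set $F$ of Hausdorff dimension $<s$ --- exactly the argument you give, with the routine details filled in. One small caveat: your ``intersection trick'' in the case $\mu(F)=\mu(A)$ is stated for an arbitrary Borel $F$, but the bound $\mu(E\cup F)\le\mu(A)$ used to justify it requires $F\subset A$; since the only place you invoke that case is item (3), where $F=B\subset A$, the argument is unaffected.
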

\begin{proof}
{Items $(1)$, $(2)$, $(3)$ directly follow  from the  definition. Item $(4)$ is   obtained by concavity of the mapping $x\mapsto |x|^{1/\delta}$.} 

\mk

{(5)} By Definition \ref{dim}, for any $s>\overline{\dim}_H (\mu)$, there exists a set $E$ with $\dim_{H}(E)<s$ and $\mu(E)=1.$ Using item (2), one has then   $0\leq\mathcal{H}^{\mu,s}_{\infty}(A)=\mathcal{H}^{\mu,s}_{\infty}(A\cap E)\leq \mathcal{H}^{s}_{\infty}(A\cap E)\leq \mathcal{H}^{s}(E)=0$.
\sk

\end{proof}
%%%%%%%%%%%%%%%%%%%%%%%%%%%%%%%%%
\subsubsection{Proof of Corollary \ref{minoeffec}}
 One starts with a lemma, the proof of which can be found in \cite{ED2}.

\begin{lemme} 
\label{gscainf}
Let   $\mu  \in\mathcal{M}(\R^d)$.
Let    $\mathcal{B} =(B_n :=B(x_n ,r_n))_{n\in\mathbb{N}}$ be a $\mu $-a.c sequence of balls of $ \R^d$.
 Then  for every $\varepsilon>0$, there exists  a  $\mu $-a.c sub-sequence $(B_{\phi(n)})_{n\in\mathbb{N}}$ of  $\mathcal{B} $ such that  for every $n\in\mathbb{N}$, $\mu (B_{\phi(n)})\leq (r_{\phi(n)})^{\underline{\dim}_H (\mu )-\varepsilon}.$
\end{lemme}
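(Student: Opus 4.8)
The plan is to keep exactly the ``good'' balls and discard the rest. Set $\alpha=\underline{\dim}_H(\mu)$ (we may assume $\alpha\ge\varepsilon$, since otherwise $r_n^{\alpha-\varepsilon}>1\ge\mu(B_n)$ for all large $n$ and the statement is trivial with $\phi$ essentially the identity). Let
$$G=\{n\in\N:\ \mu(B_n)\le r_n^{\alpha-\varepsilon}\}$$
and let $\phi$ be the increasing enumeration of $G$; the required estimate $\mu(B_{\phi(n)})\le r_{\phi(n)}^{\,\alpha-\varepsilon}$ then holds by definition, so everything reduces to proving that $\mathcal B_G:=(B_n)_{n\in G}$ is $\mu$-a.c.\ (which will in particular force $G$ to be infinite).

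First I would record the measure-theoretic ingredient. Since $\alpha=\mathrm{ess\,inf}_\mu\,\underline\dim_{\mathrm{loc}}(\mu,\cdot)$, for $\mu$-a.e.\ $x$ one has $\underline\dim_{\mathrm{loc}}(\mu,x)\ge\alpha$, hence the increasing family
$$A_j=\big\{x\in\R^d:\ \mu(B(x,r))\le r^{\alpha-\varepsilon/2}\text{ for all }0<r\le 1/j\big\}$$
satisfies $\mu\big(\bigcup_{j\ge1}A_j\big)=1$, i.e.\ $\mu(A_j)\to 1$. The elementary observation I would then use is that a ``bad'' ball of small radius cannot meet $A_j$: if $n\notin G$, $x\in B_n\cap A_j$ and $2r_n\le 1/j$, then $B_n\subset B(x,2r_n)$ (closed sup-norm balls) gives $r_n^{\alpha-\varepsilon}<\mu(B_n)\le(2r_n)^{\alpha-\varepsilon/2}$, forcing $r_n>2^{\,1-2\alpha/\varepsilon}$. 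Since $r_n\to0$, there is $g_j\in\N$ such that $n\ge g_j$ and $n\notin G$ imply $B_n\subset A_j^{c}$.

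With this in hand, I would verify conditions (i)--(iii) of Definition~\ref{ac} for $\mathcal B_G$ with constant $C/2$, where $C$ is the constant of $\mathcal B$. Given an open set $\Omega$ (we may assume $\mu(\Omega)>0$, taking the empty family otherwise) and $g\in\N$: choose $j$ with $\mu(A_j^c)<\tfrac C2\mu(\Omega)$, put $g'=\max(g,g_j)$, and apply the $\mu$-a.c.\ property of $\mathcal B$ to $\Omega$ and $g'$ to get pairwise disjoint balls $B_{n_1},\dots,B_{n_N}\subset\Omega$ with $g'\le n_1\le\cdots\le n_N$ and $\mu\big(\bigcup_i B_{n_i}\big)\ge C\mu(\Omega)$. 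Splitting the $n_i$ according to membership in $G$, the bad ones are disjoint and lie in $A_j^c$ by the previous step, so they carry mass $<\tfrac C2\mu(\Omega)$; hence the good ones carry mass $\ge\tfrac C2\mu(\Omega)$, and they form a finite disjoint family inside $\Omega$ with indices $\ge g$ belonging to $G$ --- exactly what Definition~\ref{ac} demands.

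The only real obstacle is the multiplicative control of the bad balls relative to $\mu(\Omega)$: no single truncation $A_j$ bounds $\sum_{\text{bad}}\mu(B_{n_i})$ by a small fraction of $\mu(\Omega)$ uniformly in $\Omega$. This is resolved by the order of quantifiers --- the level $j$ (hence $\mu(A_j^c)$) is chosen after $\Omega$ is given, and the starting index $g'$ after $j$ --- and the slack between the exponent $\alpha-\varepsilon$ defining $G$ and the exponent $\alpha-\varepsilon/2$ defining $A_j$ is exactly what lets the ``small bad balls avoid $A_j$'' step work. Everything else is bookkeeping.
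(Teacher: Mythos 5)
Your argument is correct, and it is worth noting that the present paper does not actually contain a proof of Lemma~\ref{gscainf}: it is only quoted with a reference to the companion paper \cite{ED2}, so there is no in-paper proof to compare against. Your route is the natural one and is complete as written: keep the set $G$ of indices with $\mu(B_n)\le r_n^{\alpha-\varepsilon}$, use that $\underline\dim_{\mathrm{loc}}(\mu,x)\ge\alpha$ for $\mu$-a.e.\ $x$ to build the increasing full-measure sets $A_j$ with the slightly larger exponent $\alpha-\varepsilon/2$, observe that a bad ball of radius below a fixed threshold cannot meet $A_j$ (the factor $2^{\alpha-\varepsilon/2}$ coming from $B_n\subset B(x,2r_n)$ being absorbed by the $\varepsilon/2$ of slack), and then, for a given $\Omega$ and $g$, choose $j$ with $\mu(A_j^c)<\tfrac C2\mu(\Omega)$ \emph{after} $\Omega$ and run the $\mu$-a.c.\ property of the full sequence at a starting index $\ge\max(g,g_j)$; the disjoint bad balls then sit in $A_j^c$ and carry less than $\tfrac C2\mu(\Omega)$, so the good ones realize the definition with the uniform constant $C/2$, which also forces $G$ to be infinite. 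The quantifier order you highlight (constant first, then $\Omega$, then $j$, then the starting index) is exactly what makes the definition of $\mu$-a.c.\ satisfied, and the trivial cases ($\alpha\le\varepsilon$, $\mu(\Omega)=0$) are handled correctly. The only point you pass over silently is the ($\mu$-)measurability of the sets $A_j$, which follows in the standard way from upper semi-continuity of $x\mapsto\mu(B(x,r))$ for closed balls and right-continuity in $r$, allowing a reduction to rational radii; this is a routine remark, not a gap.
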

 
%%%%%%%%%%%%%%%%%%%%%%%%%%%%%%%%%%%%%%%%%%%%%%%%%%%%%%%%%% 
 %%%%%%%%%%%%%%%%%%%%%%%%%%%%%%%%%%%%%%%%%%%%%%%%%%%%%%%%%%%%
 %%%%%%%%%%%%%%%%%%%%%%%%%%%%%%%%%%%%%%%%%%%%%%%%%%%%%%%%%%%%
 %%%%%%%%%%%%%%%%%%%%%%%%%%%%%%%%%%%%%%%%%%%%%%%%%%%%%%%%%%%%%

\begin{proof}[Proof of  Corollary~\ref{minoeffec}](1) Observe that item  (2) of Proposition \ref{propmuc} implies that $ t(\mu,\delta,\varepsilon ,\mathcal{B}) \geq \dimm(\mu)-\ep$, and  $ t(\mu,\delta ,\mathcal{B}) \geq \dimm(\mu)$.

% and  $\frac{\dimm(\mu)^2}{t_{\delta}\delta}\leq \frac{\dimm(\mu)}{\delta}.$

Now choose  $\varepsilon>0$ so small  that $\frac{(1-\varepsilon)\dimm(\mu)}{\delta\cdot  t(\mu, \delta,\varepsilon ,\mathcal{B} )}\leq 1$. Recalling Lemma \ref{gscainf}, up to an extraction, one can assume that for any $n\in\mathbb{N}$, $$\mu(B_n)\leq \vert B_n \vert ^{(1-\varepsilon^2).\dimm(\mu)}.$$ 

Due to   \eqref{tdeltainf}, there exists $N_{\varepsilon}\in\mathbb{N}$ such that for any $n\geq N_{\varepsilon}$, $$\mathcal{H}^{\mu ,\dimm(\mu)-\varepsilon}_{\infty}(\widering B_n ^{\delta})\geq \vert B_n ^{\delta} \vert ^{(1+\varepsilon). t(\mu,\delta,\varepsilon ,\mathcal{B}) }.
$$ 
Then, Proposition \ref{propmuc} (4) implies that for every $n\geq N_{\varepsilon}$,  
\begin{align*}
\mathcal{H}_{\infty}^{\mu, \frac{(1-\varepsilon)\dimm(\mu)\times(\dimm(\mu)-\varepsilon)}{\delta \cdot t(\mu,\delta,\varepsilon ,\mathcal{B})  }}(\widering B_n ^{\delta})&\geq(\mathcal{H}^{\mu,\dimm(\mu)-\varepsilon}_{\infty}(B_n ^{\delta}))^{\frac{(1-\varepsilon)\dimm(\mu)}{\delta\cdot  t(\mu,\delta,\varepsilon ,\mathcal{B}) }}\\
&\geq \vert B_n ^{\delta} \vert ^{\frac{ (1+\varepsilon)  t(\mu,\delta,\varepsilon ,\mathcal{B}) }{\delta \cdot t(\mu,\delta,\varepsilon ,\mathcal{B})}(1-\varepsilon).\dimm(\mu)}\\
&\geq \vert B_n  \vert ^{ (1+\varepsilon)(1-\ep)  \dimm(\mu)} 
\geq \mu(B_n) .
\end{align*}
Thus, setting $s_{\delta,\varepsilon} =  \frac{(1-\varepsilon)\dimm(\mu)\times(\dimm(\mu)-\varepsilon)}{\delta \cdot t(\mu,\delta,\varepsilon ,\mathcal{B}) }$, Corollary~\ref{zzani} yields
$$\dim_{H}(\limsup_{n\rightarrow+\infty}\widering B_n ^{\delta})\geq s_{\delta,\varepsilon}.$$
Since the result holds for any $\varepsilon>0$, one gets the desired conclusion. 
\end{proof}

\section{Estimation of essential content for self-similar measures}
\label{sec-parti}
In this section one computes  the Hausdorff content of balls in the case of the Lebesgue measure, and estimates it for any self-similar measure.

\subsection{Computation of essential content for the Lebesgue measure}

When the measure $\mu$ is the Lebesgue measure, the computations are quite easy.
 
%%%%%%%%%%%%%%%%%%%%%%%%%%%%%%%%%

\begin{proposition}
Let $B=B(x,r) $ be a ball in $\R^d$,  and $\mathcal{L}^d$ be the $d-$dimensional Lebesgue measure. Then for any $0\leq s\leq d$, 
$\mathcal{H}^{\mathcal{L}^d,s }_{\infty}(B)=\mathcal{H}^{\mathcal{L}^d ,s}_{\infty}(\widering{B})=r^s .$
\end{proposition}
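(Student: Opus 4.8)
The plan is to prove the two inequalities $\mathcal{H}^{\mathcal{L}^d,s}_{\infty}(B)\le r^s$ and $\mathcal{H}^{\mathcal{L}^d,s}_{\infty}(\widering{B})\ge r^s$ separately; since $\widering B\subset B$ and $\mathcal{L}^d(\widering B)=\mathcal{L}^d(B)$, item (3) of Proposition \ref{propmuc} already gives $\mathcal{H}^{\mathcal{L}^d,s}_{\infty}(\widering{B})=\mathcal{H}^{\mathcal{L}^d,s}_{\infty}(B)$, so it suffices to sandwich the common value between $r^s$ and $r^s$. The upper bound $\mathcal{H}^{\mathcal{L}^d,s}_{\infty}(B)\le |B|^s = r^s$ (recall $|B(x,r)|=2r$ in general, but here the paper's convention with the sup-norm makes $|B(x,r)|$ comparable to $r$; in fact one just covers $B$ by itself as a single ball, using the convention that the diameter equals $r$ for this normalization — more carefully, $\mathcal{H}^{\mathcal{L}^d,s}_\infty(B)\le\mathcal H^s_\infty(B)\le |B|^s$) is immediate from item (2) of Proposition \ref{propmuc}. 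So the whole content of the statement is the lower bound.

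For the lower bound, I would use Frostman's lemma in the form of Proposition \ref{carl} (Carleson's version), or more simply the mass distribution principle directly with normalized Lebesgue measure. The key point: let $E\subset B$ be any Borel set with $\mathcal{L}^d(E)=\mathcal{L}^d(B)$, and let $\{B_j\}$ be any countable cover of $E$ by balls. Normalizing, put $m = \mathcal{L}^d(\,\cdot\,\cap B)/\mathcal{L}^d(B)$, a probability measure supported on $\overline B$. Since $\mathcal{L}^d(E)=\mathcal{L}^d(B)$ we have $m(E)=1$, hence $\sum_j m(B_j)\ge m(E)=1$. Now $m(B_j)=\mathcal{L}^d(B_j\cap B)/\mathcal{L}^d(B)\le \mathcal{L}^d(B_j)/\mathcal{L}^d(B) = c_d |B_j|^d/(c_d r^d)$ where $c_d$ is the volume of the unit ball in the chosen norm; thus $m(B_j)\le (|B_j|/r)^d$. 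If $|B_j|\le r$ — which we may assume for the balls that matter, since a ball of diameter $>r$ can be replaced, for the purpose of bounding $\sum |B_j|^s$ from below, by the observation that such a ball already contributes at least $r^s$ — then $(|B_j|/r)^d \le (|B_j|/r)^s$ because $s\le d$ and $|B_j|/r\le 1$. Summing, $1\le \sum_j m(B_j)\le \sum_j (|B_j|/r)^s = r^{-s}\sum_j |B_j|^s$, i.e. $\sum_j |B_j|^s \ge r^s$. Taking the infimum over covers gives $\mathcal{H}^s_\infty(E)\ge r^s$, and then the infimum over admissible $E$ gives $\mathcal{H}^{\mathcal{L}^d,s}_\infty(B)\ge r^s$.

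The main obstacle — really the only subtlety — is the bookkeeping around balls in the cover whose diameter exceeds $r$: the inequality $(|B_j|/r)^d\le(|B_j|/r)^s$ goes the wrong way when $|B_j|>r$. The clean fix is to split the cover into those $B_j$ with $|B_j|\le r$ and the rest; if some $B_j$ has $|B_j|>r$ then already $\sum |B_j|^s \ge |B_j|^s > r^s$ and we are done, so we may assume all $|B_j|\le r$ and run the computation above. One must also be slightly careful about the paper's normalization of $|B|$ versus the Euclidean/sup-norm volume, but since all the constants $c_d$ cancel in the ratio $m(B_j)\le(|B_j|/r)^d$ (as both numerator and denominator are $c_d\times(\text{radius})^d$ up to the fixed relation between radius and diameter in the sup-norm), no genuine difficulty arises; the equality $r^s$ on the nose is exactly what this cancellation delivers.
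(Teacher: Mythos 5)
Your proof is correct, and the overall skeleton (upper bound from item (2) of Proposition \ref{propmuc}, reduction of $\widering B$ to $B$ via item (3), lower bound by comparing a cover of a full-measure subset $E\subset B$ with normalized Lebesgue measure) matches the paper's; the genuine difference is in how the exponents $s<d$ are reached. The paper first treats only $s=d$, where the inequality $\sum_n |L_n|^d\geq \sum_n\mathcal L^d(L_n)\geq \mathcal L^d(E)=\mathcal L^d(B)$ needs no restriction on the sizes of the covering balls, and then obtains every $s=d/\delta\le d$ in one line from item (4) of Proposition \ref{propmuc}, $\mathcal{H}^{\mu,d/\delta}_{\infty}(B)\geq \big(\mathcal{H}^{\mu,d}_{\infty}(B)\big)^{1/\delta}$, sandwiched against the trivial upper bound. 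You instead run the argument directly at exponent $s$, which forces the monotonicity step $\big(|B_j|/r\big)^d\le\big(|B_j|/r\big)^s$ and hence the case split on covering balls with $|B_j|>r$; your fix for that case is fine, but the paper's route via the concavity inequality avoids the bookkeeping entirely, which is precisely what item (4) is there for. One small caveat on constants: carried out carefully under the paper's conventions (sup-norm balls, $|B|$ the diameter, so $\mathcal L^d(B_j)=|B_j|^d$ and $\mathcal L^d(B)=(2r)^d$), your computation yields $\sum_j|B_j|^s\geq |B|^s=(2r)^s$, not $r^s$ "on the nose" as you assert; this factor-of-$2^s$ mismatch between $r^s$ and $|B|^s$ is already present between the paper's statement and its own proof (which concludes with $|B|^d$), so it is a normalization issue rather than a gap in your argument, but your claim that the constants cancel exactly to give $r^s$ is not accurate as written.
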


%%%%%%%%%%%%%%%%%%%%%%%%%%%%%%%%%

%%%%%%%%%%%%%%%%%%%%%%%%%%%%%%%%%
\begin{proof}
One starts first by computing $\mathcal{H}^{\mathcal{L}^d ,d}_{\infty}(B)$.

Let  $\ep>0$, and let $E\subset B$ be a Borel set with $\mathcal{L}^d (E)=\mathcal{L}^d (B)$. Notice first that since $B$ covers $E$, recalling that $\mathbb{R}^d$ is endowed with  $\vert \vert \cdot  \vert \vert_{\infty}$ one has $$\mathcal{H}^{\mathcal{L}^d,d}_{\infty}(E)\leq \mathcal{H}^{d}_{\infty}(B)\leq \vert B \vert^d.$$ 

Consider   a sequence of balls  $(L_n)_{n\in\mathbb{N}}$   such that $$\mathcal{H}^{d}_{\infty}(E)\leq\sum_{n\geq 0}\vert L_n \vert ^d \leq (1+\varepsilon)\mathcal{H}^{d}_{\infty}(E).$$
This implies 
\begin{align*}
(1+\varepsilon)\vert B\vert ^d &\geq (1+\varepsilon)\mathcal{H}^{d}_{\infty}(B )\geq (1+\varepsilon)\mathcal{H}^{d}_{\infty}(E)\geq\sum_{n\geq 0}\vert L_n \vert^d \\
& \geq \sum_{n\geq 0}\mathcal{L}^d (L_n)\geq \mathcal{L}^d (E)=\mathcal{L}^d (B)=\vert B\vert ^d.
\end{align*}
Taking the infimum on the Borel sets $E \subset B$ such that $\mathcal{L}^d (E)=\mathcal{L}^d (A)$ gives
$$  \vert B\vert ^d \leq (1+\varepsilon)\mathcal{H}^{\mathcal{L}^d,d }_{\infty}(B) .$$ 
In particular, 
$$ \frac{1}{1+\varepsilon}\vert B\vert ^d \leq \mathcal{H}^{\mathcal{L}^d ,d }_{\infty}(B)\leq \vert B\vert ^d .$$ Letting $\varepsilon \to 0$ shows that $\mathcal{H}^{\mathcal{L}^d,d }_{\infty}(B)= \vert B\vert ^d$.
This implies, with item $(4)$ of Proposition \ref{propmuc}, that for any $\delta \geq 1$, 
$$\vert B\vert ^{\frac{d}{\delta}}\geq \mathcal{H}^{ \mathcal{L}^d,\frac{d}{\delta}}_{\infty}(B)\geq (\mathcal{H}^{\mathcal{L}^d,d}_{\infty}(B) )^{\frac{1}{\delta}}=\vert B\vert ^{\frac{d}{\delta}},$$
hence the result.
\end{proof}

%%%%%%%%%%%%%%%%%%%%%%%%%%%%%%%%%

\subsection{Proof of Theorem \ref{contss}}

%%%%%%%%%%%%%%%%%%%%%%%%%%%%%%%%%

\begin{proposition} 
\label{autosim2}
Let $\mu$ be a self-similar measure.   For any $0<\varepsilon\leq \dim(\mu)$, there exists a constant $\kappa( d,\mu,\varepsilon)\in(0,1)$ such that for any ball $B=B(x,r)$ with $x\in K$ (the attractor of the underlying IFS) and $r\leq 1$,   one has $$\kappa( d,\mu,\varepsilon)\vert B\vert ^{\dim(\mu)-\varepsilon}\leq\mathcal{H}^{\mu, \dim(\mu)-\varepsilon }_{\infty}(\widering{B})\leq  \mathcal{H}^{ \mu, \dim(\mu)-\varepsilon}_{\infty}(B)\leq\vert B\vert ^{\dim(\mu)-\varepsilon}.$$
In addition, for any $s>\dim( \mu)$ one has  $\mathcal{H}^{\mu,s}_{\infty}(B)=0.$
\end{proposition}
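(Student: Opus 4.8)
\medskip

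The plan is to treat the four assertions separately; only the lower bound for $\mathcal{H}^{\mu,\dim(\mu)-\varepsilon}_{\infty}(\widering B)$ is delicate. Write $D=\dim(\mu)$ and $s=D-\varepsilon\in[0,D)$; the case $s=0$ is trivial (the essential $0$-content of any non-empty set equals $1$), so assume $0<s<D$. The upper bound $\mathcal{H}^{\mu,s}_{\infty}(B)\le|B|^s$ is item~(2) of Proposition~\ref{propmuc}. For $\mathcal{H}^{\mu,s}_{\infty}(\widering B)\le\mathcal{H}^{\mu,s}_{\infty}(B)$: if $E\subset B$ and $\mu(E)=\mu(B)$, then $\mu(\widering B\setminus E)=0$, so $E\cap\widering B$ is admissible in the infimum defining $\mathcal{H}^{\mu,s}_{\infty}(\widering B)$ and $\mathcal{H}^{\mu,s}_{\infty}(\widering B)\le\mathcal{H}^{s}_{\infty}(E\cap\widering B)\le\mathcal{H}^{s}_{\infty}(E)$; take the infimum over $E$. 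Since a self-similar measure is exact dimensional (Feng--Hu), $\overline{\dim}_H(\mu)=D$, so the assertion for $s>D$ is item~(5) of Proposition~\ref{propmuc}.

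For the lower bound I first establish a uniform estimate on the attractor itself: $\mathcal{H}^{\mu,s}_{\infty}(K)\ge c_0$ for some $c_0=c_0(d,\mu,\varepsilon)>0$. By exact dimensionality $\frac{\log\mu(B(x,r))}{\log r}\to D$ for $\mu$-a.e.\ $x$, so by Egorov's theorem there are a Borel set $G\subset K$ with $\mu(G)\ge\tfrac12$ and a radius $\rho_0\in(0,1]$ such that $\mu(B(x,r))\le r^{D-\varepsilon/2}$ for all $x\in G$ and $0<r\le\rho_0$. Let $E\subset K$ with $\mu(E)=1$ and let $(L_j)_j$ be a countable cover of $E$ by balls. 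If $|L_j|>\rho_0$ for some $j$, then $\sum_j|L_j|^s\ge\rho_0^{\,s}$; otherwise, for each $j$ with $L_j\cap G\ne\emptyset$ pick $z_j\in L_j\cap G$, so that $L_j\subset B(z_j,|L_j|)$ and $\mu(L_j)\le|L_j|^{D-\varepsilon/2}\le|L_j|^{s}$ (as $|L_j|\le\rho_0\le1$), whence $\tfrac12\le\mu(E\cap G)\le\sum_j\mu(L_j\cap E\cap G)\le\sum_j|L_j|^s$. Either way $\sum_j|L_j|^s\ge c_0:=\min\{\rho_0^{\,s},\tfrac12\}$, so $\mathcal{H}^{s}_{\infty}(E)\ge c_0$, and taking the infimum over such $E$ gives $\mathcal{H}^{\mu,s}_{\infty}(K)\ge c_0$.

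Next I transfer this to an arbitrary ball $B=B(x_0,r_0)$ with $x_0\in K$, $r_0\le1$, by self-similarity. Writing $x_0=\pi((\omega_n)_{n\ge1})$ with $(\omega_n)\in\Lambda^{\mathbb{N}}$, let $\underline i=(\omega_1,\dots,\omega_n)$ with $n$ minimal such that $c_{\underline i}|K|\le r_0/2$; then $f_{\underline i}(K)$ has diameter $\le r_0/2$ and contains $x_0$, hence $f_{\underline i}(K)\subset\widering B$, and minimality gives $c_{\underline i}\ge\gamma_1 r_0$ with $\gamma_1=\min_ic_i/(2|K|)$ (the degenerate case $|K|\le r_0/2$ is handled directly, taking $\underline i$ empty). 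Now fix $E\subset\widering B$ with $\mu(E)=\mu(\widering B)$. From $\mu(\widering B\setminus E)=0$, the inequality $\mu\ge p_{\underline i}\,\mu\circ f_{\underline i}^{-1}$ (with $p_{\underline i}=\prod_k p_{\omega_k}>0$, from iterating \eqref{def-ssmu2}), and $(\mu\circ f_{\underline i}^{-1})(f_{\underline i}(K))=1$, one checks that $E':=f_{\underline i}^{-1}(E\cap f_{\underline i}(K))\subset K$ satisfies $\mu(E')=1$ and $f_{\underline i}(E')=E\cap f_{\underline i}(K)$ ($f_{\underline i}$ being a bijection of $\R^d$). Since $f_{\underline i}$ is a similarity of ratio $c_{\underline i}$, Hausdorff content scales by $c_{\underline i}^{s}$ up to a constant $C_d$ depending only on $d$ (this constant accounts for comparing $\|\cdot\|_\infty$-balls with Euclidean balls, as the linear parts of the $f_i$ need not be $\|\cdot\|_\infty$-isometries), so $\mathcal{H}^{s}_{\infty}(E)\ge\mathcal{H}^{s}_{\infty}(f_{\underline i}(E'))\ge C_d^{-1}c_{\underline i}^{s}\mathcal{H}^{s}_{\infty}(E')\ge C_d^{-1}(\gamma_1 r_0)^{s}\mathcal{H}^{\mu,s}_{\infty}(K)\ge C_d^{-1}(\gamma_1 r_0)^s c_0$. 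Taking the infimum over $E$ and using $|B|=2r_0$ gives $\mathcal{H}^{\mu,s}_{\infty}(\widering B)\ge\kappa(d,\mu,\varepsilon)|B|^{s}$ with $\kappa(d,\mu,\varepsilon):=\min\{\tfrac12,\,C_d^{-1}(\gamma_1/2)^{s}c_0\}\in(0,1)$.

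The main obstacle is the attractor estimate $\mathcal{H}^{\mu,s}_{\infty}(K)\ge c_0$: since no separation condition is assumed, $\mu$ need not be doubling and is genuinely not uniformly $s$-upper-regular on $K$ — small cylinders $f_{\underline j}(K)$ can carry mass much larger than $(\operatorname{diam} f_{\underline j}(K))^{s}$ — so one cannot run a Frostman/mass-distribution argument with $\mu$ itself; the Egorov reduction to a set $G$ of definite mass, on which a uniform upper estimate does hold, is exactly what repairs this. A secondary technical point is the careful accounting of the dimensional constant in the scaling of Hausdorff content under the Euclidean (but not sup-norm) similarities $f_i$.
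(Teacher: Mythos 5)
Your proof is correct, and its two pillars are exactly the paper's: a subset of $K$ of measure at least $\tfrac12$ on which $\mu(B(x,r))\leq r^{\dim(\mu)-\varepsilon/2}$ uniformly for small $r$ (your Egorov set $G$ is the paper's $\widetilde{E}_{\mu}^{[\alpha,\alpha],\rho_\varepsilon,\varepsilon}$), and the fact that $\mu\circ f_{\underline{i}}^{-1}\ll\mu$ (your $p_{\underline{i}}\,\mu\circ f_{\underline{i}}^{-1}\leq\mu$), valid with no separation condition, together with the choice of a cylinder $f_{\underline{i}}(K)\subset\widering{B}$ with $c_{\underline{i}}$ comparable to $r$. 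The execution differs in a genuine way, though. The paper pushes the regular set into the cylinder, derives the rescaled regularity of $\mu_{\underline{i}}=\mu\circ f_{\underline{i}}^{-1}$ on $E_{\underline{i}}=f_{\underline{i}}(E)$ (its equation for $E_{\underline{i}}$), and runs the covering argument in place inside $B$; this forces it to recenter arbitrary cover balls onto $A\cap E_{\underline{i}}$ via the Besicovitch-type Lemma \ref{dimconst} (cost $Q_{d,1/3}$) and to subdivide cover balls of radius larger than $\rho_\varepsilon c_{\underline{i}}$ (cost $(2/\rho_\varepsilon)^d$). You instead prove the unit-scale bound $\mathcal{H}^{\mu,s}_{\infty}(K)\geq c_0$ once — where your observation that any cover ball meeting $G$ of diameter at most $\rho_0$ satisfies $\mu(L_j)\leq |L_j|^{s}$ removes the need for any recentering or Besicovitch constant — and then pull the competitor set $E$ back through $f_{\underline{i}}^{-1}$, using scale-(quasi-)invariance of the Hausdorff content under similarities; the price is the dimensional constant $C_d$ from comparing $\|\cdot\|_\infty$-balls with the Euclidean similarities, which the paper avoids because it rescales the measure's regularity rather than the content. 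Two points of constant bookkeeping (trivial, and of the same nature as slips already present in the paper's own constant): the sup-norm distortion also affects the diameter of $f_{\underline{i}}(K)$, so the minimal $n$ should be chosen with $c_{\underline{i}}\sqrt{d}\,|K|\leq r_0/2$, which only changes $\gamma_1$; and your final $\kappa=\min\{\tfrac12,\,C_d^{-1}(\gamma_1/2)^{s}c_0\}$ does not cover the degenerate case where $\underline{i}$ is empty and $\gamma_1>1$ (possible when $2|K|<\min_i c_i$), so one should also put $c_0 2^{-d}$ into the minimum; with these adjustments the argument is complete.
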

%%%%%%%%%%%%%%%%%%%%%

%%%%%%%%%%%%%%%%%%%%%

%%%%%%%%%%%%%%%%%%%%%
\begin{proof}

Let $\left\{f_1 ,...,f_m\right\}$ the underlying IFS. Denote   by $c_i$ the contraction ration of $f_i$,  and $(p_1 ,...,p_m)$ the probability vector with positive entries associated with $\mu$ so that \eqref{def-ssmu2} is satisfied. 
Set $\alpha=\dim(\mu)$ and  $\Lambda=\left\{1,...,m\right\}$. 
For  $k\geq 0$ and $\underline{i}:=(i_1 ,...,i_k)\in\Lambda^k$, define  
\begin{itemize}
\item $c_{\underline{i}}=c_{i_1}...c_{i_k}$, $f_{\underline{i}}=f_{i_1}\circ ... \circ f_{i_k}$ and $K_{\underline{i}}=f_{\underline{i}}(K),$ so that  $\vert K_{\underline{i}}\vert = c_{\underline{i}}\vert K\vert$. \medskip
\item $\Lambda ^{(k)}=\left\{\underline{i}:=(i_i ,..., i_s) \ : \ c_{i_s}2^{-k}< c_{\underline{i}}\leq 2^{-k}  \right\}$.
\end{itemize}

Note first that item (5) of Proposition \ref{propmuc} implies that for any $s>\dim(\mu)$,  $\mathcal{H}^{\mu,s}_{\infty}(B)=0.$

Let us consider $0\leq s<\dim_H (\mu)$ and start by few remarks.

 Recalling \eqref{emu} and Proposition  \ref{mle}, let us fix $\rho_{\varepsilon}$ so that $\mu(E_{\mu} ^{[\alpha,\alpha], \rho_{\varepsilon},\varepsilon})\geq \frac{1}{2}$ and write $E=E_{\mu}^{[\alpha,\alpha] ,\rho_{\varepsilon}, \varepsilon}.$ 

Set $\Lambda ^{*}:=\bigcup_{k\geq 0}\Lambda ^k $, and for $\underline{i}\in\Lambda^*$, define $E_{\underline{i}}=f_{\underline{i}}(E)$ and $\mu_{\underline{i}}=\mu(f_{\underline{i}}^{-1})$. One has
\begin{align}
\nonumber E_{\underline{i}}
\nonumber &=\left\{f_{\underline{i}}(x)\in \R^d : \, x\in K \forall\, r\leq \rho_{\varepsilon}, \  \mu(B(x,r))\leq r^{\alpha -\varepsilon}\right\}\\
\nonumber &=\left\{f_{\underline{i}}(x):\, x\in K, \, \forall, c_{\underline{i}} r\leq c_{\underline{i}} \rho_{\varepsilon}, \  \mu(f_{\underline{i}} ^{-1}(B(f_{\underline{i}}(x),r c_{\underline{i}})))\leq \left(\frac{r c_{\underline{i}}}{c_{\underline{i}}}\right)^{\alpha -\varepsilon}\right\}\\
\label{eq77} &=\left\{y\in K_{\underline{i}} : \, \forall r'\leq c_{\underline{i}}\rho_{\varepsilon} , \  \mu_{\underline{i}}(B(y,r'))\leq \left(\frac{r'}{c_{\underline{i}}}\right)^{\alpha -\varepsilon}\right\},
\end{align}
 Also,  
$\mu_{\underline{i}}(E_{\underline{i}})=\mu(E)\geq \frac{1}{2}.$

One emphasizes that iterating the self-similarity equation gives 
$$\mu=\sum_{\underline{i}^{\prime}\in\Lambda ^k}p_{\underline{i}^{\prime}}\mu_{\underline{i}^{\prime}}, $$
which implies that $\mu_{\underline{i}}$ is absolutely continuous with respect to $\mu$ (since all  $p_{\underline{i}}$'s are strictly positive).  

\sk

We are now ready to estimate the $\mu$-essential content of a ball $B$ centered on $K$.

\sk

Let $B=B(x,r)$ with $x\in K$ and $r\leq \min_{1\leq i\leq m}c_i$.

\sk

Since $x\in K$, there exists an $\underline{i }$ such that $\min_{1\leq j\leq m}c_j r\leq c_{\underline{i}}\vert K\vert\leq r$ and $K_{\underline{i}}\subset \widering{B}$.

%
%
%%%%%%%%%%%%%%%%%%%%%%%%%%%%%%%%%%
%\begin{lemme}
%Let $A\subset \R^d$ be a bounded set, $\mu\in \mathcal{M}(\R^d)$, $t,r>0$. There exists a constant $C(d,t )>0$ such that 
%\begin{align}
%\label{-rad}
%\mathcal{H}^{\mu,s}_{r}(A) & \leq\mathcal{H}^{\mu,s}_{t r }(A)\leq C(d,t)\mathcal{H}^{\mu,s}_{r}(A)\\
%\label{-radmod}
%\mathcal{H}^{\mu,s}_{\infty}(A) & \leq\widetilde{\mathcal{H}}^{\mu,s}_{t}(A)\leq C(d,t)\mathcal{H}^{\mu,s}_{\infty}(A).  
%\end{align}
%\end{lemme}
%%%%%%%%%%%%%%%%%%%%%%%%%%%%%%%%%%
%
%
%There exists a constant $C(d,\lambda )$ depending on $0<\lambda <1$ and the dimension $d$ such that any ball $B$ of radius $r$ can be covered by $C(d,\lambda )$ balls of radius $tr$. This implies the following series of inequality
%\g{\begin{equation}
%\label{-radmod}
%\mathcal{H}^{\mu,s}_{\infty}(A)\leq\widetilde{\mathcal{H}}^{\mu,s}_{t}(A)\leq C(d,\lambda)\mathcal{H}^{\mu,s}_{\infty}(A).  
%\end{equation}}
%\g{This will be used later on.}
%
%
%{Observe first that,  given a set $A$ and a ball $B$ of radius $r$, $B\cap A$ can be covered by at most $2^d$ balls centered on $A$ of same radius $r$, the last inequality remains valid if one chooses the slightly modified version of $\mathcal{H}^{\mu,s}_{t}$, $\widetilde{\mathcal{H}}_{t}^{\mu,s}$, for which  $\widetilde{\mathcal{H}}_{t}^{\mu,s}(A)$ is computed with balls centered on $A$. More precisely, there exists $\widetilde{C}(d,t)$ such that} 
%
By construction, $E_{\underline{i}} \subset \widering{B}$.
\sk

Consider a Borel set $A\subset B$ such that $\mu(A)=\mu(B).$ One aims at giving a lower-bound of the Hausdorff content of $A$ which does not depends on $A$. 

\sk

 Consider a sequence of balls  $(L_n=B( z_n,\ell_n))_{n\geq 1}$ covering $A\cap E_{\underline{i}} $, such that  $\ell_{n}<\rho_{\varepsilon} c_{\underline{i}}$ and $ z_n\in A\cap E_{\underline{i}} $.  
Since $\mu_{\underline{i}}$ is absolutely continuous with respect to $\mu$, it holds that $\mu_{\underline{i}}(A)=1.$ 

By \eqref{eq77} applied to every $n\in\mathbb{N}$ , one has $\left(\frac{\vert L_n \vert}{c_{\underline{i}}}\right)^{\alpha-\varepsilon}\geq \mu_{\underline{i}}(L_n)$, so that
\begin{equation}
\label{eq78}
\sum_{n\in\mathbb{N}}| L_n| ^{\alpha-\varepsilon}\  \geq \sum_{n\in\mathbb{N}}c_{\underline{i}} ^{\alpha -\varepsilon}\mu_{\underline{i}}(L _n )\geq c_{\underline{i}} ^{\alpha -\varepsilon}\mu_{\underline{i}}\left(\bigcup_{n\in \N }L_n \right)\geq c_{\underline{i}} ^{\alpha -\varepsilon}\mu_{\underline{i}}(E_{\underline{i}})\geq \frac{1}{2}c_{\underline{i}} ^{\alpha -\varepsilon}.
\end{equation}
This series of inequalities holds for any sequence of balls $(L_n)_{n\in\mathbb{N}}$ with radius less than $\rho_{\varepsilon} c_{\underline{i}}$  centered on $A\cap E_{\underline{i}}$ and covering $A\cap E_{\underline{i}}$.

Now, assume that $(L_n)_{n\in\mathbb{N}}$ is a sequence of balls  covering $A\cap E_{\underline{i}} $, which still verifies  $\ell_{n}<\rho_{\varepsilon} c_{\underline{i}}$ but  $z_n$ does not necessarily belongs to $A\cap E_{\underline{i}} $. 

Let $n\in\mathbb{N}$. One constructs recursively a sequence of balls $(L_{n,j})_{1\leq j\leq J_n}$ \ such that the following properties hold for any $1\leq j\leq J_n$:

\begin{itemize}
\item[•]  $L_{n,j}$ is centered on $A\cap E_{\underline{i}}\cap L_n$;\mk
\item[•] $A\cap E_{\underline{i}}\cap L_n \subset \bigcup_{1\leq j \leq J_n}L_{n,j}$;\mk
\item[•] for all $1\leq j\leq J_n,$ $\vert L_{n,j}\vert= \vert L_n \vert$;\mk
\item[•]the center of $L_{n,j}$ does not belong to any $L_{n,j^{\prime}}$ for $1\leq j^{\prime}\neq j\leq J_n$. \mk
\end{itemize}

To achieve this, simply consider $y_1\in A\cap E_{\underline{i}}\cap L_n$ and set $L_{1,n}=B(y_1,\ell_n).$ If $A\cap E_{\underline{i}}\cap L_n \nsubseteq L_{1,n}$, consider $y_2 \in A\cap E_{\underline{i}}\cap L_n \setminus L_{1,n}$ and set $L_{2,n}=B(y_2 ,\ell_n)$. If $A\cap E_{\underline{i}}\cap L_n \nsubseteq L_{1,n}\cup L_{2,n}$, consider $y_3 \in A\cap E_{\underline{i}}\cap L_n \setminus  L_{1,n}\cup L_{2,n}$ and set $L_{3,n}=B(y_3 ,\ell_n)$, and so on...

 Note that, for any $1\leq j\leq J_n$, any ball $L_{j,n}$ has radius $\ell_n$, intersects $L_n$ (which also has radius $\ell_n$) and, because $y_j \notin \bigcup_{1 \leq j^{\prime}\neq j\leq J_n}L_{j^{\prime},n}$, it holds that, for any $j\neq j^{\prime}$, $\frac{1}{3}L_{n,j}\cap \frac{1}{3}L_{n,j^{\prime}}=\emptyset$. By Lemma \ref{dimconst}, this implies that $J_n \leq Q_{d, \frac{1}{3}}.$

Hence, denoting by $(\widetilde L_n)_{n\in \N} $ the collection of the corresponding balls centered on $A\cap E_{\underline{i}} $ associated with all the balls $L_n$, one has by \eqref{eq78} applied to $(\widetilde L_n)_{n\in\mathbb N}$:
$$\sum_{n\in\mathbb{N}}|L_n| ^{\alpha-\varepsilon}\geq \frac{1}{Q_{d,\frac{1}{3}}} \sum_{n\in\mathbb{N}}|\widetilde L_n| ^{\alpha-\varepsilon} \geq \frac{1}{2Q_{d,\frac{1}{3}}} c_{\underline{i}} ^{\alpha -\varepsilon}.$$
Remark also that any ball of radius smaller that $r$ can be covered by at most $(\frac{2}{\rho_{\varepsilon}})^d$ balls of radius $r\rho_{\varepsilon}$.  

This proves that, for any sequence of balls $\widehat{L}_n$ covering $A\cap E_{\underline{i}}$, since $c_{\underline{i}}\geq \frac{\min_{1\leq j\leq m}c_j }{\vert K\vert }\vert B\vert$, it holds that 

\begin{equation}
\label{loceq}
\sum_{n\in\mathbb{N}}|\widehat{L}_n| ^{\alpha-\varepsilon} \geq \frac{\rho_{\varepsilon}^d}{2^{d+1}Q_{d,\frac{1}{3}}} c_{\underline{i}} ^{\alpha -\varepsilon}\geq \frac{\min_{1\leq j\leq m}c_j ^{\alpha-\varepsilon}\rho_{\varepsilon}^d}{\vert K\vert^{\alpha-\varepsilon} 2^{d+1}Q_{d,\frac{1}{3}}} \vert B\vert ^{\alpha -\varepsilon}.
\end{equation} 
Recall \eqref{hcont}. Since \eqref{loceq} is valid for any covering $(\widehat{L}_n )_{n\in\mathbb{N}}$ of $A\cap E_{\underline{i}}$, one has
\begin{equation}
\vert B\vert^{\alpha-\varepsilon}\geq \mathcal{H}^{\alpha-\varepsilon}_{\infty}(A)\geq \mathcal{H}^{\alpha-\varepsilon}_{\infty}(A \cap E_{\underline{i}})\geq \frac{\min_{1\leq j\leq m}c_j ^{\alpha-\varepsilon}\rho_{\varepsilon}^d}{\vert K\vert^{\alpha-\varepsilon}2^{d+1}Q_{d,\frac{1}{3}}} \vert B\vert ^{\alpha -\varepsilon}.
\end{equation}  
Taking the infimum over all the Borel sets $A\subset B$ satisfying $\mu(A)=\mu(B)$, one gets
$$\vert B\vert^{\alpha-\varepsilon}\geq \mathcal{H}^{\mu,\alpha-\varepsilon}_{\infty}(B)\geq \frac{\min_{1\leq j\leq m}c_j ^{\alpha-\varepsilon}\rho_{\varepsilon}^d}{\vert K\vert^{\alpha-\varepsilon} 2^{d+1}Q_{d,\frac{1}{3}}} \vert B\vert ^{\alpha -\varepsilon}. $$
The results stands for balls of diameter less than $\min_{1\leq j \leq m}c_j $. Then for any ball $B$ centered on $K$ with $\vert B\vert\leq 1$, remarking that
$$\vert B\vert^{\alpha-\varepsilon}\geq \mathcal{H}^{\mu,\alpha-\varepsilon}_{\infty}(B)\geq \mathcal{H}^{\mu,\alpha-\varepsilon}_{\infty}(\min_{1\leq j\leq m}c_j B)$$ 
and setting $\kappa(d, \mu, \varepsilon)= \frac{\min_{1\leq j\leq m}c_j ^{2(\alpha-\varepsilon)}\rho_{\varepsilon}^d}{\vert K\vert^{\alpha-\varepsilon} 2^{d+1}Q_{d,\frac{1}{3}}}$ yields the desired inequality.
\end{proof}

\begin{remark}
Note that in the proof of Proposition \ref{autosim2}, the estimate of $\mathcal{H}^{\mu,s}_{\infty}(B)$ for $s<\dim (\mu)$ only relies on the absolute continuity of $\mu(f_{\underline{i}}^{-1}(\cdot))$, for any $\underline{i}\in\Lambda^*$. In particular, the same estimates holds for any quasi-Bernoulli measures (which are proved to be exact-dimensional, see \cite{H}). 
\end{remark}

This result in hand, one establishes the more general Theorem \ref{contss}.

 \begin{proof}[Proof of Theorem \ref{contss}]
Note first, that by item $(5)$ of Proposition \ref{propmuc}, for any $s>\dim(\mu)$ and any set $E$, one has $ \mathcal{H}^{\mu,s}_{\infty}(E)=0.$ 
 \sk
 
Let us fix $s<\dim (\mu)$ and set $\varepsilon=\dim(\mu)-s >0.$ Since $K\cap \Omega \subset \Omega$ and $\mu(K\cap \Omega)=\mu(\Omega)$, it holds that 
$$\mathcal{H}^{\mu,s}_{\infty}(\Omega)\leq \mathcal{H}^{s}_{\infty}(\Omega \cap K).$$ 
 
 It remains to show that there exists a constant $c( d,\mu,s)$ such that for any open set $\Omega$, the converse inequality
 $$c(d,\mu,s)\mathcal{H}^{s}_{\infty}(\Omega \cap K)\leq \mathcal{H}^{\mu,s}_{\infty}(\Omega)$$
holds.
 \sk
 
 Let  $E \subset \Omega$ be a Borel set such that $\mu(E)=\mu(\Omega)$ and  
 \begin{equation}
 \label{equat0}
 \mathcal{H}^{s}_{\infty}(E)\leq 2\mathcal{H}^{\mu,s}_{\infty}(\Omega).
 \end{equation}
  Let $\left\{L_n \right\}_{n\in\mathbb{N}}$ be a covering of $E$ by balls verifying
\begin{equation}
\label{equat1}
\mathcal{H}^{s}_{\infty}(E)\leq\sum_{n\geq 0}\vert L_n \vert^s \leq 2\mathcal{H}^{s}_{\infty}(E).
\end{equation}
 The covering $(L_n)_{n\in\mathbb{N}}$ will be modified to get  a covering $(\widetilde{L}_n)_{n\in\mathbb{N}}$ which verifies the following properties:

$\bullet$ $K\cap \Omega\subset \bigcup_{n\in\mathbb{N}}\widetilde{L}_n$;

\mk
$\bullet$ $\bigcup_{n\in\mathbb{N}}L_n \subset \bigcup_{n\in\mathbb{N}}\widetilde{L}_n$; 

\mk

$\bullet $ one has  $$\sum_{n\geq 0}\vert \widetilde{L}_n \vert^s \leq 8\cdot 2^s\frac{Q_{d,1}^2}{\kappa( d,\mu,\varepsilon)}\sum_{n\geq 0}\vert L_n \vert ^s,
$$
where $\kappa( d,\mu,\varepsilon)$ is the constant introduced in Proposition \ref{autosim2} and $Q_{d,1}$ is the constant arising in Proposition \ref{densibesi} applied with $v=1.$ Last item together with  \eqref{equat0} and \eqref{equat1} then immediately imply that
  $$\frac{\kappa( d,\mu,\varepsilon)}{8\cdot 2^s Q_{d,1}^2}\mathcal{H}^{s}_{\infty}(K\cap \Omega)\leq \mathcal{H}^{\mu,s}_{\infty}(\Omega),$$
and setting $c( d,\mu,s)=\frac{\kappa( d,\mu,\dim(\mu)-s)}{ 8\cdot2^s Q_{d,1}^2 }$ concludes the proof.

\mk

Let us start the construction of the sequence of balls $(\widetilde{L}_n)_{n\in\mathbb{N}}$.  Let  $X=( K \setminus \bigcup_{n\in\mathbb{N}}L_n )\cap \Omega$. For every $x\in X$, fix $ 0< r_x \leq 1$ such that $B(x,r_x)\subset \Omega$.   One of the following alternatives must occur:
\medskip
\begin{enumerate}
\item for any ball $L_n$ such that $L_n \cap B(x,r_x)\neq \emptyset$, it holds that $\vert L_n \vert \leq r_x$, or \mk 
\item there exists $n_x \in\mathbb{N}$ such that $L_{n_x}\cap B(x,r_x)\neq \emptyset$ and $\vert L_{n_x}\vert \geq r_x$.
\end{enumerate} 

Consider the set $S_1$ be the set of points in $X$ for which the first alternative holds. By Lemma~\ref{besimodi} applied with $v=1$, it is possible to extract from the covering of $S_1$, $\left\{B(x,r_x),x\in S_1\right\}$, $Q_{d,1}$ families of pairwise disjoint balls, $\mathcal{F}_1 ,...,\mathcal{F}_{Q_{d,1}}$, such that
$$S_1 \subset \bigcup_{1\leq i\leq Q_{d,1}}\bigcup_{L\in\mathcal{F}_i}L.$$
 Now, any ball $L_n$ intersecting a ball $L\in \bigcup_{1\leq i\leq Q_{d,1}}\mathcal{F}_i$ must satisfy $\vert L_n \vert \leq |L|.$ In particular, since for any $1\leq i\leq Q_{d,1}$ the balls of $\mathcal{F}_i$ are pairwise disjoint, applying Lemma \ref{dimconst} to the balls of $\mathcal{F}_i$ intersecting $L_n$ we get that $L_n$ intersects at most $Q_{d,1}$ balls of $\mathcal{F}_i$, hence at most $Q_{d,1}^2$ balls of  $\bigcup_{1\leq i\leq Q_{d,1}}\mathcal{F}_i$. 

\sk
 
Let $L \in \bigcup_{1\leq i\leq Q_{d,1}}\mathcal{F}_i$. One aims at replacing all the balls $L_n$ intersecting $L$ by the ball $2L$.

%Problem is, each ball $B_n$ could intersect multiple such ball $L$. A convenient way to proceed is to add multiple copies of each ball $B_n $ so that it is possible, considering each ball $L$, to replace one copy intersecting $L$ at the time.
% 
% One first changes the sequence of $(B_n)_{n\in\mathbb{N}}$ by adding to this sequence the set $\bigcup_{L\in\mathcal{F}_i}\left\{B_n : B_n \cap L\right\}$. 
 
 For any $1\leq i \leq Q_{d,1}$ and any ball $L\in\mathcal{F}_i$, denote by $\mathcal{G}_L$ the set of balls $L_n$ intersecting $L$. Since $E\subset\bigcup_{n\in\mathbb N} L_n$ and $\mu(E)=\mu(\Omega)$, one has  $E\cap L\subset \bigcup_{B\in \mathcal{G}_L}B$ and $\mu(E\cap L)=\mu(L)$. By Definition \ref{mucont} and Proposition \ref{autosim2},  this implies that
 \begin{equation}
 \label{equat2}
\kappa( d,\mu,\varepsilon)\vert L\vert^s \leq\mathcal{H}^{\mu,s}_{\infty}(L)\leq \sum_{B\in\mathcal{G}_L}\mathcal{H}^{\mu,s}_{\infty}(B)\leq \sum_{B\in\mathcal{G}_L}\vert B\vert^s.
 \end{equation}
 Replace the balls of $\mathcal{G}_L$ by the ball $\widehat L=2L$ (recall that $\bigcup_{B\in\mathcal{G}_{L}}B \subset 2L$). The new sequence of balls so obtained by the previous construction applied to all the balls $L\in\bigcup_{\leq i\leq Q_{d,1}}\mathcal{F}_i$ is denoted by $(\widehat{L}_k)_{1\le k \le K}$, where $0\le K\le+\infty$. 

It follows from the construction and \eqref{equat2} that $S_1 \subset \bigcup_{1\le k\le K}\widehat{L}_k$ and 
\begin{equation}
\label{equat3}
\sum_{1\le k\le K}\Big (\frac{\vert \widehat{L}_k \vert}{2}\Big)^s \leq \frac{Q_{d,1}^2}{\kappa(d,\mu,\varepsilon)}\sum_{n\geq 0}\vert L_n \vert^s.
\end{equation}

On the other hand, since for any $x\in S_2 = X\setminus S_1$, there exists $n_x\in\mathbb{N}$ such that ${L}_{n_x}\cap B(x,r_x)\neq \emptyset$ and $r_x \leq \vert {L}_{n_x}\vert$, one has $S_2\subset \bigcup_{n\in\mathbb{N}}2{L}_n$, so that 
$$
 \Big (\bigcup_{n\in\mathbb N}L_n\Big )\cup  \Big (K\cap \Omega \setminus \bigcup_{n\in\mathbb N}L_n\Big ) \subset \Big (\bigcup_{1\le k\le K}\widehat L_k\Big )\cup \Big ( \bigcup_{n\in\mathbb N}2{L}_n\Big ) .
 $$
Putting the elements of $(\widehat{L}_k)_{1\le k \le K}$ and $(2L_n)_{n\ge 0}$ in a single sequence $(\widehat{ L}_n)_{n\ge 0}$, writing $(\widetilde{L}_n :=2\widehat{L}_n)_{n\in\mathbb{N}}$, by construction, $K\cap \Omega \subset \bigcup_{n\in\mathbb N}\widetilde{L}_n$ and due to \eqref{equat3}:
$$\mathcal{H}^ {s}_{\infty}(K\cap \Omega)\leq\sum_{n\in\mathbb N}\vert \widetilde{L}_n \vert^s \leq 2^s\Big (\frac{Q_{d,1}^2}{\kappa(d,\mu,\varepsilon)}+1\Big )\sum_{n\in\mathbb N}\vert L_n \vert^s \leq 8\cdot2^s \frac{Q_{d,1}^2}{\kappa( d,\mu,\varepsilon)}\mathcal{H}^{\mu,s}_{\infty}(\Omega) .$$
%
%Setting 
%$c( d,\mu,s)=\frac{\kappa( d,\mu,\dim(\mu)-s)}{ 8\cdot2^s Q_{d,1}^2 }$ concludes the proof.}
 \end{proof}
\begin{remark}
The proof of Theorem \ref{contss} only uses Proposition \ref{prop-ss}. In particular, Theorem \ref{contss} holds for any measure $\mu \in \mathcal{M}(\mathbb{R}^d)$ supported on $K$ and verifying, for any $\underline{i}\in\Lambda^{*}$, $\mu(f_{\underline{i}}^{-1}(\cdot))$ is absolutely continuous with respect to $\mu$. 
\end{remark}

 \section{Applications of Theorem \ref{lowani}}
 \label{sec-example}
 \subsection{Ubiquity Theorems for self-similar measures}

%\begin{definition}
%\label{secr}
%Let $\mu\in\mathcal{M}(\R^d).$ One says that $\mu$ is strongly essential content regular (s.e.c.r) if for any $\varepsilon>0$, there exists $r_{\varepsilon}>0$ such that for any ball $B$ centered in $\supp(\mu)$ of radius $r$ with $r\leq r_{\varepsilon}$, one has 
%$\mathcal{H}^{\mu,\dimm(\mu)}_{\infty}(B)\geq r^{\dimm(\mu)+\varepsilon}.$ 
%\end{definition}

%\label{sec-example} 
%One introduces the following definition
%\begin{definition}
%Let $\mu \in\mathcal{M}(\R^d)$. 
%For any $s\geq 0$ and $x\in \R^d$, one  sets 
%$$\Phi_s(x)=\limsup_{r\to 0^+}\frac{\log(\mathcal{H}_{\infty}^{\mu,s}(B(x,r))}{\log(r)}.$$

%The essential spectrum of $\mu$, denoted by $\Upsilon_{\mu}:\mathbb{R}^+ \mapsto \mathbb{R}^+$, is defined by, 
%\begin{equation}
%\label{essespect}
%\Upsilon_{\mu}(s)=supess_{\mu}(\Phi_s).
%\end{equation}

%\end{definition}
%One introduce also the following definition
%\begin{definition}
%Let $\mu \in\mathcal{M}(\R^d)$. The measure $\mu$ is said to be essential content regular (one will write e.c.r) if $\Upsilon_{\mu}(\dimm(\mu))=\dimm(\mu).$
%\end{definition}

%%%%%%%%%%%%%%%%%%%%%%%%%%%%%%%%%
%%%%%%%%%%%%%%%%%%%%%%%%%%%%%%%%%
%%%%%%%%%%%%%%%%%%%%%%%%%%%%%%%%%

%%%%%%%%%%%%%%%%%%%%%

%%%%%%%%%%%%%%%%%%%%%%%%%%%%%%%%%
%%%%%%%%%%%%%%%%%%%%%%%%%%%%%%%%%
%%%%%%%%%%%%%%%%%%%%%%%%%%%%%%%%%
%%%%%%%%%%%%%%%%%%%%%%%%%%%%%%%%%
%%%%%%%%%%%%%%%%%%%%%%%%%%%%%%%%%
%%%%%%%%%%%%%%%%%%%%%%%%%%%%%%%%%
 \subsubsection{Proof of Theorem \ref{prop-ss}}

%%%%%%%%%%%%%%%%%%%%%%%%%%%%%%%%%

\mk

Let $\mu$ be a self-similar measure with support $K$, and 
set $\alpha=\dim(\mu).$ Let $(B_{n}:=B(x_n,r_n))_{n\in\mathbb{N}}$ be a sequence of balls such that  $x_n\in K$ for all $n\in\mathbb N$, $\lim_{n\to+\infty} r_{n} = 0$  and $\mu(\limsup_{n\rightarrow+\infty}B_n)=1.$  

Fix  $\varepsilon>0$, $v>1$ and $\delta \geq 1$ and set $\mathcal{B}_v =\left\{vB_n\right\}_{n\in\mathbb{N}}$. 
Lemma~\ref{equiac} shows that $\mathcal{B}_v$ is $\mu$-a.c. Then, by Proposition \ref{autosim2}, for $n$ large enough, one  has 
\begin{align*}
 \mathcal{H}_{\infty}^{\mu,\alpha-\varepsilon}(\widering{(v B_{n} ) ^{\delta}})\ge\kappa(d,\mu,\varepsilon)(vr_{n} )^{\delta(\alpha-\varepsilon)}\geq (v r_{n} )^{\delta(\alpha-\frac{\varepsilon}{2})}.
\end{align*}
Consequently, $$t(\mu,\delta,\varepsilon,\mathcal B_v)=\limsup_{n\rightarrow+\infty}\frac{\log \mathcal{H}_\infty^{\mu,\alpha-\varepsilon}(\widering{(v B_{n} )^{\delta}})}{\delta\log \vert v B_{n} \vert}\leq \alpha-\frac{\varepsilon}{2}$$
so $t(\mu,\delta,\varepsilon,\mathcal B_v)\leq \alpha.$
Due to Corollary~\ref{minoeffec}, one concludes that
$$ \dim_{H}(\limsup_{n\rightarrow+\infty}(vB_n) ^{\delta})\geq \frac{\alpha}{\delta}.$$ 
But for any $\ep'>0$, $\limsup_{n\rightarrow+\infty}(v B_n)^{\delta}\subset\limsup_{n\rightarrow+\infty}B_n ^{\delta -\ep'}$, so that 
$$\dim_{H}(\limsup_{n\rightarrow+\infty}B_n ^{\delta -\ep'})\geq \frac{\alpha}{\delta}.$$
It follows that for any $\ep'>0$ and $\delta\geq 1$ one has 
$$ \dim_{H}(\limsup_{n\rightarrow+\infty}B_n ^{\delta})\geq \frac{\alpha}{\delta+\ep'}.$$
Letting $\ep'\to 0$ proves that $\dim_{H}(\limsup_{n\rightarrow+\infty}B_n ^{\delta})\geq \frac{\dim(\mu)}{\delta}$, hence the result.

\begin{remarque}
\label{remarkballopti}
If the sequence of balls $(B_n)_{n\in\mathbb{N}}$ is not assumed to be $\mu$-a.c, but only to verify $\mu(\limsup_{n\rightarrow+\infty}B_n)=1$, then the same lower-bound estimate  holds for $\dim_H (\limsup_{n\to\infty} B_n^\delta)$, but the existence of a gauge function as in Theorem \ref{lowani} does not hold in general.
\item[•] Let us also notice that the computation in the proof of Theorem \ref{prop-ss} actually shows that, under the assumption that $\lim_{n\to+\infty}\frac{\log \mu(B_n)}{\log \vert B_n \vert}=\dim (\mu)$, it holds that, for $n$ large enough, $\mathcal{H}^{\mu,s}_{\infty}(B_n ^{\delta})\geq \mu(B_n) \Leftrightarrow s<\frac{\dim (\mu)}{\delta}.$
\end{remarque}

 \subsubsection{Proof of Theorem \ref{rectss}}

Given $ \tau_1 =1\leq \tau_2  \leq ...\leq \tau_{d}$ and $s\ge 0$,  set $\tau=(\tau_1,\dots,\tau_{d})$ and 
$$
g_{\tau }(s)=\max_{1\leq k\leq d}\left\{s\tau_{k}-\sum_{1\leq i\leq k}\tau_k-\tau_i\right\}.
$$

We will need the following lemma (one refers to \cite{KR}, Proposition 2.1 for the proof, although it is stated in terms of singular values functions).

 \begin{lemme}
 \label{rectc}
 Let $ \tau_1 =1\leq \tau_2  \leq ...\leq \tau_{d}$. 

 The are two positive constants $C_1$ and $C_2$ depending on $d$ only such that for all $s\ge 0$, $r>0$ and $x\in\mathbb{R}^d$ one has 
 $$
 C_1 r^{g_{\tau}(s)}\leq\mathcal{H}^{s}_{\infty}(R_{\tau}(x,r))=\mathcal{H}^{s}_{\infty}(\widering R_{\tau}(x,r))\leq C_2 r^{g_{\tau}(s)}.
 $$
 \end{lemme}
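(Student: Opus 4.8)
I would prove the two–sided estimate in the range $0\le s\le d$, $0<r\le 1$, which is the only one used in the proof of Theorem~\ref{rectss}, and treat the equality of the contents of the open and closed rectangle at the very end by a separate soft argument. Throughout, write $a_i=r^{\tau_i}$, so that $R_\tau(x,r)$ has side lengths $r=a_1\ge a_2\ge\cdots\ge a_d>0$, and recall that in the $\|\cdot\|_\infty$ metric a closed ball of radius $\rho$ is a cube of side length (and diameter) $2\rho$. For $1\le k\le d$ put $A_k(s)=s\tau_k-\sum_{i=1}^k(\tau_k-\tau_i)=(s-k+1)\tau_k+\sum_{i=1}^{k-1}\tau_i$, so that $g_\tau(s)=\max_{1\le k\le d}A_k(s)$, and note $A_1(s)=s\tau_1=s$ since $\tau_1=1$.

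For the upper bound, for each fixed $k$ I would cover $R_\tau(x,r)$ by cubes of side $a_k$: one cube suffices in each of the $d-k+1$ directions $i\ge k$ (because $a_i\le a_k$ there), whereas $\lceil a_i/a_k\rceil\le 2a_i/a_k$ cubes are needed in each direction $i<k$, for a total cost at most $2^{k-1}\big(\prod_{i<k}a_i/a_k\big)a_k^s=2^{k-1}r^{A_k(s)}$. Optimising over $k$ and using $0<r\le1$ gives $\mathcal H^s_\infty(R_\tau(x,r))\le 2^{d-1}r^{g_\tau(s)}$, hence a fortiori the same bound for $\widering R_\tau(x,r)$.

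For the lower bound — the substantial step — I would let $\nu$ be normalised Lebesgue measure on $\widering R_\tau(x,r)$, that is $\nu(\cdot)=\mathcal L^d(\cdot)/\prod_i a_i$, and check the Frostman estimate $\nu(B)\le |B|^s\,r^{-g_\tau(s)}$ for every closed ball $B$; the (easy direction of the) mass distribution principle then yields $\mathcal H^s_\infty(\widering R_\tau(x,r))\ge r^{g_\tau(s)}$. For a cube of side $\sigma=r^{u}$ one has $\nu(\text{cube})\le\prod_i\min(\sigma/a_i,1)$, which for $a_k\le\sigma\le a_{k-1}$ is at most $\sigma^{k-1}/\prod_{i<k}a_i$; after taking logarithms in base $r$, the desired inequality reduces to $\psi_k(u):=\sum_{i=1}^{k-1}\tau_i+(s-k+1)u\le g_\tau(s)$ for $u\in[\tau_{k-1},\tau_k]$. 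Since $u\mapsto\psi_k(u)$ is affine with $\psi_k(\tau_{k-1})=A_{k-1}(s)$ and $\psi_k(\tau_k)=A_k(s)$, it attains its maximum on $[\tau_{k-1},\tau_k]$ at an endpoint, hence is $\le\max_j A_j(s)=g_\tau(s)$. The two extreme regimes are handled the same way: for $\sigma\ge a_1=r$ one has $\nu(\text{cube})\le1\le(\sigma/r)^s\le\sigma^sr^{-g_\tau(s)}$ because $g_\tau(s)\ge A_1(s)=s$; and for $\sigma\le a_d$ one uses $s\le d$, which makes the relevant affine function $u\mapsto\sum_{i=1}^d\tau_i+(s-d)u$ non-increasing on $[\tau_d,+\infty)$, with value $A_d(s)$ at $u=\tau_d$. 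I expect this bookkeeping — tracking which directions are saturated by the cube and checking the monotonicity of the corresponding affine functions — to be the only genuine obstacle, and it is precisely the content of \cite[Proposition~2.1]{KR}, to which one may simply refer.

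Finally, for the equality $\mathcal H^s_\infty(R_\tau(x,r))=\mathcal H^s_\infty(\widering R_\tau(x,r))$: the inequality ``$\ge$'' is trivial, and for ``$\le$'' I would fix $\delta\in(0,1)$ and use the homothety $h_\delta(z)=c+(1-\delta)^{-1}(z-c)$ about the centre $c$ of the rectangle. Since $h_\delta^{-1}$ shrinks each coordinate deviation from $c$ by the factor $1-\delta<1$, one has $R_\tau(x,r)\subset h_\delta(\widering R_\tau(x,r))$; hence any covering of $\widering R_\tau(x,r)$ by closed balls $(B_n)$ is carried by $h_\delta$ to a covering of $R_\tau(x,r)$ by closed balls of diameters $|B_n|/(1-\delta)$, giving $\mathcal H^s_\infty(R_\tau(x,r))\le(1-\delta)^{-s}\,\mathcal H^s_\infty(\widering R_\tau(x,r))$; letting $\delta\to0$ finishes the proof, with $C_1=1$ and $C_2=2^{d-1}$.
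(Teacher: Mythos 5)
Your proposal is correct, but it follows a genuinely different route from the paper, which gives no proof at all of Lemma \ref{rectc}: the paper simply defers to \cite{KR}, Proposition 2.1 (stated there in terms of singular value functions of affine images of balls), whereas you supply a self-contained argument. Your three ingredients are all sound: the upper bound by tiling $R_\tau(x,r)$ with cubes of side $r^{\tau_k}$ and optimising over $k$ (cost $2^{k-1}r^{A_k(s)}$, hence $C_2=2^{d-1}$ for $r\le 1$); the lower bound by the mass distribution principle applied to normalised Lebesgue measure on the rectangle, where the Frostman estimate $\nu(B)\le |B|^s r^{-g_\tau(s)}$ reduces, after writing $|B|=r^u$, to the inequality $\psi_k(u)\le g_\tau(s)$ for a family of affine functions whose endpoint values are exactly the $A_k(s)$ — your endpoint computation $\psi_k(\tau_{k-1})=A_{k-1}(s)$, $\psi_k(\tau_k)=A_k(s)$ is right, and the two extreme regimes are handled correctly using $g_\tau(s)\ge A_1(s)=s$ and $s\le d$; and the equality $\mathcal{H}^s_\infty(R_\tau(x,r))=\mathcal{H}^s_\infty(\widering R_\tau(x,r))$ via the dilation $h_\delta$, which is a clean way to get the open/closed identity that the citation to \cite{KR} leaves implicit. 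Your restriction to $0\le s\le d$ and $0<r\le 1$ is not a defect: for $s>d$ the $s$-dimensional Hausdorff content of any bounded set vanishes, so the lower bound in the lemma cannot hold as literally stated, and in the only place the lemma is used (the proof of Theorem \ref{rectss}) one has $s<\dim(\mu)\le d$ and $r_n\to 0$, so your range suffices (the degenerate case $r=1$, where $\sigma=r^u$ is unavailable, is trivial since the rectangle is then a cube). In short, what your approach buys is a complete elementary proof with explicit constants $C_1=1$, $C_2=2^{d-1}$, at the cost of a page of bookkeeping that the paper outsources to \cite{KR}.
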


Recall that $K$ is the closure of its interior, and note that since the weights $p_i $ are taken positive in Definition \ref{def-ssmu},  one must have $\mu(\widering{K})>0.$ 

Denote $\widetilde{\mu}=\frac{\mu(\cdot)}{\mu(\widering{K})}$ and $\alpha =\dim (\mu)=\dim (\widetilde{\mu}).$  It is easily verified that the computation made in the proof of Theorem  \ref{contss} implies that, for any open set $\Omega \subset \widering{K}$, there exists a constant $c(d,\mu,s)$ given by Theorem \ref{contss}, so that
\begin{equation}
\label{contmut}
\begin{cases} c(\mu,d,s)\mathcal{H}^s _{\infty}(\Omega)\leq \mathcal{H}^{\tilde{\mu},s}_{\infty}(\Omega)\leq \mathcal{H}^{s}_{\infty}(\Omega)\text{ if } s<\alpha\\
\mathcal{H}^{\tilde{\mu},s}_{\infty}(\Omega)=0\text{ if } s>\alpha.
\end{cases}
\end{equation}

Also, $\widetilde{\mu}$ being absolutely continuous with respect to $\mu$, the sequence $(B_n)_{n\in\mathbb{N}}$ is $\widetilde{\mu}$-a.c. Furthermore, up to a $\widetilde{\mu}$-a.c extraction, we can assume that each ball $(B_n)_{n\in \mathbb{N}}$ is included in $\widering{K}$ (and we will do so). 

%Without loss of generality we assume that \textcolor{red}{$\tau_1>1$} \textcolor{blue}{je ne vois pas o\`u cela sert}\sout{$1<\tau_1\leq...\leq  \tau_d$}. \sout{to get the cases $1\leq\tau_1\leq...\leq  \tau_d$, which will be done.}  \textcolor{red}{The general case follows by countable stability of the Hausdorff dimension.} 

 Let $\varepsilon>0$. Set $\mathcal{R}=\left\{R_n\right\}_{n\geq 0}.$
 By Lemma \ref{gscainf}, up to a $\widetilde{\mu}$-a.c extraction, one can assume that for every $n\in\mathbb{N}$, the ball $B_n$ satisfies 
 $$\widetilde{\mu}(B_n)\leq r_n ^{\alpha-\varepsilon}.$$
 Setting $\tau^{\prime}=(\frac{\tau_i}{\tau_1})_{1\leq i\leq d}$, for all $0\le s<\alpha-\varepsilon$, one has $$g_{\tau^{\prime}}(s)=\max_{1\leq k\leq d}\left\{\frac{s\tau_k -\sum_{1\leq i\leq k}\tau_k-\tau_i}{\tau_1}\right\}.$$
  From equation  \eqref{contmut} and Lemma \ref{rectc}, one deduces that
 \begin{equation}
 C_1 c(d,\mu,s) r_n^{\tau_1 g_{\tau^{\prime}}(s)}\leq \mathcal{H}^{\tilde{\mu},s}_{\infty}(R_n).
 \end{equation}
 In particular, for any $s$ verifying 
\begin{equation}
\label{equab} 
 \tau_1 g_{\tau^{\prime}}(s)\leq \alpha-\frac{\varepsilon}{2},
\end{equation} 
 if $r_n\le 1$ one has
 $$
 C_1 c(d,\mu,s) r_n ^{\alpha-\frac{1}{2}\varepsilon}\leq C_1 c(d,\mu,s) r_n^{ \tau_1 g_{\tau^{\prime}}(s)}\leq \mathcal{H}^{\tilde{\mu},s}_{\infty}(R_n).
 $$
Since $r_n \to 0$, for $n$ large enough,  this yields

 \begin{equation}
 \label{equaa}
 \widetilde{\mu}(B_n)\leq r_n ^{\alpha-\varepsilon}\leq C_1 c(d,\mu,s) r_n^{\tau_1 g_{\tau^{\prime}}(s)}\leq \mathcal{H}^{\tilde{\mu},s}_{\infty}(R_n),
 \end{equation}
hence $(B_n)_{n\in\mathcal{N}_{\tilde\mu}(\mathcal{B},\mathcal{R},s)}$ is  $\widetilde \mu$-a.c., and  $s(\widetilde{\mu}, \mathcal{R},\mathcal{B})\geq   s$.

It remains to note that 
%\begin{align*}
%\tau_1 g_{\tau}(s)\leq \alpha-\frac{\varepsilon}{2}&\Leftrightarrow \max_{1\leq k\leq d}\left\{\frac{s.\tau_k -\sum_{1\leq i\leq k}\tau_k-\tau_i}{\tau_1}\right\}\leq \frac{\alpha-\frac{\varepsilon}{2}}{\tau_1} \Leftrightarrow \begin{cases}s\leq \frac{\alpha-\frac{\varepsilon}{2}}{\tau_1}\\ \text{ and }\\ 1+\frac{\tau_2}{\tau_1}.(s-1)\leq \frac{\alpha-\frac{\varepsilon}{2}}{\tau_1} \end{cases} \\
%&\Leftrightarrow\begin{cases}s\leq \frac{\alpha-\frac{\varepsilon}{2}}{\tau_1}\\ \text{ and }\\ s\leq \frac{\alpha+\tau_2-\tau_1-\frac{\varepsilon}{2}}{\tau_2} \end{cases}\Leftrightarrow s\leq \min\left\{\frac{\alpha-\frac{\varepsilon}{2}}{\tau_1},\frac{\alpha+\tau_2-\tau_1-\frac{\varepsilon}{2}}{\tau_2} \right\}.
%\end{align*}

\begin{align}
\label{shtrou}
 \eqref{equab}&\Leftrightarrow \max_{1\leq k\leq d}\left\{\frac{s\tau_k -\sum_{1\leq i\leq k}\tau_k-\tau_i}{\tau_1}\right\}\leq \frac{\alpha-\frac{\varepsilon}{2}}{\tau_1}\nonumber \\
& \Leftrightarrow \forall 1\leq k\leq d, \ \frac{s\tau_k -\sum_{1\leq i\leq k}\tau_k-\tau_i}{\tau_1}\leq  \frac{\alpha-\frac{\varepsilon}{2}}{\tau_1}\nonumber\\
&\Leftrightarrow \forall 1\leq k\leq d, \ s\leq \frac{\alpha-\frac{1}{2}\varepsilon+\sum_{1\leq i\leq k}\tau_k -\tau_i}{\tau_k}\nonumber\\
& \Leftrightarrow s\leq \min_{1\leq k\leq d}\left\{\frac{\alpha-\frac{1}{2}\varepsilon+\sum_{1\leq i\leq k}\tau_k -\tau_i}{\tau_k}\right\}.
\end{align}

Since $\varepsilon>0$ was arbitrary, this implies that $$s(\widetilde{\mu}, \mathcal{R},\mathcal{B})\geq  \min_{1\leq k\leq d}\left\{\frac{\alpha+\sum_{1\leq i\leq k}\tau_k -\tau_i}{\tau_k}\right\},$$ 
and applying Theorem \ref{lowani} gives the desired lower bound estimate.
 
\begin{remarque}
\label{remarkrect}
Note that the estimates made in the proof of Theorem \ref{rectss}, together with Lemma~\ref{rectc}, can be used to show that, under the assumption that $\lim_{n\to\infty}\frac{\log \mu(B_n )}{\log \vert B_n \vert}=\dim (\mu)$, one has the following properties: 

If $s <\min_{1\leq k\leq d}\left\{\frac{\dim (\mu)+\sum_{1\leq i\leq k}\tau_k -\tau_i}{\tau_k}\right\}$ then,  for $n\in\mathbb{N}$ large enough, $\mathcal{H}^{\mu,s}_{\infty}(R_n)\geq \mu (B_n)$. If $s>\min_{1\leq k\leq d}\left\{\frac{\dim (\mu)+\sum_{1\leq i\leq k}\tau_k -\tau_i}{\tau_k}\right\}$ then, for $n$ large enough, $\mathcal{H}^{\mu,s}_{\infty}(R_n)\leq \mu (B_n).$  
\end{remarque}

\subsection{Application to self-similar shrinking targets}

\begin{proof}

In this section, Theorem \ref{shrtag} is proved and we adopt the notation of the proof of Proposition \ref{autosim2}.

Set $s=\dim_H (K)$. Note that, for each $k\in\mathbb{N}$, the set $\left\{B(f_{\underline{i}}(x),2\vert K\vert c_{\underline{i}})\right\}_{\underline{i}\in \Lambda^{k}}$ covers $K.$ In particular, for any measure $\mu$ supported on  $K$, the family $\left\{B(f_{\underline{i}}(x),2\vert K\vert c_{\underline{i}})\right\}_{\underline{i}\in \Sigma^*}$ is  $\mu$-a.c. 

Set $B_{\underline{i}}=B(f_{\underline{i}}(x),2\vert K\vert c_{\underline{i}}).$ One now focuses on proving that, for any $\delta\geq 1$, $\dim_H (\limsup_{\underline{i}\in\Sigma^*}B_{\underline{i}}^{\delta})=\frac{s}{\delta}.$ If this holds, since for any $\varepsilon>0$,
 \begin{equation}
 \label{enca1}
 \limsup_{\underline{i}\in\Sigma^{*}}B_{\underline{i}}^{\delta+\varepsilon}\subset\limsup_{\underline{i}\in \Sigma^*}B(f_{\underline{i}}(x), c_{\underline{i}}^{\delta})\subset \limsup_{\underline{i}\in\Sigma^{*}}B_{\underline{i}}^{\delta},
  \end{equation}
it also holds that $\dim_H (\limsup_{\underline{i}\in \Sigma^*}B(f_{\underline{i}}(x), c_{\underline{i}}^{\delta}))=\frac{s}{\delta}.$

Note that $s$ satisfies the equation $\sum_{1\leq i\leq m}c_i ^s =1.$ Let also be $\nu_s$, the  measure on $(\Sigma, \mathcal{B}(\Sigma))$ associated with the probability vector $(p_i =c_i ^s )_{1\leq i\leq m}$ and $\mu_s$ its projection on $K$ by the canonical coding map. 

Let $0<t<\min_{1\leq i\leq m}c_i$ and 
$$\Lambda ^{\left(k\right)}_t=\left\{\underline{i}=\left(i_1 ,\ldots, i_{\ell}\right) \in\Lambda^* : \ c_{i_{\ell}}t^k< c_{\underline{i}}\leq t^k  \right\}.
$$
 If $\underline{i}\in \Lambda^{(k)}_t$, then for any $\ell \in \Lambda$, the word $\underline{i}\ell \notin \Lambda^{(k)}_t .$ This implies that for any $\underline{i}\neq \underline{j}\in \Lambda^{(k)}_t$, $[\underline{i}]\cap [\underline{j}]=\emptyset.$

Then for any $\delta \geq 1$, one has, for any $\varepsilon>0$, 
\begin{align*}
\sum_{k\geq 0}\sum_{\underline{i}\in\Lambda^{\left(k\right)}_t} \left(\vert B_{\underline{i}}\vert^{\delta}\right)^{\frac{s+\varepsilon}{\delta}}&=\left(4\vert K\vert\right)^{s+\varepsilon}\sum_{k\geq 0}\sum_{\underline{i}\in\Lambda^{\left(k\right)}_t} c_{\underline{i}}^{s+\varepsilon}\\
&\leq \left(4\vert K\vert\right)^{s+\varepsilon}\sum_{k\geq 0}\sum_{\underline{i}\in\Lambda^{\left(k\right)}_t}t^{k\varepsilon}\nu_s\left([\underline{i}]\right).
\end{align*}   
Since $\sum_{\underline{i}\in\Lambda^{\left(k\right)}_t}\nu_s\left(\underline{i}\right)\leq 1$, one obtains
\begin{align*}
\sum_{k\geq 0}\sum_{\underline{i}\in\Lambda^{\left(k\right)}_t} \left(\vert B_{\underline{i}}\vert^{\delta}\right)^{\frac{s+\varepsilon}{\delta}}\leq \left(4\vert K\vert\right)^{s+\varepsilon}\sum_{k\geq 0}t^{k\varepsilon}<+\infty.
\end{align*}
This shows that 
  \begin{equation}
  \label{majoshri}
  \displaystyle \dim_H \Big(\limsup_{\underline{i}\in \Lambda^*}B_{\underline{i}}^{\delta}\Big)\leq \frac{s}{\delta}.
  \end{equation}

One now establishes the lower-bound estimate. By the dimension regularity assumption (see Definition \ref{dimreg}), $\dim_H (\mu_s)=s.$ Since $(B_{\underline{i}})_{\underline{i}\in\Sigma^*}$ is $\mu_s$-a.c,  Theorem~\ref{prop-ss} yields $\dim_H (\limsup_{\underline{i}\in\Sigma^{*}}B_{\underline{i}}^{\delta})\geq \frac{s}{\delta}$.

\end{proof}

\subsection{Study of a problem related to a question of Mahler}

Let us first notice that by Theorem \ref{approxrat}, one has $$\dim_{H}\Big (\limsup_{B\in\mathcal Q}B^\delta\cap K_{1/3}^{(0)}\Big )\leq \min\left\{\frac{1}{\delta},\frac{\log 2}{\log 3}\right\}.$$
In particular, this proves that the expected upper-bound in Theorem \ref{main} stands.

\mk 
 
Before showing that the lower-bound also holds, let us start with some facts and remarks.

\mk

\begin{remarque}
$\bullet$  One has $\mathcal{H}^{\frac{\log 2}{\log 3}}_{\infty}(K_{1/3})>0$ (this is well known and easily follows from the fact that $K_{1/3}$ carries an Alfhors regular measure of dimension $\frac{\log 2}{\log 3}$). 

Moreover, for every $k\in\mathbb{N}$, setting $\mathcal{K}_k =\left\{ f_{\underline{i}}([0,1]) \right\}_{\underline{i}\in\Lambda^k}$, one has 
\begin{equation}
\label{eqegal1}
1=\sum_{I \in\mathcal{K}_k}\vert I\vert^{\frac{\log 2}{\log 3}}.
\end{equation}

\mk

$\bullet$ For every $k\in\mathbb{N},$ let us define 
\begin{equation}
\label{defomega}
\Omega_{k}=\bigcup_{I\in\mathcal{K}_k}\overset{\circ}{I}.
\end{equation}

Since $\mathcal{H}^{\frac{\log2}{\log 3}}_{\infty}(\bigcup_{I\in\mathcal{K}_k}I \setminus \Omega_k )=0$ (it is a finite set of points), it follows from \eqref{eqegal1} that 
\begin{equation}
\label{contom}
C\mathcal{H}^{\frac{\log 2}{\log 3}}_{\infty}(\Omega_{k})\leq\mathcal{H}^{\frac{\log 2}{\log 3}}_{\infty}(K_{1/3})\leq \mathcal{H}^{\frac{\log 2}{\log 3}}_{\infty}\Big (\bigcup_{I\in\mathcal{K}_k}I\Big )=\mathcal{H}^{\frac{\log 2}{\log 3}}_{\infty}(\Omega_{k})\le 1,
\end{equation} 
with  $C=\mathcal{H}^{\frac{\log 2}{ \log 3}}_{\infty}(K_{1/3})>0$. 
\mk

$\bullet$ If $n\in\mathbb{N}$ and $T\in\mathcal{T}_n =\left\{[\frac{k}{3^{n}},\frac{k+1}{3^n}[,0\leq k\leq 3^n-1 \right\}$ is a triadic interval of generation $n$, denote by $F_T$ the canonical homothetical mapping which sends $[0,1]$ to $ \overline{T}.$  For every  $I\in\bigcup_{J\in\mathcal{K}_k}F_T (J)$, for all  $n\leq k^{\prime}\leq n+k$ and all $x=(x_n)_{n\in\mathbb{N}}\in\Sigma$ such that $\pi (x)\in I$, one has 
\begin{equation}
\label{equafre1}
S_{n+k^{\prime}}\phi(x)=S_{n}\phi (x)\times \frac{n}{n+k^{\prime}}.
\end{equation}  

\end{remarque}
 \mk
 
 One are now ready to finish the proof of Theorem \ref{main}.

 \mk
 
Let $(\varepsilon_{q} )_{q\in\mathbb{N}}$ be a positive sequence such that $\lim_{q\to \infty}\varepsilon_{q}= 0.$ One constructs  a family $\{U_{p,q,\delta}\}_{\delta\ge 1,\, q\in\mathbb{N},\, 0\leq p\leq q}$ of open sets as follows: Let $\delta\geq 1$, $q\in\mathbb{N}^{*}$ and $0\leq p\leq q$. Consider $T$ a triadic interval of  generation $n_q=\lfloor \log_{3} (q^{2\delta})\rfloor +1$ included in $B(\frac{p}{q},q^{-2\delta}).$ Let $N_{p,q,\delta}$ be large enough to ensure that for any $x\in\Sigma$ verifying $\pi(x)\in T$, one has
\begin{equation}
\label{equafre2}
S_{n_q}\phi(x)\times \frac{n_q}{n_q+N_{p,q,\delta}}\leq \varepsilon_{q}.
\end{equation}
Set 
\begin{equation}
\label{defu}
U_{p,q,\delta}=F_{T}(\Omega_{N_{p,q,\delta}}).
\end{equation}

By \eqref{equafre1} and \eqref{equafre2}, for all $x\in U_{p,q, \delta}$ one has 
$$
S_{n_q+N_{p,q}}\phi(x)\leq \varepsilon_{q}.
$$ 
This implies that   $\bigcap_{Q\ge 1}\bigcup_{q\ge Q}\bigcup_{0\le   p\le q} U_{p,q, \delta}\subset K_{1/3}^{(0)}\cap \bigcap_{Q\ge 1}\bigcup_{q\ge Q}\bigcup_{0\le   p\le q}B(\frac{p}{q},q^{-2\delta})$. 
Since $U_{p,q,\delta}$ is an homothetic copy of $\Omega_{N_{p,q,\delta}}$ (see \eqref{defu}), by \eqref{contom}, due to the choice of $n_q$ there exists   $\widetilde{C}>0$ independent of $p$, $q$ and $\delta$ such that 
\begin{equation}
\label{hcontc}
\mathcal{H}^{\frac{\log 2}{\log 3}}_{\infty}(U_{p,q, \delta})\geq\widetilde{C} q^{-2\delta \frac{\log(2)}{\log (3)}}.
\end{equation} 
For $1\leq \delta\leq \frac{\log 3}{\log 2}$, it follows that 
\begin{equation}
\label{hcont1}
 \mathcal{H}^{\frac{\log 2}{\log 3}}_{\infty}(U_{p,q, \delta})\geq \widetilde{C} q^{-2}=\mathcal{L} \Big(B\Big(\frac{p}{q},q^{-2}\Big)\Big). 
\end{equation}  
For $\delta \geq \frac{\log 3}{\log 2}$,  by concavity of $x\mapsto x^{\frac{\log 3}{\delta \log 2}},$

\begin{align}
\label{hcont2}
\mathcal{H}^{\frac{1}{\delta}}_{\infty}(U_{p,q, \delta})\geq (\mathcal{H}^{\frac{\log 2}{\log 3}}_{\infty}(U_{p,q, \delta}))^{\frac{\log 3}{\delta \log 2}}\geq \widetilde{C}  (q^{-2\delta \frac{\log 2}{\log 3}})^{ \frac{\log 3}{\delta\log 2}}=\widetilde{C}\mathcal{L} \Big(B\Big(\frac{p}{q},q^{-2}\Big)\Big).
\end{align}

By Theorem \ref{lowani} (or by Rams-Koivusalo's Theorem \ref{lowkr}) applied to $\mathcal{Q}=(B(\frac{p}{q},\frac{1}{q^2}))_{q\in\mathbb{N}^* ,0\leq p\leq q}$, $\mathcal{U}=(U_{p,q,\delta})_{q\in\mathbb{N}^* ,0\leq p\leq q}$ and the Lebesgue measure, one gets
\begin{align*}
&\dim_H \Big (\bigcap_{Q\ge 1}\bigcup_{q\ge Q}\bigcup_{0\le   p\le q} U_{p,q, \delta}\Big )\geq \frac{\log 2}{\log 3} &\text{ if }&1\leq \delta \leq \frac{\log 3}{\log 2}\\
&\dim_H \Big (\bigcap_{Q\ge 1}\bigcup_{q\ge Q}\bigcup_{0\le   p\le q} U_{p,q, \delta}\Big )\geq \frac{1}{\delta}&\text{ if  }&\delta\geq \frac{\log 3}{\log 2}.
\end{align*} 
%Those inequalities concludes the proof.

\bibliographystyle{plain}
\bibliography{bibliogenubi}

\end{document}